\documentclass[11pt]{amsart}
\usepackage[T1]{fontenc}
\usepackage{amsmath,amssymb,mathtools}
\usepackage{esint}
\usepackage{enumerate}
\usepackage{graphicx}
\usepackage{tikz}
\usetikzlibrary{arrows.meta,positioning}
\usepackage[margin=3cm]{geometry}
\usepackage[hidelinks]{hyperref}
\hypersetup{pdftitle={Sharp lower bounds for the Hilbert transform with matrix weights}}

\newtheorem{theorem}{Theorem}[section]
\newtheorem{lemma}[theorem]{Lemma}
\newtheorem{proposition}[theorem]{Proposition}
\newtheorem{corollary}[theorem]{Corollary}
\theoremstyle{definition}

\theoremstyle{remark}
\newtheorem{remark}[theorem]{Remark}
\theoremstyle{definition}

\numberwithin{equation}{section}

\newcommand{\R}{\mathbb R}
\newcommand{\Z}{\mathbb Z}
\newcommand{\one}{\mathbf 1}
\newcommand{\op}{\mathrm{op}}
\newcommand{\asympp}{\asymp_p}
\newcommand{\avg}{\fint\nolimits}

\title[Sharp lower bounds with matrix weights]{On the matrix $A_p$ conjecture}
\author{Yong Jiao}
\address{School of Mathematics and Statistics, Central South University, HNP-LAMA, Changsha 410083, China}
\email{jiaoyong@csu.edu.cn}
\thanks{This paper was supported by the National Key R\&D Program of China (No.2023YFA1010800), the National Natural Science Foundation of China (Nos.11971419, 12125109, W2411005, 12361131578, 12671164), the Provincial Natural Science Foundation of Hunan (Nos.2024JJ1010, 2025ZYJ002, 2024RC1016, 2025JJ40007).}

\author{Xingyan Quan}
\address{School of Mathematics and Statistics, Central South University, HNP-LAMA, Changsha 410083, China}
\email{quanxingyan@csu.edu.cn}

\author{Lian Wu}
\address{School of Mathematics and Statistics, Central South University, HNP-LAMA, Changsha 410083, China}
\email{wulian@csu.edu.cn}

\author{Guangheng Xie}
\address{School of Mathematics and Statistics, Central South University, HNP-LAMA, Changsha 410083, China}
\email{xieguangheng@csu.edu.cn}

\date{}
\subjclass[2020]{Primary 42B20, Secondary 42B35, 47A30}
\keywords{Matrix weights, Hilbert transform, sharp weighted inequalities,
Haar shifts, martingales}

\begin{document}
\begin{abstract}
This paper provides a solution to the matrix $A_p$ conjecture. In particular,
for every \(1<p<\infty\), we show that the norm of the Hilbert
transform on the matrix-weighted space \(L^p(W)\) is bounded below by
\(c_p[W]_{A_p}^{1+1/[p(p-1)]}\) for a positive constant \(c_p\) depending only
on \(p\). This result, combined with the case $p=2$ due to Domelevo, Petermichl, Treil and Volberg
\cite{domelevo-petermichl-treil-volberg-2024}, completes the picture of the matrix $A_p$ conjecture.
\end{abstract}
\maketitle

\section{Introduction}
Weighted norm inequalities for the Hilbert transform occupy a central place in harmonic analysis. For a scalar weight \(w\) and \(1<p<\infty\), Hunt, Muckenhoupt, and Wheeden \cite{hunt-muckenhoupt-wheeden-1973} characterized the boundedness of the Hilbert transform \(H\) on \(L^p(w)\) by the Muckenhoupt condition \(w\in A_p\). The subsequent quantitative theory seeks to determine the optimal dependence of the operator norm on the characteristic \([w]_{A_p}\). For the Hilbert transform, the sharp scalar estimate is
\begin{equation}\label{eq:scalar-sharp-bound}
    \|H\|_{L^p(w)\to L^p(w)}
    \lesssim_p
    [w]_{A_p}^{\max\{1,\,1/(p-1)\}},
\end{equation}
and the exponent in \eqref{eq:scalar-sharp-bound} cannot be decreased; see \cite{petermichl-2007,hytonen-2012}.

The theory of matrix weights originated in questions concerning multivariate stationary processes, vector-valued Hardy spaces, and Toeplitz operators. Recall that, for a fixed dimension \(d\geq 2\), a matrix weight is a locally integrable function
\[
W\colon\mathbb R\to\mathbb C^{d\times d}
\]
whose values are positive definite almost everywhere. The weighted space \(L^p(W)\) is equipped with the norm
\[
\|f\|_{L^p(W)}
:=
\left(
\int_{\mathbb R}
\bigl|W(x)^{1/p}f(x)\bigr|^p\,dx
\right)^{1/p}.
\]
Following Roudenko \cite{roudenko-2003}, we define the matrix \(A_p\) characteristic by
\begin{equation}\label{eq:Roudenko}
    [W]_{A_p}
    =
    \sup_I
    \avg_I
    \left(
        \avg_I
        \|W(x)^{1/p}W(y)^{-1/p}\|_{\mathrm{op}}^q\,dy
    \right)^{p-1}dx,
\end{equation}
where \(q=p/(p-1)\) is the conjugate exponent of \(p\), and the supremum is taken over all finite intervals \(I\subset\mathbb R\). For scalar weights, this definition reduces to the usual \(A_p\) characteristic. We say that \(W\in A_p\) if \([W]_{A_p}<\infty\). Treil and Volberg \cite{treil-volberg-1997} established the matrix analogue of the Hunt--Muckenhoupt--Wheeden theorem for \(p=2\), while the general \(A_p\) theory was developed by Nazarov, Treil, and Volberg \cite{volberg-1997}. As in the scalar setting, the matrix \(A_p\) condition characterizes boundedness. The quantitative dependence of the operator norm on \([W]_{A_p}\), however, is fundamentally different from that in the scalar case, owing to the noncommutativity of the matrices \(W(x)\) and \(W(y)\).

The first bound valid throughout the full range \(1<p<\infty\) was obtained by Cruz-Uribe, Isralowitz, and Moen \cite{cruzuribe-isralowitz-moen-2018}. They proved in \cite[Corollary~1.16]{cruzuribe-isralowitz-moen-2018} that, for every Calder\'on--Zygmund operator \(T\), every \(1<p<\infty\), and every matrix \(A_p\) weight \(W\),
\begin{equation}\label{eq:known-matrix-upper-bound}
    \|T\|_{L^p(W)\to L^p(W)}
    \lesssim_{p,d,T}
    [W]_{A_p}^{\,1+\frac{1}{p-1}-\frac{1}{p}}
    =
    [W]_{A_p}^{\,1+\frac{1}{p(p-1)}}.
\end{equation}
For \(p=2\), this reduces to the \([W]_{A_2}^{3/2}\) estimate obtained via convex-body domination by Nazarov, Petermichl, Treil, and Volberg \cite{nazarov-petermichl-treil-volberg-2017}. The exponent in \eqref{eq:known-matrix-upper-bound} was not initially expected to be optimal. Indeed, immediately after establishing this estimate, Cruz-Uribe, Isralowitz, and Moen conjectured that the sharp exponent should instead coincide with the scalar exponent $
\max\{1,1/(p-1)\};$
see \cite[Remark~1.17]{cruzuribe-isralowitz-moen-2018}.

The first decisive obstruction to this conjectural picture arose at \(p=2\). Domelevo, Petermichl, Treil, and Volberg \cite{domelevo-petermichl-treil-volberg-2024} constructed matrix \(A_2\) weights for which
\[
\|H\|_{L^2(W)\to L^2(W)}
\gtrsim
[W]_{A_2}^{3/2},
\]
thereby disproving the matrix \(A_2\) conjecture and showing that the previously known upper bound is sharp at \(p=2\). The sharp exponent for \(p\neq2\), however, remained open. In particular, after recording \eqref{eq:known-matrix-upper-bound} and the sharp result at \(p=2\), Cruz-Uribe explicitly listed the determination of the sharp exponents for \(p\neq2\) as an open problem in \cite{cruzuribe-2025}. See also \cite{lerner-li-ombrosi-riverarios-2024} for an earlier explicit formulation asking whether the exponent in \eqref{eq:known-matrix-upper-bound} could be improved to the scalar one. The purpose of this paper is to resolve this problem. We show that the exponent in \eqref{eq:known-matrix-upper-bound} is sharp for every \(1<p<\infty\). Our main result is the following.

\begin{theorem}\label{thm:main-1}
    Let \(1<p<\infty\). There exists a constant \(c_p>0\) such that, for every sufficiently large \(Q\), there exist a \(2\times2\) matrix weight \(W\) satisfying
$[W]_{A_p}\leq c_pQ$
    and a nonzero function \(f\in L^p(W)\) such that
    \begin{equation}\label{eq:main-lower}
        \|Hf\|_{L^p(W)}
        \geq
        c_p Q^{1+1/[p(p-1)]}\|f\|_{L^p(W)}.
    \end{equation}
\end{theorem}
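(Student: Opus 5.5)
We follow the strategy of Domelevo--Petermichl--Treil--Volberg for $p=2$: build the counterexample first for a dyadic model operator, then transfer it to $H$. The model is a martingale transform or Haar shift on $[0,1)$ with a $2\times 2$ weight $W$ that is constant on the children of each dyadic interval. Such a $W$ is governed by a dyadic martingale of positive matrices. We choose the dyadic model so that a lower bound for it yields a lower bound for $H$. One way is an averaging/transference argument: dilations and translations of the Haar shift average to $cH$, as in Petermichl's representation. The other is to check directly that $H$ applied to the test function differs from the shift by an error we can control. The weighted estimate then passes from the model to $H$ with constants depending only on $p$, since the weight is constant on the relevant scales.

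\textbf{Construction of the weight.} The plan is to let $W(x)=U(\theta(x))^{*}\,\mathrm{diag}(\lambda(x),\lambda(x)^{-1})\,U(\theta(x))$, where $U(\theta)$ is a rotation matrix. The eigenvalue ratio is fixed at size about $Q^{\alpha}$, and the angle $\theta$ performs a slow dyadic random walk. Each step turns the angle by about $\delta$, and there are about $N\approx\delta^{-2}$ steps. We pick the eigenvalue scaling for general $p$ so that the matrix Roudenko characteristic \eqref{eq:Roudenko} stays $\lesssim_p Q$. On a single interval the weight is nearly rank-one-dominated in a fixed direction, so the characteristic reduces to a scalar computation $\lambda^{1/p}\lambda^{1/p}$-type. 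The angle spread over an interval contributes a factor of about $(1+Q^{\alpha}\cdot\text{angle}^{2})$, which must stay bounded. This forces the balance between $\delta$, $N$ and $Q$. We verify $[W]_{A_p}\le c_pQ$ by a Bellman/stopping-time estimate along the walk: the martingale $\theta$ has quadratic variation $N\delta^{2}\approx 1$ on each interval.

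\textbf{Test function and lower bound.} We take $f=W^{-1/p}e$-type, i.e.\ a function aligned with the "small" eigendirection, so that $\|f\|_{L^p(W)}\approx 1$. The transform then rotates mass into the "large" eigendirection. Each Haar step of the shift produces a contribution of size $\delta\cdot Q^{\text{power}}$ in that direction. The key point is that these contributions add coherently (not in square function fashion) because the angle is monotone along the chosen paths. We expect a gain of $N\delta\cdot Q^{\beta}$. The exponents $\alpha$, $\beta$, $\delta=\delta(Q)$ are to be optimized to produce $Q^{1+1/[p(p-1)]}$. This matches the gap between $[W]_{A_p}^{1}$ (the $L^p$ part) and $[W]_{A_p}^{1/(p-1)-1/p}$ (the dual part) in \eqref{eq:known-matrix-upper-bound}.

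\textbf{Main obstacle.} The hardest step is the $L^p$ computation for $p\neq 2$. We lose orthogonality, so both the $A_p$ characteristic of the random-walk weight and the lower bound $\|Hf\|_{L^p(W)}$ need genuine $L^p$ martingale estimates. We would try a Bellman-function or explicit distribution computation. The walk is symmetric, so the relevant quantities are expectations of $\cosh$-type functions of $\theta$. We would also treat $p<2$ and $p>2$ simultaneously via duality: $L^p(W)$ with $W$ pairs with $L^q(W^{-q/p})$, and the exponent $1+1/[p(p-1)]$ is invariant under $p\leftrightarrow q$ together with $[W^{-q/p}]_{A_q}=[W]_{A_p}^{q/p}$. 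So it suffices to handle $p\ge2$ carefully and deduce the rest.
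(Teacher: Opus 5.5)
\textbf{The weight cannot work as proposed.} You fix the eigenvalue ratio, $W(x)=U(\theta(x))^{*}\mathrm{diag}(\lambda,\lambda^{-1})U(\theta(x))$, and move only the eigenvectors. No choice of $\alpha,\delta,N$ can then reach the target, because every such weight satisfies $\|H\|_{L^p(W)\to L^p(W)}\lesssim_p 1+[W]_{A_p}^{1/p}$. To see this, let $P(x)$ be the projection onto the small eigenvector and take $\lambda\ge1$, so $W^{-1/p}=\lambda^{-1/p}I+(\lambda^{1/p}-\lambda^{-1/p})P$. With $h=W^{1/p}f$ one has $W^{1/p}HW^{-1/p}h=Hh+W^{1/p}[H,W^{-1/p}]h$. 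The Coifman--Rochberg--Weiss commutator bound then gives $\|Hf\|_{L^p(W)}\lesssim_p(1+\lambda^{2/p}\|P\|_{\mathrm{BMO}})\|f\|_{L^p(W)}$. Conversely, $\|W(x)^{1/p}W(y)^{-1/p}\|_{\mathrm{op}}\ge\lambda^{2/p}|\sin(\theta(x)-\theta(y))|=\lambda^{2/p}\|P(x)-P(y)\|_{\mathrm{op}}$. Jensen's inequality in both integrals of the Roudenko characteristic then gives $[W]_{A_p}^{1/p}\ge\lambda^{2/p}\|P\|_{\mathrm{BMO}}$. Since $1/p<1+1/[p(p-1)]$, such weights are never extremal; at $p=2$ they give $[W]^{1/2}$, not $[W]^{3/2}$.

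Your heuristics are also inconsistent. Keeping $1+Q^{\alpha}(\text{angle})^2$ bounded forces angle spread $\lesssim Q^{-\alpha/2}$, not $N\delta^2\approx1$. And along a monotone path of a walk with $N\approx\delta^{-2}$ steps the angle reaches $\delta^{-1}$, so the directions wrap around rather than add coherently.

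The missing idea is \emph{stretching}. In the paper the eigenvalues themselves move along the tree: $x_{a,n}=x_{a,0}r_a^{-(p-1)n}$ and $x_{b,n}=x_{b,0}r_b^{n}$. Two parameters are needed since $p\ne2$; they are chosen so that the scalar means of both $W$ and $W^{-1/(p-1)}$ are preserved, with $x_{i,n}y_{i,n}^{p-1}=Q$ on continuing intervals and $x^{\#}_{i,n}(y^{\#}_{i,n})^{p-1}=1$ on stopping intervals.

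The rotation angle is tied to the spectrum, $\delta_n=(x_{a,n}/x_{b,n})^{1/p}=\delta_0\ell^{-n}$, so the total rotation below level $n$ is $O(\delta_n)$. This gives $[W_N]^{\mathrm{dy}}_{A_p}\lesssim_pQ$ uniformly in $N$, via diagonal $V_I,V_I^\sharp$ with $\|V_IV_I^\sharp\|_{\mathrm{op}}^p=Q$.

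The test function points along $\mathbf a_I$ with amplitude $\ell^n$, so its Haar coefficient on $J\in\mathcal P_{2n}$ is $|J|^{1/2}\delta_0\mathbf b_J$, of size independent of $n$. All $\mathbf b_J$ on a path lie within $O(\delta_0)$ of $\mathbf b_L$. Hence these coefficients accumulate in $\mathbb S^*\mathbf f_N$ to about $k\delta_0/\sqrt2$ in direction $\mathbf b_L$ on $L\in E_{2k}$, while the $\mathbb S$-part is only $O((2-\ell)k\delta_0)$.

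Testing against $\mathbf g_N$ and using $\sum_k k^p(r_b/2)^k\asymp Q^{p+1}$ and $2-r_b\asymp Q^{-1}$, compared with $\|\mathbf f_N\|^p_{L^p(W_N)}\lesssim x_{a,0}Q^{-1/(p-1)}$, gives exactly $Q^{1+1/[p(p-1)]}$. Your plan stops at ``exponents to be optimized'' before any of this.

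\textbf{The transfer is also unsupported.} Averaging translated and dilated Haar shifts to $cH$ (Petermichl's representation) carries \emph{upper} bounds from shifts to $H$. A lower bound for one shift on one grid says nothing about the average. Also, ``the weight is constant on the relevant scales'' does not control the matrix $A_p$ characteristic on intervals straddling dyadic boundaries, where a dyadic step weight with $[W]^{\mathrm{dy}}_{A_p}\lesssim Q$ can be far worse.

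The paper uses the Domelevo--Petermichl--Treil--Volberg transfer identity along quasi-periodizations of $\mathbf f,\mathbf g$. There $H^{dy}=c_1(\mathbb S-\mathbb S^*)$ with $c_1\ne0$, and the boundary term is at most $8|c_0|([W]^{\mathrm{dy}}_{A_p})^{1/p}\lesssim Q^{1/p}$, so it is absorbed. The paper also remodels the weight, tracking the laws $\mu_J$ on source intervals, to get $[\widetilde W]_{A_p}\le16^p[W]^{\mathrm{dy}}_{A_p}$ uniformly in depth and frequencies, plus norm tracking for $\mathbf f,\mathbf g$.

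Your duality reduction of $1<p<2$ to $q>2$ matches the paper's. Note, though, that $[W^{-q/p}]_{A_q}=[W]_{A_p}^{q/p}$ is not an exact identity for the Roudenko characteristic; it holds only up to dimensional constants (enough here), or can be checked directly as the paper does.
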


Together with the upper bound \eqref{eq:known-matrix-upper-bound}, Theorem~\ref{thm:main-1} determines the sharp exponent for the Hilbert transform throughout the full range \(1<p<\infty\). Compared with the scalar theory, the matrix exponent is larger by \(1/[p(p-1)]\) when \(p>2\), and by \((p-1)/p\) when \(1<p<2\). Thus the conjectured scalar exponent fails for every \(p\neq2\) as well. Together with the \(p=2\) result of Domelevo, Petermichl, Treil, and Volberg \cite{domelevo-petermichl-treil-volberg-2024}, Theorem~\ref{thm:main-1} therefore completes the determination of the sharp power of the matrix \(A_p\) characteristic for the Hilbert transform.

In view of the scalar theory, one might hope to obtain the sharp matrix exponent for \(p\neq2\) by extrapolating from the case \(p=2\). This approach, however, does not yield the desired result. Indeed, applying the quantitative extrapolation theorem of Bownik and Cruz-Uribe \cite{bownik-cruzuribe-2026} to the sharp \(L^2\) estimate gives the exponent
$3/2\max\left\{1,1/(p-1)\right\},$
which is strictly larger than the exponent in \eqref{eq:known-matrix-upper-bound} whenever \(p\neq2\). Thus quantitative extrapolation from \(p=2\) cannot recover the sharp exponent away from \(2\). We therefore prove Theorem~\ref{thm:main-1} by a direct construction. The argument proceeds in two main stages: we first establish the corresponding lower bound for the odd Haar-shift difference \(\mathbb S-\mathbb S^*\) (see Theorem~\ref{thm:main}); and  then transfer this dyadic lower bound to the Hilbert transform on the real line.

To prove Theorem~\ref{thm:main}, we first consider the case \(p>2\). In such case, the construction of the matrix weight is based on the rotation-and-stretching scheme introduced by Domelevo, Petermichl, Treil, and Volberg \cite{domelevo-petermichl-treil-volberg-2024}. However, a new difficulty arises away from \(p=2\): the averaging identities for the weight and its dual now involve different exponents. We overcome this by introducing two stretching parameters and choosing them so that the required identities hold simultaneously. We next construct a martingale test function \(f_N\) whose averages are aligned with the eigenvector corresponding to the smaller eigenvalue of the weight. This alignment keeps the weighted norm of \(f_N\) small, while the successive rotations generate large Haar coefficients and hence a large weighted norm of \((\mathbb S-\mathbb S^*)f_N\). A further difficulty is the possible cancellation between \(\mathbb S f_N\) and \(\mathbb S^*f_N\). To address this, we estimate the two terms on the same intervals and obtain quantitative control of both the sign and the magnitude of their difference. For sufficiently large \(N\), this yields
\[
\|f_N\|_{L^p(W_N)}^p
\lesssim_p
x_{a,0}Q^{-1/(p-1)},
\qquad
\|(\mathbb S-\mathbb S^*)f_N\|_{L^p(W_N)}^p
\gtrsim_p
x_{a,0}Q^p,
\]
where \(x_{a,0}\) denotes the smaller eigenvalue of the initial matrix. Dividing these estimates and taking \(p\)-th roots produces the exponent
$
1+1/[p(p-1)].
$
We also construct an explicit pair of test functions yielding the corresponding bilinear lower bound. Finally, by interchanging these functions and estimating the \(A_q\) characteristic of the dual weight, we extend the result to the range \(1<p<2\).

To transfer the lower bound for the odd Haar-shift difference to the Hilbert transform on the real line, we use the identity of Domelevo, Petermichl, Treil, and Volberg \cite{domelevo-petermichl-treil-volberg-2024}, together with the remodeling method of Kakaroumpas and Treil \cite{kakaroumpas-treil-2021}. The main issue is to control the matrix \(A_p\) expression on arbitrary intervals, including intervals that cross dyadic boundaries. For each relevant interval, we compare the distribution of the remodeled matrix values with that on a single interval of the original dyadic weight. This comparison controls the \(A_p\) characteristic without incurring an additional multiplicative loss at each remodeling step. In particular, the resulting bound is uniform in both the depth of the original construction and the number of remodeling steps. Combined with quasi-periodization, this allows us to transfer the dyadic lower bound to the Hilbert transform while preserving the required matrix \(A_p\) estimate.

The remainder of the paper is organized as follows. In Section~\ref{sec:2}, we construct the dyadic matrix weights through alternating rotations and stretches and proves an \(A_p\) bound uniform in the number of generations. Section~\ref{sec:test-function} consists of the  construction of the martingale test functions and the  estimates of their weighted norms. In Section~\ref{sec:haar-lower}, we establish the lower bound for the odd Haar-shift difference \(\mathbb S-\mathbb S^*\). Finally, Section~\ref{sec:hilbert-lower} transfers the dyadic lower bounds to the Hilbert transform on the real line by means of quasi-periodization and remodeling, while preserving the matrix \(A_p\) characteristic up to a multiplicative constant.

\section{Construction of the dyadic \texorpdfstring{$A_p$}{Ap} weight}\label{sec:2}

Throughout this section, assume that $2<p<\infty$. 	Let $N\ge2$ be an integer that will be chosen sufficiently large later. We construct finite weights $W_N=W_{N,Q,\delta_0}$, depending on $Q\gg1$ and sufficiently small $\delta_0>0$, such that $[W_N]_{A_p}^{\mathrm{dy}}\lesssim_p Q$ uniformly in $N$ (Proposition~\ref{lem:dyadic-Ap-upper}). We suppress the indices $Q$ and $\delta_0$. Unless stated otherwise,
constants may depend on $p$ but are independent of $Q$, $N$, and
the periodization frequencies introduced in Section~\ref{sec:hilbert-lower}.

We use half-open dyadic intervals. The symbols $\one_I$ and $\chi_I$
both denote the function equal to one on $I$ and zero elsewhere.
For $m\ge0$, let $\mathcal D_m$ be the collection of $m$th-generation dyadic intervals in $I_0=[0,1)$, namely,
\begin{equation*}
\begin{aligned}
\mathcal D_m
&=\left\{
[j2^{-m},(j+1)2^{-m})\mid 0\le j<2^m
\right\}.
\end{aligned}
\end{equation*}
Set $$\mathcal D
=\bigcup_{m\ge0}\mathcal D_m.$$
For $I\in\mathcal D\setminus\{I_0\}$, let $\tilde I$ denote its parent. For every $I\in\mathcal D$, let $I_+$ and $I_-$ denote its left and right children, respectively.

Set $\mathcal P_0=\{I_0\}$. The construction continues on the
intervals in $\mathcal P_i$ and stops on the intervals in $E_i$.
Define these families by

\begin{equation}\label{constru-P}
\mathcal P_{i}=\begin{cases}
\bigcup_{I\in\mathcal P_{i-2}}\{I_{++},I_{--}\}
,&i=2n+2,\, 0\leq n<N-2,\\

\bigcup_{I\in\mathcal P_{i-1}}\{I_+,I_-\}
,&i=2n+1,\, 0\leq n<N-1.\end{cases}\subseteq\mathcal D_{i},
\end{equation}
and
\begin{equation}\label{construc-E}
E_{i}=\begin{cases}\bigcup_{I\in\mathcal P_{i-2}}\{I_{+-},I_{-+}\}
,&i=2n+2,\,0\leq n<N-1,\\
\bigcup_{I\in\mathcal P_{i-4}}\{I_{++},I_{--}\}
,& i=2N.
\end{cases}
\end{equation}
Thus $E_{2n+2}\subseteq\mathcal D_{2n+2}$ for $0\le n<N-1$, whereas
$E_{2N}\subseteq\mathcal D_{2N-2}$. The index $2N$ names the last
family. Its dyadic generation is $2N-2$.
Set
\begingroup
$$\mathcal P
=\bigcup_{n=0}^{N-2}\mathcal P_{2n},\qquad 	E
=\bigcup_{n=1}^{N}E_{2n}.$$
\endgroup
Here $\mathcal P$ collects the even-generation intervals on which the
construction continues. By construction, we find  the following properties.
\begin{proposition}\label{pro-1}

\begin{enumerate}[\rm (i)]
\item The sets in $E$ are disjoint and $\bigcup_{I\in E}I=I_0$.
\item For every $0\le n\le N-2$ and
$J\in\mathcal P_{2n}\cup\mathcal P_{2n+1}$, the intervals
$I_\alpha\subseteq J$ belonging to
$\bigcup_{n<m\le N}E_{2m}$ partition $J$, and
$$\Bigl|\bigcup_{I_\alpha\in E_{2m}}I_\alpha\Bigr|
=2^{-(m-n)}|J|\quad(n<m<N),$$
$$\Bigl|\bigcup_{I_\alpha\in E_{2N}}I_\alpha\Bigr|
=2^{-(N-n-1)}|J|.$$

\item Every $I\in\mathcal D$ is either contained in an interval
$J\in E$, or belongs to $\mathcal P_{2n}\cup\mathcal P_{2n+1}$ for
some $0\le n\le N-2$.
\end{enumerate}

\end{proposition}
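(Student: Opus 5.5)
The plan is to prove all three items together by induction over the generations of the construction. The basic observation is that every $J\in\mathcal P_{2n}$ with $0\le n\le N-2$ has four grandchildren. Two of them, $J_{+-}$ and $J_{-+}$, lie in $E_{2n+2}$, since the index range $0\le n<N-1$ allows this. The other two, $J_{++}$ and $J_{--}$, lie in $\mathcal P_{2n+2}$ when $n<N-2$ and in $E_{2N}$ when $n=N-2$. Also, the children $J_\pm$ are exactly the members of $\mathcal P_{2n+1}$ below $J$, and each child $J_+$ (resp. $J_-$) contains exactly one stopped grandchild $J_{+-}$ (resp. $J_{-+}$) and one continuing grandchild $J_{++}$ (resp. $J_{--}$). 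Every statement will reduce to this one-step picture.

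For (ii), I would use backward induction on $n$, starting at $n=N-2$. Let $J\in\mathcal P_{2N-4}$. Then $J$ is partitioned by the two intervals in $E_{2N-2}$, of total length $\tfrac12|J|=2^{-1}|J|$, and the two intervals in $E_{2N}$, also of total length $2^{-(N-(N-2)-1)}|J|=\tfrac12|J|$. This matches the claimed formulas. For the inductive step, take $J\in\mathcal P_{2n}$ with $n<N-2$. Half of $|J|$ lies in $E_{2n+2}$, and the other half is $J_{++}\cup J_{--}\in\mathcal P_{2n+2}$. Apply the induction hypothesis to each of these two intervals. Their $E_{2m}$-portions have measure $2^{-(m-n-1)}\cdot\tfrac12|J|=2^{-(m-n)}|J|$ for $n+1<m<N$, and $2^{-(N-n-2)}\cdot\tfrac12|J|=2^{-(N-n-1)}|J|$ for $m=N$. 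Together with $E_{2n+2}$ this gives a partition of $J$ with the stated measures. As a consistency check, $\sum_{k=1}^{N-n-1}2^{-k}+2^{-(N-n-1)}=1$. For $J=K_+\in\mathcal P_{2n+1}$ with $K\in\mathcal P_{2n}$, I would run the same argument restricted to $J$. The piece $K_{+-}$ is half of $J$, and $K_{++}$ is the other half, which is then handled by the hypothesis at level $n+1$ (or lies in $E_{2N}$ if $n=N-2$). The proportions are therefore identical.

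Item (i) is then the case $J=I_0\in\mathcal P_0$ of (ii), together with disjointness. Disjointness follows from the recursive partition: intervals in different $E_{2m}$ arise as distinct branches of a nested partition tree, so two of them are never nested.

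For (iii), I would take $I\in\mathcal D$ that does not belong to any $\mathcal P_{2n}\cup\mathcal P_{2n+1}$, and consider the chain of its ancestors, including $I$ itself. Since $I_0\in\mathcal P_0$, there is a smallest ancestor $J$ outside $\bigcup_{n}(\mathcal P_{2n}\cup\mathcal P_{2n+1})$, and its parent $\tilde J$ lies in some $\mathcal P_k$. If $k=2n$ is even with $n\le N-2$, then the children of $\tilde J$ lie in $\mathcal P_{2n+1}$, because $2n+1\le 2N-3$ is in the allowed range. This contradicts the choice of $J$, so $k=2n+1$. In that case $J$ is a grandchild of some member of $\mathcal P_{2n}$. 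By the one-step picture, $J$ lies in $E_{2n+2}$, in $\mathcal P_{2n+2}$, or (when $n=N-2$) in $E_{2N}$; the middle option is excluded by the choice of $J$. Hence $J\in E$ and $I\subseteq J$.

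The main obstacle is purely bookkeeping: one has to track the index ranges carefully. In particular, $\mathcal P_{2n+2}$ is defined only for $n<N-2$, whereas $E_{2n+2}$ is defined for $n<N-1$, and the final family $E_{2N}$ consists of intervals of generation $2N-2$, not $2N$. The base case $n=N-2$ must be treated separately for exactly this reason.
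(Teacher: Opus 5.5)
Your proposal is correct and follows essentially the same route as the paper. Both arguments rest on the one-step decomposition of a continuing interval into two stopped and two continuing grandchildren, deduce (i) from (ii) with $J=I_0$, and prove (iii) by following ancestors from $I_0$. The only difference is organizational: you run the induction for (ii) backwards in the level $n$ of $J$, whereas the paper fixes $J$ and iterates the halving $A_{m-1}(J)=T_m(J)\sqcup A_m(J)$ forward in $m$.
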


\begin{proof}
Write $\sqcup$ for a disjoint union. Let $K$ be an even-generation
interval on which the construction continues. One step gives
\[
K=K_{++}\sqcup K_{+-}\sqcup K_{-+}\sqcup K_{--},
\qquad |K_{\sigma\tau}|=\frac{|K|}{4},
\]
where $\sigma,\tau\in\{+,-\}$. Thus
\begin{align*}
\text{part added to }E&=K_{+-}\sqcup K_{-+},
&|K_{+-}|+|K_{-+}|&=\frac{|K|}{2},\\
\text{part added to }\mathcal P&=K_{++}\sqcup K_{--},
&|K_{++}|+|K_{--}|&=\frac{|K|}{2}.
\end{align*}
For an odd-generation interval $J=K_+$ or $K_-$, the corresponding
calculation is
\[
\begin{array}{c|c|c}
J&\text{part added to }E&\text{part added to }\mathcal P\\ \hline
K_+&K_{+-}&K_{++}\\
K_-&K_{-+}&K_{--}
\end{array}
\qquad
|\text{each part}|=\frac{|J|}{2}.
\]
At the last step, both outer grandchildren belong to $E_{2N}$.

Fix $J\in\mathcal P_{2n}\cup\mathcal P_{2n+1}$, where
$0\le n\le N-2$. Define the subsets of $J$
\[
T_m(J)=\bigcup_{\substack{L\in E_{2m}\\L\subseteq J}}L
\quad(n<m\le N),\qquad A_n(J)=J,
\]
and
\[
A_m(J)=\bigcup_{\substack{K\in\mathcal P_{2m}\\K\subseteq J}}K
\quad(n<m\le N-2),\qquad A_{N-1}(J)=T_N(J).
\]
Here $A_m(J)$ is the part on which the construction continues after
the step indexed by $m$. The calculation for one step, including
the last step, gives
\begin{equation}\label{eq:prop21-recursion}
A_{m-1}(J)=T_m(J)\sqcup A_m(J),\qquad
|T_m(J)|=|A_m(J)|=\frac12|A_{m-1}(J)|
\quad(n<m<N).
\end{equation}
Iterating yields
\begin{align*}
|A_m(J)|&=2^{-(m-n)}|J| &&(n\le m\le N-1),\\
|T_m(J)|&=\frac12\,2^{-(m-1-n)}|J|
=2^{-(m-n)}|J| &&(n<m<N),\\
|T_N(J)|&=|A_{N-1}(J)|
=2^{-(N-n-1)}|J|.
\end{align*}
The disjoint decompositions in \eqref{eq:prop21-recursion} also give
\[
J=\left(\bigsqcup_{m=n+1}^{N-1}T_m(J)\right)\sqcup T_N(J),
\]
with the length check
\[
\sum_{m=n+1}^{N-1}2^{-(m-n)}+2^{-(N-n-1)}
=\sum_{j=1}^{N-n-1}2^{-j}+2^{-(N-n-1)}=1.
\]
Intervals within each $T_m(J)$ have the same dyadic generation and
are disjoint. This proves (ii). Taking $n=0$ and $J=I_0$ gives (i).

For (iii), follow the dyadic ancestors of $I$ from $I_0$.
Before the construction stops, the only possibilities are
\[
\begin{array}{c|c}
\text{interval}&\text{next intervals}\\ \hline
K\in\mathcal P_{2n}&K_+,K_-\in\mathcal P_{2n+1}\\
K_+\in\mathcal P_{2n+1}&K_{+-}\in E_{2n+2},\quad K_{++}\\
K_-\in\mathcal P_{2n+1}&K_{-+}\in E_{2n+2},\quad K_{--}
\end{array}
\]
and the two outer grandchildren satisfy
\[
K_{++},K_{--}\in
\begin{cases}
\mathcal P_{2n+2},&n<N-2,\\
E_{2N},&n=N-2.
\end{cases}
\]
Hence either an ancestor $J\in E$ is reached, giving
$I\subseteq J$, or $I$ itself belongs to one of the families on which
the construction continues. In the latter case
\[
I\in\bigcup_{n=0}^{N-2}
(\mathcal P_{2n}\cup\mathcal P_{2n+1}).
\]
Every branch reaches $E$ by generation $2N-2$, so these alternatives
exhaust all dyadic intervals $I$.
\end{proof}
\subsection{Starting point}\label{sec:starting-point}
Let ${\bf a}_{I_0}, {\bf b}_{I_0}$  form an orthonormal basis of $\mathbb R^2$. All vectors are regarded as column vectors in $\R^2$. Thus, ${\bf a}_{I_0}{\bf a}_{I_0}^*$ and ${\bf b}_{I_0}{\bf b}_{I_0}^*$ respectively denote the rank-one
orthogonal projections onto ${\rm span}\{{\bf a}_{I_0}\}$ and ${\rm span}\{{\bf b}_{I_0}\}$.
Given $0<\delta_0<1$, set

\begin{equation*}
x_{a,0}=\delta_0^p,\qquad   x_{b,0}=1.
\end{equation*}
On the initial interval $I_0$, we choose $X_{I_0}$ to be diagonal with respect to the orthonormal basis $\{{\bf a}_{I_0},{\bf b}_{I_0}\}$, so that
$$X_{I_0}=x_{a,0}{\bf a}_{I_0}{\bf a}_{I_0}^*+x_{b,0}{\bf b}_{I_0}{\bf b}_{I_0}^*.$$

\subsection{Rotation}\label{rotation}
The rotation introduces a component in the direction of the larger
eigenvalue. Its size must remain compatible with the weighted norm.
The choice below gives
\[
x_{b,0}^{1/p}\sin\theta_0
=\frac{x_{a,0}^{1/p}}{\sqrt{1+\delta_0^2}}.
\]
Thus the contribution introduced by the rotation has the same order
as the original component after multiplication by the weight.
Set the rotation angle in the first step to be $
\theta_0=\arctan\delta_0.$
We rotate the eigenvectors ${\bf a}_{I_0}, {\bf b}_{I_0}$ through the angle $\theta_0$. Define
$$	{\bf a}_{I_0,\pm}=\cos\theta_0\,{\bf a}_{I_0}\pm\sin\theta_0\, {\bf b}_{I_0},
\qquad {\bf b}_{{I_0},\pm}=\mp\sin\theta_0\,{\bf a}_{I_0}+\cos\theta_0\, {\bf b}_{I_0}.$$
Recall that $I_{0,+}, I_{0,-}\in\mathcal P_1$ are the left and right children of $I_0$.
We keep the eigenvalues $x_{a,0},x_{b,0}$ and apply these rotations to $I_{0,+}, I_{0,-}$, respectively, and set
$$X_{I_{0,\pm}}=x_{a,0}{\bf a}_{I_{0,\pm}}{\bf a}_{I_{0,\pm}}^*+x_{b,{0}}{\bf b}_{I_{0,\pm}}{\bf b}_{I_{0,\pm}}^*,$$

\subsection{Stretching}\label{sketching}

The stretching parameters are chosen to preserve scalar averages for
both the weight and its dual. To see the constraint, suppose
$xy^{p-1}=Q$. Replacing $x,y$ by $sx,ty$ on one child requires
$st^{p-1}=1$ to preserve this product. The values on the other child
must then be $(2-s)x,(2-t)y$ to preserve both averages. Requiring
their product to equal one gives
\[
Q(2-s)(2-t)^{p-1}=1.
\]
Taking $(s,t)=(r_a^{-(p-1)},r_a)$ in the first direction and
$(s,t)=(r_b,r_b^{-1/(p-1)})$ in the second gives the equations below.

Fix the stretching parameters $r_a$, $r_b\in(1,2)$  satisfying
\begin{align}
Q\bigl(2-r_a^{-(p-1)}\bigr)(2-r_a)^{p-1}=1, \quad
Q(2-r_b)\bigl(2-r_b^{-1/(p-1)}\bigr)^{p-1}=1.
\label{eq:rb}
\end{align}
Such parameters exist for every $Q>1$ because each left-hand side of \eqref{eq:rb} is continuous on $[1,2]$, equals $Q$ at $1$, and equals $0$ at $2$.
Set
\begin{equation}
\ell=(r_a^{p-1}r_b)^{1/p}.
\label{eq:def-d}
\end{equation}
We write $A\asymp_p B$ when both $A\lesssim_p B$ and $B\lesssim_p A$. The following asymptotic properties of the stretch parameters will be used in the sequel.

\begin{lemma}
\label{lem:stretch-parameter-asymptotics}
Let $2<p<\infty$, let $Q$ be sufficiently large, and let
$r_a,r_b\in(1,2)$ satisfy
\eqref{eq:rb}.  For $\ell$ defined above, we have
\begin{equation}
2-r_a\asympp Q^{-1/(p-1)}\qquad
2-r_b\asympp Q^{-1},
\label{eq:d-tau-asymptotics}
\end{equation}
and
\begin{equation}
2-\ell\asympp Q^{-1/(p-1)}.
\label{eq:d-tau-asymptotics2}
\end{equation}
\end{lemma}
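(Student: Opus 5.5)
Both defining relations in \eqref{eq:rb} have the form $F(r)=1/Q$, where $F$ is a product of two factors. One factor stays bounded away from zero as $r\to2$, and the other vanishes at $r=2$. The plan is to show that $r$ must lie close to $2$, read off which factor is small, and then deduce the asymptotics of $\ell$.

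\textbf{Step 1: $r_a\to2$ and $r_b\to2$.} The relation for $r_a$ is $F_a(r):=(2-r^{-(p-1)})(2-r)^{p-1}=1/Q$. On $[1,2]$ the factor $2-r^{-(p-1)}$ lies in $[1,2-2^{-(p-1)}]\subset[1,2)$, so it is $\asymp_p1$. Hence $(2-r_a)^{p-1}\asymp_p Q^{-1}$, which gives $2-r_a\asymp_p Q^{-1/(p-1)}$. This holds for \emph{any} solution $r_a\in(1,2)$, so uniqueness of the solution is not needed.

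The relation for $r_b$ is $F_b(r):=(2-r)(2-r^{-1/(p-1)})^{p-1}=1/Q$. Here $2-r^{-1/(p-1)}\in[1,2-2^{-1/(p-1)}]$, so the factor $(2-r^{-1/(p-1)})^{p-1}$ is $\asymp_p1$. Therefore $2-r_b\asymp_p Q^{-1}$.

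\textbf{Step 2: asymptotics of $\ell$.} Write $\varepsilon_a=2-r_a$ and $\varepsilon_b=2-r_b$. Then
\[
\ell^p=r_a^{p-1}r_b=2^p(1-\varepsilon_a/2)^{p-1}(1-\varepsilon_b/2).
\]
Since $\varepsilon_a,\varepsilon_b\in(0,1)$ and both are small for $Q$ large, Taylor expansion (or the elementary bounds $1-cx\le(1-x)^{\alpha}\le1-c'x$ on $[0,1/2]$, with constants depending on $\alpha$) gives
\[
1-(\ell/2)^p\asymp_p\varepsilon_a+\varepsilon_b .
\]
We also have $1-(\ell/2)^p\asymp_p 1-\ell/2$, because $\ell\in(1,2)$ and $1-t^p\asymp_p 1-t$ for $t\in[1/2,1]$. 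It follows that
\[
2-\ell\asymp_p \varepsilon_a+\varepsilon_b\asymp_p Q^{-1/(p-1)}+Q^{-1}.
\]
Since $p>2$, we have $1/(p-1)<1$, so $Q^{-1}\le Q^{-1/(p-1)}$ for $Q\ge1$, and the sum is $\asymp Q^{-1/(p-1)}$. This proves \eqref{eq:d-tau-asymptotics2}.

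\textbf{Expected difficulty.} No step is a real obstacle. The only care needed is in Step 2: the lower bound must not be destroyed by cancellation. This is guaranteed because both deficits $\varepsilon_a$ and $\varepsilon_b$ enter $\ell^p$ with the same (negative) sign, and the hypothesis $p>2$ makes the $r_a$-deficit the dominant one.
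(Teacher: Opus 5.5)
Your proof is correct and follows essentially the same route as the paper. Step 1 matches the paper's observation that $2-r_a^{-(p-1)}$ and $(2-r_b^{-1/(p-1)})^{p-1}$ are bounded above and below by constants depending only on $p$. Step 2 reaches the same comparison $2-\ell\asymp_p(2-r_a)+(2-r_b)$ and then uses $p>2$ to discard $2-r_b$; the paper gets there by writing $\ell/2=(r_a/2)^{(p-1)/p}(r_b/2)^{1/p}$ and applying $1-xy=1-x+x(1-y)$ directly, whereas you pass through $\ell^p$ and $1-t^p\asymp_p 1-t$. The one step you leave terse, combining the single-factor bounds into $1-(1-x)^{p-1}(1-y)\asymp_p x+y$, is settled by that same splitting identity.
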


\begin{proof}

The estimate in \eqref{eq:d-tau-asymptotics} follows directly from \eqref{eq:rb}, since $2-r_a^{-(p-1)},2-r_b^{-1/(p-1)}\in(1,2)$. To prove \eqref{eq:d-tau-asymptotics2}, use the identity $1-xy=1-x+x(1-y)$ and the estimate $1-(1-t)^\alpha\asymp_\alpha t$ for $\alpha>0$ and $0\le t\le1/2$, which follows from the mean value theorem.
Then we have
\begin{align*}
2-\ell&=2-(r_a^{p-1}r_b)^{1/p}=2(1-(r_a/2)^{\frac{p-1}{p}}(r_b/2)^{\frac{1}{p}})\\&=2\Big(1-({r_a}/{2})^{\frac{p-1}{p}}+(r_a/2)^{\frac{p-1}{p}} \Big(1-(r_b/2)^{\frac{1}{p}}\Big)\Big)\\&\asympp 1-({r_a}/{2})^{\frac{p-1}{p}}+1-({r_b}/{2})^{\frac1p}\\
&\asympp 2-r_a+2-r_b.
\end{align*}
Combining this with \eqref{eq:d-tau-asymptotics}, we obtain
\begin{align*}
2-\ell\asympp 2-r_a.
\end{align*}
This proves the lemma.
\end{proof}

Henceforth, take $Q$ sufficiently large that $3/2\le\ell<2$.
Using the stretching parameters $r_a$ and $r_b$, we define
\[
\begin{aligned}
x_{a,1} &= r_a^{-(p-1)} x_{a,0},\qquad &
x_{a,1}^{\#} = (2 - r_a^{-(p-1)}) x_{a,0},\\
x_{b,1} &= r_b x_{b,0},\qquad &
x_{b,1}^{\#} = (2 - r_b)\, x_{b,0}.
\end{aligned}
\]
Recall that $\mathcal D_2 = \{I_{0,+,+}, I_{0,+,-}, I_{0,-,+}, I_{0,-,-}\}$.
When $N\ge3$, the construction in \eqref{constru-P} and \eqref{construc-E} gives
$
\mathcal P_2 = \{I_{0,+,+}, I_{0,-,-}\}$ and $
E_2 = \{I_{0,+,-}, I_{0,-,+}\}.$
For each $I \in \mathcal P_2$, we set
\[
X_{I} = x_{a,1} \mathbf{a}_I \mathbf{a}_I^* + x_{b,1} \mathbf{b}_I \mathbf{b}_I^*,
\]
where $\mathbf{a}_I = \mathbf{a}_{\tilde I}$ and $\mathbf{b}_I = \mathbf{b}_{\tilde I}$, and then proceed to the next step. For each interval $I \in E_2$, we similarly define
\[
X_{I}^\# = x_{a,1}^{\#} \mathbf{a}_I \mathbf{a}_I^* + x_{b,1}^{\#} \mathbf{b}_I \mathbf{b}_I^*,
\]
with $\mathbf{a}_I = \mathbf{a}_{\tilde I}$ and
$\mathbf{b}_I = \mathbf{b}_{\tilde I}$. No further subdivision is
used on $I$. If $N=2$, the inner grandchildren form $E_2$, while the
outer grandchildren form $E_4$ and use $x_{a,1},x_{b,1}$.
Figure~\ref{fig:step} illustrates a step before the last step.
\begin{figure}[ht]	\centering	\resizebox{0.9\textwidth}{!}{		\begin{tikzpicture}[			every node/.style={font=\scriptsize,align=center},			arr/.style={-{Latex[length=2mm]},thick},			parentellipse/.style={draw=black!70,fill=black!5,thick},			rotationellipse/.style={draw=purple!70!black,fill=purple!8,thick},			blueellipse/.style={draw=blue!70!black,fill=blue!10,thick},			orangeellipse/.style={draw=orange!80!black,fill=orange!13,thick}			]
\begin{scope}[shift={(0,0)}]				\draw[parentellipse] (0,0) ellipse [x radius=.62,y radius=1.02];				\draw[->,black!65] (0,0)--(.95,0) node[right] {$a_I$};				\draw[->,black!65] (0,0)--(0,1.25) node[above] {$b_I$};			\end{scope}			\node at (0,-1.38) {$I\in\mathcal P_{2n}$};

\begin{scope}[shift={(3,1.55)},rotate=18]				\draw[rotationellipse] (0,0) ellipse [x radius=.62,y radius=1.02];			\end{scope}			\node at (3,2.82) {$I_+$, rotate $+\theta_n$\\same scalar values};			\begin{scope}[shift={(3,-1.55)},rotate=-18]				\draw[rotationellipse] (0,0) ellipse [x radius=.62,y radius=1.02];			\end{scope}			\node at (3,-2.82) {$I_-$, rotate $-\theta_n$\\same scalar values};
\begin{scope}[shift={(7,2.85)},rotate=18]				\draw[blueellipse] (0,0) ellipse [x radius=.38,y radius=.88];			\end{scope}			\node[anchor=west] at (7.72,2.85)			{$I_{++}\in\mathcal P_{2n+2}$};			\begin{scope}[shift={(7,.92)},rotate=18]				\draw[orangeellipse] (0,0) ellipse [x radius=.80,y radius=.42];			\end{scope}			\node[anchor=west] at (7.92,.92)			{$I_{+-}\in E_{2n+2}$};			\begin{scope}[shift={(7,-.92)},rotate=-18]				\draw[orangeellipse] (0,0) ellipse [x radius=.80,y radius=.42];			\end{scope}			\node[anchor=west] at (7.92,-.92)			{$I_{-+}\in E_{2n+2}$};			\begin{scope}[shift={(7,-2.85)},rotate=-18]				\draw[blueellipse] (0,0) ellipse [x radius=.38,y radius=.88];			\end{scope}			\node[anchor=west] at (7.72,-2.85)			{$I_{--}\in\mathcal P_{2n+2}$};

\draw[arr] (.78,.30)--(2.20,1.35)
node[midway,above,sloped] {rotate $+\theta_n$};
\draw[arr] (.78,-.30)--(2.20,-1.35)
node[midway,below,sloped] {rotate $-\theta_n$};

\draw[arr] (3.72,1.92)--(6.35,2.65)
node[midway,above,sloped] {$\mathcal P_{2n+2}$};
\draw[arr] (3.72,1.32)--(6.20,1.02)
node[midway,below,sloped] {$E_{2n+2}$};
\draw[arr] (3.72,-1.32)--(6.20,-1.02)
node[midway,above,sloped] {$E_{2n+2}$};
\draw[arr] (3.72,-1.92)--(6.35,-2.65)
node[midway,below,sloped] {$\mathcal P_{2n+2}$};
\end{tikzpicture}
}

\caption{{One step of rotation and stretching. The outer
grandchildren belong to $\mathcal P_{2n+2}$ and the inner
grandchildren belong to $E_{2n+2}$.}}
\label{fig:step}\end{figure}

\subsection{Iteration and scalar identities}
For $0\le m<N-1$, set
\begin{equation}\label{def-delta}
\delta_m=\delta_0\ell^{-m},\qquad \theta_m=\arctan\delta_m.
\end{equation}
Iteration and scalar identities are defined inductively. Fix $0\le n<N-1$ and $I\in\mathcal P_{2n}$.
Assume that $$X_I=x_{a,n}{\bf a}_I{\bf a}_I^*+x_{b,n}{\bf b}_I{\bf b}_I^*.$$ Let $\Psi_I$ be the angle from the ordered orthonormal basis
$({\bf a}_{I_0},{\bf b}_{I_0})$ to $({\bf a}_I,{\bf b}_I)$, with
$\Psi_{I_0}=0$. Thus
\[\begin{aligned}
{\bf a}_I=\cos{\Psi_I}\,{\bf a}_{I_0}+\sin{\Psi_I}\,{\bf b}_{I_0},\qquad {\bf b}_I=-\sin{\Psi_I}\,{\bf a}_{I_0}+\cos{\Psi_I}\,{\bf b}_{I_0}.
\end{aligned}
\]

Apply the rotations from Subsection~\ref{rotation} to the children $I_\pm\in\mathcal P_{2n+1}$ to obtain
$$X_{I_\pm}=x_{a,n}{\bf a}_{I_\pm}{\bf a}_{I_\pm}^*+x_{b,n}{\bf b}_{I_\pm}{\bf b}_{I_\pm}^*,$$
where
\begin{align*}
{\bf a}_{I_\pm}&=\cos\theta_n\,{\bf a}_I\pm\sin\theta_n\,{\bf b}_I=\cos\Psi_{I_\pm}\,{\bf a}_{I_0}+\sin\Psi_{I_\pm}\,{\bf b}_{I_0},\\
{\bf b}_{I_\pm}&=\mp\sin\theta_n\,{\bf a}_I+\cos\theta_n\,{\bf b}_I=-\sin\Psi_{I_\pm}\,{\bf a}_{I_0}+\cos\Psi_{I_\pm}\,{\bf b}_{I_0},\\
\Psi_{I_\pm}&=\Psi_I\pm\theta_n.
\end{align*}

Next apply the stretching step. Every grandchild $J$ uses the same
ordered orthonormal basis as its parent. Hence
${\bf a}_J={\bf a}_{\tilde J}$, ${\bf b}_J={\bf b}_{\tilde J}$, and
$\Psi_J=\Psi_{\tilde J}$. If $0\le n<N-2$ and
$J\in\mathcal P_{2n+2}$, then we set
$$X_J=x_{a,n+1}{\bf a}_J{\bf a}_J^*+x_{b,n+1}{\bf b}_J{\bf b}_J^*.$$
If $0\le n<N-1$ and $J\in E_{2n+2}$, then we set
$$X_J^\#=x_{a,n+1}^\#{\bf a}_J{\bf a}_J^*+x_{b,n+1}^\#{\bf b}_J{\bf b}_J^*,$$
where the scalar values satisfy
\begin{align*}
&{x_{a,n+1}}=r_a^{-(p-1)}x_{a,n}, &{x_{a,n+1}^\#}&=\bigl(2-r_a^{-(p-1)}\bigr)x_{a,n},
\\
&{x_{b,n+1}}=r_b\,x_{b,n},&	{x_{b,n+1}^\#}&=(2-r_b)\,x_{b,n}.
\end{align*}

At the last step, the outer grandchildren are placed in $E_{2N}$ and
use $x_{a,N-1},x_{b,N-1}$. Every dyadic descendant of an interval in
$E$ uses the same ordered orthonormal basis and the same angle.

The scalar values and angles satisfy the following identities.
\begin{proposition}\label{W-pro}
\begin{enumerate}[\rm (i)]

\item For $0\le n\le N-1$, we have
\begin{equation*}
x_{a,n}=x_{a,0}r_a^{-(p-1)n},\qquad x_{b,n}=x_{b,0}r_b^n.
\end{equation*}
For $1\le n\le N-1$, the values used on $E_{2n}$ are
\begin{equation*}
x_{a,n}^\#=x_{a,0}(2-r_a^{-(p-1)})r_a^{-(p-1)(n-1)},\qquad
x_{b,n}^\#=x_{b,0}(2-r_b)r_b^{n-1}.
\end{equation*}

\item For $0\le m<N-1$,
$$\delta_m=\left(\frac{x_{a,m}}{x_{b,m}}\right)^{1/p}=r_b^{-m/p}r_a^{-(p-1)m/p}\delta_0=\ell^{-m}\delta_0.$$
\item
Let $I\in\mathcal P_{2n}$ with $0\le n\le N-2$, or $I\in E_{2n}$ with $1\le n\le N-1$. For $0\le m<n$, let $\sigma_m=1$ if the $(m+1)$st pair in the address of $I$ is $(+,+)$ or $(+,-)$, and let $\sigma_m=-1$ otherwise. Then
$$\Psi_I=\sum_{m=0}^{n-1}\sigma_m\theta_m,\qquad |\Psi_I|\le\sum_{m=0}^{n-1}\theta_m\lesssim_p\theta_0.$$
For $I\in E_{2N}$, the same formula holds with $n=N-1$, since these intervals have undergone $N-1$ rotations. For an odd-generation interval $I_\pm\in\mathcal P_{2n+1}$, the angle is $\Psi_{I_\pm}=\Psi_I\pm\theta_n$.
Moreover, if $I\in\mathcal P_{2n}\cup\mathcal P_{2n+1}$, $0\le n\le N-2$, and $J\in\mathcal D$ satisfies $J\subseteq I$, then
$$|\Psi_J-\Psi_I|\le\sum_{m=n}^{N-2}\theta_m\lesssim_p\theta_n\lesssim\delta_n.$$
If $I\in E$ and $J\subseteq I$, then $\Psi_J=\Psi_I$.

\item For $i\in\{a,b\}$, define $y_{i,n}=(Q/x_{i,n})^{1/(p-1)}$ for $0\le n\le N-1$, and $y_{i,n}^\#=(x_{i,n}^\#)^{-1/(p-1)}$ for $1\le n\le N-1$. Then, for $0\le n\le N-1$,

$$
\begin{aligned}
y_{a,n}=y_{a,0}r_a^{n},
\quad
y_{b,n}=y_{b,0}r_b^{-n/(p-1)}
\end{aligned}$$
and, for $1\le n\le N-1$,
\begin{equation*}
\begin{aligned}
y_{a,n}^\#&=(2-r_a)y_{a,0}r_a^{n-1},
\quad
y_{b,n}^\#=(2-r_b^{-1/(p-1)})y_{b,0}r_b^{-{(n-1)}/(p-1)}.
\end{aligned}
\end{equation*}

For $1\le n\le N-1$, the mean identities are
\begin{equation*}
\frac{x_{i,n}+x_{i,n}^{\#}}{2}=x_{i,n-1},
\qquad
\frac{y_{i,n}+y_{i,n}^{\#}}{2}=y_{i,n-1}.
\end{equation*}
The product identities are
\begin{equation*}
\begin{aligned}
x_{i,n}(y_{i,n})^{p-1}&=Q &&(0\le n\le N-1),\\
x_{i,n}^{\#}(y_{i,n}^{\#})^{p-1}&=1 &&(1\le n\le N-1).
\end{aligned}
\end{equation*}

\end{enumerate}
\end{proposition}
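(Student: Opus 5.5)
The proof is a bookkeeping argument by induction on the generation index $n$. Each part follows from the recursive definitions in the iteration subsection together with the defining equations \eqref{eq:rb} and \eqref{eq:def-d}. I would treat the four items in order, since (ii) uses (i) and (iv) uses (i) together with \eqref{eq:rb}.

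For (i), the recursions $x_{a,n+1}=r_a^{-(p-1)}x_{a,n}$ and $x_{b,n+1}=r_bx_{b,n}$ iterate to the closed forms immediately. For the $\#$-values, $x^\#_{a,n}=(2-r_a^{-(p-1)})x_{a,n-1}$, and substituting the closed form of $x_{a,n-1}$ gives the stated formula; the $b$-case is identical. For (ii), divide the closed forms to get
\[
\frac{x_{a,m}}{x_{b,m}}=\delta_0^p\bigl(r_a^{p-1}r_b\bigr)^{-m}=\delta_0^p\ell^{-pm},
\]
using $x_{a,0}/x_{b,0}=\delta_0^p$ and $\ell^p=r_a^{p-1}r_b$. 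Taking $p$-th roots matches the definition \eqref{def-delta}.

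For (iii), I would induct along the address of $I$. Each pass from $\mathcal P_{2m}$ to $\mathcal P_{2m+1}$ adds $\pm\theta_m$ to the angle, and the sign is the first symbol of the $(m+1)$st pair. Stretching leaves the angle unchanged, so $\Psi_I=\sum\sigma_m\theta_m$. For the bound, use $\theta_m\le\delta_m=\delta_0\ell^{-m}$ with $\ell\ge3/2$. This gives
\[
\sum_{m\ge n}\theta_m\le \frac{\delta_n}{1-\ell^{-1}}\le 3\delta_n,
\]
and since $\theta_n\asymp\delta_n$ for small $\delta_n$, we get $\sum_{m\ge n}\theta_m\lesssim\theta_n$. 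The estimate for $J\subseteq I$ follows because the rotations between $I$ and $J$ have indices at least $n$ and at most $N-2$; here $I\in\mathcal P_{2n+1}$ needs a little care, since the rotation $\theta_n$ has already been applied. Descendants of intervals in $E$ inherit the basis by definition, so $\Psi_J=\Psi_I$ there.

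For (iv), set $y_{i,0}=(Q/x_{i,0})^{1/(p-1)}$ and compute directly: $y_{a,n}=(Q/x_{a,0})^{1/(p-1)}r_a^{n}$, and similarly for $b$. The $\#$-values are where \eqref{eq:rb} enters. Raising the first equation of \eqref{eq:rb} to the power $-1/(p-1)$ gives
\[
(2-r_a^{-(p-1)})^{-1/(p-1)}=Q^{1/(p-1)}(2-r_a),
\]
so $y^\#_{a,n}=(x^\#_{a,n})^{-1/(p-1)}=(2-r_a)y_{a,0}r_a^{n-1}$. The analogous manipulation of the second equation gives $y^\#_{b,n}$. The mean identities are then linear checks: $r_a^{-(p-1)}+(2-r_a^{-(p-1)})=2$ for the $x$'s and $r_a+(2-r_a)=2$ for the $y$'s, with the analogous checks for $b$. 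The product identities hold by definition.

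The only delicate point is keeping the normalisations consistent: $y_{i,n}$ carries the factor $Q$ while $y^\#_{i,n}$ does not, and that discrepancy is exactly what \eqref{eq:rb} absorbs. I expect no genuine obstacle beyond this check.
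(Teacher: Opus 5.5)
Your proposal is correct and follows essentially the same route as the paper's proof. You iterate the stretching recursions, divide the closed forms using $\ell^p=r_a^{p-1}r_b$, bound the angle sums by the geometric series $\sum_{m\ge n}\delta_0\ell^{-m}\le 3\delta_n$ with $\ell\ge3/2$, and obtain the $\#$ dual values by raising \eqref{eq:rb} to the power $-1/(p-1)$; the paper does the same, only more tersely.
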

\begin{proof}
Item (i) follows by iterating the stretching relations. Each rotation
adds $\pm\theta_m$ to the total angle. Stretching does not change the
ordered orthonormal basis. Since $\theta_m\le\delta_0\ell^{-m}$ and
$\ell\ge3/2$,
\[
\sum_{m=n}^{N-2}\theta_m\le\frac{\ell}{\ell-1}\delta_n\le3\delta_n\lesssim\theta_n.
\]
This proves (ii) and (iii). The formulas for $y_{i,n}$ follow from their definitions and (i).
Those for $y_{i,n}^\#$ follow from \eqref{eq:rb}. Substitution gives the mean and product identities in (iv).
\end{proof}

Finally, on $E_{2N}\subseteq\mathcal D_{2N-2}$, set, for each $I\in E_{2N}$,
$$X_{I}^\#=x_{a,N-1}{\bf a}_{I}{\bf a}_I^*+x_{b,N-1}{\bf b}_I{\bf b}_I^*,$$
where ${\bf a}_I={\bf a}_{\tilde I}$, ${\bf b}_I={\bf b}_{\tilde I}$, and
\begin{equation*}
\begin{aligned}
x_{a,N-1}&=x_{a,0}r_a^{-(p-1)(N-1)},
\quad
x_{b,N-1}=x_{b,0}r_b^{N-1 }.
\end{aligned}
\end{equation*}

\begin{remark}\label{rem:scalar-means}
For an integrable vector or matrix function $f$, write
$\langle f\rangle_I=|I|^{-1}\int_I f$.
The matrices $X_I$ assigned to intervals in $\mathcal P$ need not equal
$\langle W_N\rangle_I$. The construction preserves the scalar means
of the moving eigenvalues. The estimates in a fixed orthonormal basis below account
for the rotations. The values used to construct the test function in
Section~\ref{sec:test-function} are its actual conditional averages.
\end{remark}
\subsection{Construction of \texorpdfstring{$W$}{W}}\label{sec:finite-truncation-eigenvalues}
Define $W_N$ on $I_0$ by
\begin{align*}
W_N&=\sum_{n=1}^{N-1}\sum_{I\in E_{2n}}\bigl(x_{a,n}^\#{\bf a}_I{\bf a}_I^*+x_{b,n}^\#{\bf b}_I{\bf b}_I^*\bigr)\chi_I\\
&\quad+\sum_{J\in E_{2N}}\bigl(x_{a,N-1}{\bf a}_J{\bf a}_J^*+x_{b,N-1}{\bf b}_J{\bf b}_J^*\bigr)\chi_J\\
&=\sum_{I\in E}X_I^\#\chi_I.
\end{align*}
Consequently,
\begin{align*}
W_N^{-1/(p-1)}
&=\sum_{n=1}^{N-1}\sum_{I\in E_{2n}}\bigl(y_{a,n}^\#{\bf a}_I{\bf a}_I^*+y_{b,n}^\#{\bf b}_I{\bf b}_I^*\bigr)\chi_I\\
&\quad+Q^{-1/(p-1)}\sum_{J\in E_{2N}}\bigl(y_{a,N-1}{\bf a}_J{\bf a}_J^*+y_{b,N-1}{\bf b}_J{\bf b}_J^*\bigr)\chi_J.
\end{align*}

For each  $i\in\{a,b\}$ define, for almost every
$x\in I_0$,
\begin{equation*}
\Lambda_i(x)=
\begin{cases}
x_{i,k}^\#,
&x\in K\text{ for some } K
\in E_{2k},\quad {1\le k<N},\\
x_{i,N-1},
&x\in K\text{ for some }K\in E_{2N}\subset \mathcal D_{2N-2}.
\end{cases}
\end{equation*}
Hence,
\begin{equation*}
\Lambda_i(x)^{-1/(p-1)}=
\begin{cases}
y_{i,k}^\#,
&x\in K\text{ for some } K
\in E_{2k},\quad {1\le k<N},\\
{Q^{-1/(p-1)}}y_{i,N-1},
&x\in K\text{ for some }K\in E_{2N}\subset \mathcal D_{2N-2}.
\end{cases}
\end{equation*}

For $x\in I_0$, let $L(x)$ denote the unique interval in $E$
containing $x$, and set $n(x)=n$ when $L(x)\in E_{2n}$. Thus
$n(x)=N$ when $L(x)\in E_{2N}$, although the corresponding scalar
values have index $N-1$. Then
\begin{equation}
W_N(x)=\Lambda_a(x){\bf a}_{L(x)}{\bf a}_{L(x)}^*
+\Lambda_b(x){\bf b}_{L(x)}{\bf b}_{L(x)}^*
\label{eq:truncation-pointwise-spectral-form}
\end{equation}
and
\begin{equation*}
W_N(x)^{-1/(p-1)}=\Lambda_a(x)^{-1/(p-1)}{\bf a}_{L(x)}{\bf a}_{L(x)}^*
+\Lambda_b(x)^{-1/(p-1)}{\bf b}_{L(x)}{\bf b}_{L(x)}^*.
\end{equation*}
\begin{lemma}\label{lambda}Let $0\le n<N-1$ and $I\in\mathcal P_{2n}\cup\mathcal P_{2n+1}$. We have
\begin{equation}
\frac1{|I|}\int_I\Lambda_i(x)\,dx
=x_{i,n},\qquad i\in\{a,b\}.
\label{eq:truncation-primal-moment}
\end{equation}
\begin{equation}\label{dual-id}
\frac1{|I|}\int_I(\Lambda_i(x))^{-1/(p-1)}\,dx
\leq
y_{i,n},\qquad i\in\{a,b\}.
\end{equation}

\end{lemma}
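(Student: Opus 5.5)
Both estimates will follow from a backward induction on $n$ (downward from the last level $n=N-2$), driven by the mean identities in Proposition~\ref{W-pro}(iv) and the partition structure in Proposition~\ref{pro-1}(ii).

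\textbf{Reduction to even intervals.} First I will handle $I\in\mathcal P_{2n+1}$ by reducing it to the even case. If $I=K_+$ with $K\in\mathcal P_{2n}$, then $I=K_{++}\sqcup K_{+-}$ with the two halves of equal length. Here $K_{+-}\in E_{2n+2}$ carries the constant value $x^\#_{i,n+1}$. The other half $K_{++}$ lies in $\mathcal P_{2n+2}$ if $n<N-2$, and in $E_{2N}$ (constant value $x_{i,N-1}$) if $n=N-2$. So once the statement is known for $K_{++}$ at level $n+1$, or trivially in the terminal case, we get
\[
\langle\Lambda_i\rangle_I=\tfrac12\bigl(x_{i,n+1}+x^\#_{i,n+1}\bigr)=x_{i,n}
\]
by the mean identity. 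The case $I=K_-$ is symmetric. For the even case $I\in\mathcal P_{2n}$, I average over the two children $I_\pm\in\mathcal P_{2n+1}$, each of which has average $x_{i,n}$. So the even and odd cases are proved together, as one downward induction starting from $n=N-2$.

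\textbf{Primal moment \eqref{eq:truncation-primal-moment}.} This is an exact identity. The only thing to check is the terminal level, where $E_{2N}$ uses the unstarred values $x_{i,N-1}$; the mean identity with $n=N-1$ is exactly what is needed there. Alternatively, I can sum directly using Proposition~\ref{pro-1}(ii) and the geometric formulas in Proposition~\ref{W-pro}(i), as a consistency check.

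\textbf{Dual estimate \eqref{dual-id}.} I run the same induction. The $E_{2k}$ pieces contribute exactly $y^\#_{i,k}$. The terminal pieces in $E_{2N}$ contribute $Q^{-1/(p-1)}y_{i,N-1}$, which is at most $y_{i,N-1}$ since $Q>1$. At the last level this gives
\[
\tfrac12\bigl(Q^{-1/(p-1)}y_{i,N-1}+y^\#_{i,N-1}\bigr)\le\tfrac12\bigl(y_{i,N-1}+y^\#_{i,N-1}\bigr)=y_{i,N-2}.
\]
Each earlier step then uses the inductive hypothesis $\langle\Lambda_i^{-1/(p-1)}\rangle_{K_{++}}\le y_{i,n+1}$ together with the dual mean identity $\frac12(y_{i,n+1}+y^\#_{i,n+1})=y_{i,n}$.

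\textbf{Main obstacle.} The only delicate point is bookkeeping at the last generation, since the family $E_{2N}$ sits in generation $2N-2$ and uses index $N-1$. In particular, for $n=N-2$ one must correctly identify $K_{++}$ as an $E_{2N}$ interval. This is also exactly where the identity in \eqref{dual-id} degrades to an inequality.
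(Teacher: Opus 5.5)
Your proof is correct and is essentially the paper's argument. The paper sums over the partition of $I$ from Proposition~\ref{pro-1}(ii) (fraction $2^{-(k-n)}$ on $E_{2k}$, fraction $2^{-(N-n-1)}$ on $E_{2N}$) and telescopes with $x_{i,k}^\#=2x_{i,k-1}-x_{i,k}$ and its $y$-analogue; this is your generation-by-generation backward induction written in closed form, with the same single loss $Q^{-1/(p-1)}\le1$ on $E_{2N}$ producing the inequality in \eqref{dual-id}.
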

\begin{proof}
By Proposition~\ref{pro-1}(ii), the intervals in $E$ contained in
$I$ partition $I$. Those in $E_{2k}$ occupy a fraction
$2^{-(k-n)}$ for $n<k<N$, and those in $E_{2N}$ occupy a fraction
$2^{-(N-n-1)}$. Hence
\[
\frac1{|I|}\int_I\Lambda_i(x)\,dx
=\sum_{k=n+1}^{N-1}2^{-(k-n)}x_{i,k}^\#+2^{-(N-n-1)}x_{i,N-1}
=x_{i,n}.
\]
The last equality follows by telescoping the identities $x_{i,k}^\#=2x_{i,k-1}-x_{i,k}$ from Proposition~\ref{W-pro}(iv). Similarly,
\begin{align*}
\frac1{|I|}\int_I\Lambda_i(x)^{-1/(p-1)}\,dx
&=\sum_{k=n+1}^{N-1}2^{-(k-n)}y_{i,k}^\#
+2^{-(N-n-1)}Q^{-1/(p-1)}y_{i,N-1}\\
&\le\sum_{k=n+1}^{N-1}2^{-(k-n)}y_{i,k}^\#
+2^{-(N-n-1)}y_{i,N-1}=y_{i,n},
\end{align*}
where we used $Q>1$ and $y_{i,k}^\#=2y_{i,k-1}-y_{i,k}$.
\end{proof}

\subsection{The dyadic \texorpdfstring{$A_p$}{Ap} estimate}

The scalar mean identities do not directly give averages of the
matrices, since the eigenvectors change between generations. We
therefore estimate the coordinates of a fixed vector in each of the
orthonormal bases occurring below an interval. The bound on the total
rotation and the scalar moment estimates then control the two
integrals in the matrix $A_p$ condition. This also explains why the
estimate can be independent of the number of generations.

For an interval $I\in\mathcal D$, define
$$
\Phi_{p,I}(W_N)
:=\frac1{|I|}\int_I
\left(
\frac1{|I|}\int_I
\|W_N(x)^{1/p}W_N(y)^{-1/p}\|_{\mathrm{op}}^q\,dy
\right)^{p/q}dx.$$
We then define the dyadic matrix $A_p$ characteristic as follows: $$[W_N]_{A_p}^{\mathrm{dy}}
:=\sup_{I\in\mathcal D}\Phi_{p,I}(W_N).
$$

\begin{proposition}[Uniform dyadic \texorpdfstring{$A_p$}{Ap} estimate]
\label{lem:dyadic-Ap-upper}
Let $p>2$ and let $Q$ be sufficiently large. For every integer $N\ge2$, the finite
weight $W_N$ constructed in
Subsection~\ref{sec:finite-truncation-eigenvalues} satisfies
\begin{equation}
[W_N]_{A_p}^{\mathrm{dy}}\lesssim_p Q
\label{eq:dyadic-Ap-upper}
\end{equation}
with an implicit constant independent of $N$ and $Q$.
\end{proposition}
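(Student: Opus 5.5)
By Proposition~\ref{pro-1}(iii), it suffices to consider two kinds of dyadic intervals $I$. If $I$ is contained in some $J\in E$, then $W_N$ is constant on $I$, so $\Phi_{p,I}(W_N)=1$. Hence the main case is $I\in\mathcal P_{2n}\cup\mathcal P_{2n+1}$ with $0\le n\le N-2$.

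For such $I$, fix the orthonormal basis $(\mathbf a_I,\mathbf b_I)$. For $x,y\in I$ we have, by Proposition~\ref{W-pro}(iii), $|\Psi_{L(x)}-\Psi_I|\lesssim\delta_n$ and $|\Psi_{L(y)}-\Psi_I|\lesssim\delta_n$. We will bound the operator norm through the entries of $W_N(x)^{1/p}W_N(y)^{-1/p}$ in this fixed basis. Writing $\phi=\Psi_{L(x)}-\Psi_I$ and $\psi=\Psi_{L(y)}-\Psi_I$, we get
\[
\|W_N(x)^{1/p}W_N(y)^{-1/p}\|_{\op}\lesssim \sum_{i,j\in\{a,b\}}\Lambda_i(x)^{1/p}\Lambda_j(y)^{-1/p}\,|c_{ij}(x,y)|,
\]
where $c_{ii}=O(1)$ and $c_{ab},c_{ba}=O(|\phi|+|\psi|)=O(\delta_n)$.

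The diagonal terms are handled by the scalar estimates. Raising to the power $q$ and averaging in $y$, then raising to $p/q=p-1$ and averaging in $x$, Hölder's and Jensen's inequalities together with Lemma~\ref{lambda} give
\[
\avg_I\Lambda_i\,\Bigl(\avg_I\Lambda_i^{-1/(p-1)}\Bigr)^{p-1}\le x_{i,n}y_{i,n}^{p-1}=Q,
\]
using Proposition~\ref{W-pro}(iv). Since $p>2$ we have $q<2$, so the power $(\cdot)^{p-1}$ is convex, and the sum over the four $(i,j)$ terms costs only a constant factor.

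The cross terms require the most care. The $(b,a)$ term is
\[
\avg_I\Lambda_b\Bigl(\avg_I\Lambda_a^{-1/(p-1)}\Bigr)^{p-1}\delta_n^p\le x_{b,n}y_{a,n}^{p-1}\delta_n^p .
\]
By Proposition~\ref{W-pro}(ii), $\delta_n^p=x_{a,n}/x_{b,n}$, so this equals $x_{a,n}y_{a,n}^{p-1}=Q$. The $(a,b)$ term is $x_{a,n}y_{b,n}^{p-1}\delta_n^p\le Q\,\delta_n^{2p}x_{b,n}^{-1}\cdot x_{b,n}=Q\delta_n^{2p}\le Q$; here we use $y_{b,n}^{p-1}=Q/x_{b,n}$ and $x_{a,n}/x_{b,n}=\delta_n^p$.

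The main obstacle is that the off-diagonal coefficient is not uniformly $O(\delta_n)$ times a separable function of $x$ and $y$. In fact $c_{ba}(x,y)\approx\sin(\phi-\psi)$, and after deep rotations inside $I$ this could be as large as $\sum_{m\ge n}\theta_m$. Proposition~\ref{W-pro}(iii) bounds this sum by $\lesssim\delta_n$, which settles the issue, but the bound must be kept uniform in $N$.

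A second concern is that Lemma~\ref{lambda} only covers $n<N-1$, while intervals of $\mathcal P_{2N-3}$ sit at the last step. For these the same computation works with a two-level average, computed directly from $x^\#_{i,N-1}$ and $x_{i,N-1}$. Taking the supremum over all dyadic $I$ then yields \eqref{eq:dyadic-Ap-upper}, with a constant depending only on $p$.
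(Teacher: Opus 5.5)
Your argument is correct. It reaches the bound by a different organization than the paper. You expand
\[
W_N(x)^{1/p}W_N(y)^{-1/p}=\sum_{i,j\in\{a,b\}}\Lambda_i(x)^{1/p}\Lambda_j(y)^{-1/p}\langle \mathbf e_i(x),\mathbf e_j(y)\rangle\,\mathbf e_i(x)\mathbf e_j(y)^*
\]
in the two pointwise eigenframes, with $\mathbf e_a(x)=\mathbf a_{L(x)}$ and $\mathbf e_b(x)=\mathbf b_{L(x)}$. You bound the off-diagonal coefficients uniformly by the relative angle, which is $\lesssim\delta_n$ by Proposition~\ref{W-pro}(iii). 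You then check the four resulting separable terms against Lemma~\ref{lambda}.

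The paper never pairs the two eigenframes directly. It fixes the frame $(\mathbf a_I,\mathbf b_I)$, introduces the diagonal matrices $V_I,V_I^\sharp$, and factors
\[
W_N(x)^{1/p}W_N(y)^{-1/p}=\bigl(W_N(x)^{1/p}V_I^{-1}\bigr)\bigl(V_IV_I^\sharp\bigr)\bigl((V_I^\sharp)^{-1}W_N(y)^{-1/p}\bigr).
\]
The two outer factors are bounded by one-variable estimates from coordinate leakage (Lemmas~\ref{lem:coordinate-leakage} and \ref{lem:fixed-frame-vector-estimates}, Corollary~\ref{cor:dyadic-reducing-estimates}). The middle factor satisfies $\|V_IV_I^\sharp\|_{\op}^p=Q$.

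The ingredients are identical: the angle bound, the scalar moment bounds, and the calibration $\delta_n^p x_{b,n}=x_{a,n}$, $x_{i,n}y_{i,n}^{p-1}=Q$.

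\begin{itemize}
\item Your version shows explicitly that the $(b,a)$ cross term has the same order $Q$ as the diagonal terms. So a rotation angle of order $\delta_n$ is exactly what the estimate can absorb. In the paper this fact is hidden in the step $x_{b,n}\delta_n^p|u|^p=x_{a,n}|u|^p$.
\item The paper's factorization yields reusable one-variable bounds. After taking adjoints, they apply verbatim to the dual weight $W=U^{-(p-1)}$ in the case $1<p<2$. With your approach you would redo the four-term check there, though it goes through the same way.
\end{itemize}

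Your ``second concern'' is not an issue. Lemma~\ref{lambda} is stated for $I\in\mathcal P_{2n}\cup\mathcal P_{2n+1}$ with $0\le n<N-1$. This range includes $n=N-2$, that is, $\mathcal P_{2N-4}\cup\mathcal P_{2N-3}$. Its proof at $n=N-2$ is exactly the two-level average you describe.

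Likewise, the diagonal-term bound needs no H\"older step, and $q<2$ plays no role. Once the coefficients are bounded uniformly, you only use convexity of $t\mapsto t^{q}$ and $t\mapsto t^{p-1}$ to split finite sums.
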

\begin{lemma}
\label{lem:coordinate-leakage}
Let $0\le n<N-1$ and
$I\in\mathcal P_{2n}\cup\mathcal P_{2n+1}$. Let $J\in E$ satisfy
$J\subseteq I$. For every fixed vector
${\bf e}=u{\bf a}_I+v{\bf b}_I$, define $c_a(J)$ and $c_b(J)$ by
\begin{equation*}
{\bf e}=u{\bf a}_{I}+v{\bf b}_{I}=c_a(J){\bf a}_{J}+c_b(J){\bf b}_{J}.
\end{equation*}
Then, for each $s\in\{p,q\}$,
\[
|c_a(J)|^s\lesssim_p |u|^s+\delta_n^s|v|^s,
\qquad
|c_b(J)|^s\lesssim_p |v|^s+\delta_n^s|u|^s.
\]
\end{lemma}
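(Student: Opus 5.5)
The plan is to reduce everything to the relative angle between the two orthonormal bases. By construction, $({\bf a}_J,{\bf b}_J)$ is obtained from $({\bf a}_I,{\bf b}_I)$ by a rotation through the angle $\phi:=\Psi_J-\Psi_I$. Both frames are rotations of $({\bf a}_{I_0},{\bf b}_{I_0})$ by $\Psi_I$ and $\Psi_J$, respectively, and planar rotations commute. Hence
\[
{\bf a}_J=\cos\phi\,{\bf a}_I+\sin\phi\,{\bf b}_I,\qquad {\bf b}_J=-\sin\phi\,{\bf a}_I+\cos\phi\,{\bf b}_I .
\]
Taking inner products of ${\bf e}=u{\bf a}_I+v{\bf b}_I$ with ${\bf a}_J$ and ${\bf b}_J$ gives the explicit formulas
\[
c_a(J)=u\cos\phi+v\sin\phi,\qquad c_b(J)=-u\sin\phi+v\cos\phi .
\]

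The key input is the angle bound in Proposition~\ref{W-pro}(iii). Since $I\in\mathcal P_{2n}\cup\mathcal P_{2n+1}$ with $0\le n\le N-2$ and $J\subseteq I$, that result gives
\[
|\phi|=|\Psi_J-\Psi_I|\le\sum_{m=n}^{N-2}\theta_m\lesssim_p\delta_n .
\]
Its proof in fact yields the explicit bound $|\phi|\le 3\delta_n$, using $\theta_m\le\delta_m=\delta_0\ell^{-m}$ and $\ell\ge 3/2$. Consequently $|\sin\phi|\le|\phi|\lesssim\delta_n$, and trivially $|\cos\phi|\le1$.

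Combining these, $|c_a(J)|\le|u|+C\delta_n|v|$ and $|c_b(J)|\le|v|+C\delta_n|u|$. For $s\in\{p,q\}$, the elementary inequality $(\alpha+\beta)^s\le 2^{s-1}(\alpha^s+\beta^s)$ for $\alpha,\beta\ge0$ then gives
\[
|c_a(J)|^s\le 2^{s-1}\bigl(|u|^s+C^s\delta_n^s|v|^s\bigr),
\]
and the same bound for $|c_b(J)|^s$ with $u$ and $v$ interchanged. The constants depend only on $s$, hence only on $p$ since $q=p/(p-1)$, and on the absolute constant in the angle bound. In particular they are independent of $N$, $Q$, $n$ and $J$.

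No step is a serious obstacle. The one point needing care is the first step: we must confirm that the relative rotation between the frames at $J$ and $I$ is exactly through the angle $\Psi_J-\Psi_I$, with no reflection or sign flip. This holds because every step of the construction is a rotation (the stretching step keeps the basis, and descendants of $E$-intervals inherit it), and Proposition~\ref{W-pro}(iii) covers all $J\subseteq I$, including $J\in E_{2N}$.
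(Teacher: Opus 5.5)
Your proposal is correct and follows essentially the same route as the paper: express the frame at $J$ as the frame at $I$ rotated by $\Psi_J-\Psi_I$, bound this angle by $\lesssim_p\delta_n$ via Proposition~\ref{W-pro}(iii), and then use $|\sin t|\le|t|$, $|\cos t|\le1$ and the convexity inequality $(\alpha+\beta)^s\le 2^{s-1}(\alpha^s+\beta^s)$. Your explicit check that the change of frame is a genuine rotation with no reflection is a detail the paper leaves implicit, and it is a worthwhile addition.
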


\begin{proof}
Let $\alpha_J=\Psi_J-\Psi_I$ be the signed angle from
$({\bf a}_I,{\bf b}_I)$ to $({\bf a}_J,{\bf b}_J)$. By
Proposition~\ref{W-pro}(iii), $|\alpha_J|\lesssim_p\delta_n$ for
either parity of $I$. Since
\[
c_a(J)=u\cos\alpha_J+v\sin\alpha_J,
\qquad
c_b(J)=-u\sin\alpha_J+v\cos\alpha_J,
\]
the bounds $|\sin t|\le|t|$ and $|\cos t|\le1$ give
\begin{align*}
|c_a(J)|\lesssim_p |u|+\delta_n|v|,
\qquad
|c_b(J)|\lesssim_p |v|+\delta_n|u|.
\end{align*}
Thus, for $s\in\{p,q\}$,
\begin{equation}
|c_a(J)|^s\lesssim_p |u|^s+\delta_n^s|v|^s,
\qquad
|c_b(J)|^s\lesssim_p |v|^s+\delta_n^s|u|^s.
\label{eq:section-three-coordinate-leakage}
\end{equation}
This finishes the proof of Lemma \ref{lem:coordinate-leakage}.
\end{proof}

\begin{lemma}
\label{lem:fixed-frame-vector-estimates}
Let $p>2$ and let $Q$ be sufficiently large. For every $0\le n<N-1$, every $I\in\mathcal P_{2n}\cup\mathcal P_{2n+1}$, and every vector ${\bf e}=u{\bf a}_I+v{\bf b}_I$, the following estimates hold.
\begin{align}
\frac1{|I|}\int_I
|W_N(x)^{1/p} {\bf e}|^p\,dx
&\lesssim_p x_{a,n}|u|^p+x_{b,n}|v|^p,
\label{eq:section-three-primal-fixed-frame}\\
\frac1{|I|}\int_I
|W_N(x)^{-1/p} {\bf e}|^q\,dx
&\lesssim_p y_{a,n}|u|^q+y_{b,n}|v|^q.
\label{eq:section-three-dual-fixed-frame}
\end{align}
\end{lemma}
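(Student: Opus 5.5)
We prove both estimates the same way: decompose $I$ into the intervals $J\in E$ with $J\subseteq I$, which partition $I$ by Proposition~\ref{pro-1}(ii). On each such $J$ the weight is constant and diagonal in the basis $\{{\bf a}_J,{\bf b}_J\}$, so we compute there directly. We then sum using the coordinate bounds of Lemma~\ref{lem:coordinate-leakage} and the scalar moment identities of Lemma~\ref{lambda}.

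\emph{Primal estimate.} For $x\in J$, write ${\bf e}=c_a(J){\bf a}_J+c_b(J){\bf b}_J$. By \eqref{eq:truncation-pointwise-spectral-form},
\[
|W_N(x)^{1/p}{\bf e}|^p=\bigl(\Lambda_a(x)^{2/p}c_a(J)^2+\Lambda_b(x)^{2/p}c_b(J)^2\bigr)^{p/2}
\lesssim_p \Lambda_a(x)|c_a(J)|^p+\Lambda_b(x)|c_b(J)|^p .
\]
Insert the bounds of Lemma~\ref{lem:coordinate-leakage} with $s=p$, integrate over $I$, and use \eqref{eq:truncation-primal-moment}. The left-hand side of \eqref{eq:section-three-primal-fixed-frame} is then
\[
\lesssim_p x_{a,n}\bigl(|u|^p+\delta_n^p|v|^p\bigr)+x_{b,n}\bigl(|v|^p+\delta_n^p|u|^p\bigr).
\]
The two cross terms are absorbed as follows. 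By Proposition~\ref{W-pro}(ii), $\delta_n^p=x_{a,n}/x_{b,n}$, so $x_{a,n}\delta_n^p|v|^p\le x_{b,n}|v|^p$ because $x_{a,n}\le x_{b,n}$. Likewise $x_{b,n}\delta_n^p|u|^p=x_{a,n}|u|^p$. This gives \eqref{eq:section-three-primal-fixed-frame}.

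\emph{Dual estimate.} Since $W_N^{-1/p}=(W_N^{-1/(p-1)})^{1/q}$, the same computation gives
\[
|W_N(x)^{-1/p}{\bf e}|^q\lesssim_p \Lambda_a(x)^{-1/(p-1)}|c_a(J)|^q+\Lambda_b(x)^{-1/(p-1)}|c_b(J)|^q .
\]
Apply Lemma~\ref{lem:coordinate-leakage} with $s=q$ and then the dual bound \eqref{dual-id}. This yields
\[
y_{a,n}\bigl(|u|^q+\delta_n^q|v|^q\bigr)+y_{b,n}\bigl(|v|^q+\delta_n^q|u|^q\bigr).
\]

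\emph{Absorbing the dual cross terms.} This is the main obstacle, because the dual eigenvalues are ordered the opposite way: $y_{a,n}$ is the large one. We need two comparisons.

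First, $y_{a,n}\delta_n^q\lesssim y_{b,n}$. From the definitions and Proposition~\ref{W-pro}(ii),
\[
\frac{y_{a,n}}{y_{b,n}}=\Bigl(\frac{x_{b,n}}{x_{a,n}}\Bigr)^{1/(p-1)}=\delta_n^{-p/(p-1)}=\delta_n^{-q}.
\]
Hence $y_{a,n}\delta_n^q=y_{b,n}$ exactly.

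Second, $y_{b,n}\delta_n^q\lesssim y_{a,n}$. This holds because $\delta_n\le\delta_0<1$ and $y_{b,n}\le y_{a,n}$.

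Both cross terms are therefore absorbed, proving \eqref{eq:section-three-dual-fixed-frame}. The same exact identities $\delta_n^p=x_{a,n}/x_{b,n}$ and $\delta_n^q=y_{b,n}/y_{a,n}$ are what make the leakage harmless in both estimates. The angle bound $|\Psi_J-\Psi_I|\lesssim\delta_n$ from Proposition~\ref{W-pro}(iii) holds uniformly in $N$, so all constants depend only on $p$.
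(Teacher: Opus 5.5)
Your proposal is correct and follows essentially the same route as the paper. Both arguments use the pointwise spectral form of $W_N$ on each $L(x)\in E$, the coordinate-leakage bounds of Lemma~\ref{lem:coordinate-leakage}, and the scalar moment estimates of Lemma~\ref{lambda}, and both absorb the cross terms via the exact identities $\delta_n^p x_{b,n}=x_{a,n}$ and $\delta_n^q y_{a,n}=y_{b,n}$ together with $\delta_n\le 1$.
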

\begin{proof}
For $x\in I$, apply Lemma~\ref{lem:coordinate-leakage} with $J=L(x)$. The spectral representation \eqref{eq:truncation-pointwise-spectral-form} gives
\begin{align*}
W_N(x)^{1/p}{\bf e}
&=c_a(L(x))\Lambda_a(x)^{1/p}{\bf a}_{L(x)}
+c_b(L(x))\Lambda_b(x)^{1/p}{\bf b}_{L(x)}.
\end{align*}
Orthogonality and convexity give
\begin{equation*}
|W_N(x)^{1/p} {\bf e}|^p
\lesssim_p
\Lambda_a(x)|c_a(L(x))|^p+\Lambda_b(x)|c_b(L(x))|^p.
\end{equation*}
By Proposition~\ref{W-pro}, $\delta_n\le1$, $\delta_n^p x_{b,n}=x_{a,n}$, and $\delta_n^q y_{a,n}=y_{b,n}$.
Thus \eqref{eq:section-three-coordinate-leakage} and
\eqref{eq:truncation-primal-moment} yield
\[
\begin{aligned}
\frac1{|I|}\int_I|W_N(x)^{1/p} {\bf e}|^p\,dx
&\lesssim_p
x_{a,n}\bigl(|u|^p+\delta_n^p|v|^p\bigr)
+x_{b,n}\bigl(|v|^p+\delta_n^p|u|^p\bigr)\\
&\lesssim_p x_{a,n}|u|^p+x_{b,n}|v|^p,
\end{aligned}
\]

Since $q<2$, the same spectral representation yields

\begin{equation*}
|W_N(x)^{-1/p} {\bf e}|^q
\le
(\Lambda_a(x))^{-1/(p-1)}|c_a(L(x))|^q
+(\Lambda_b(x))^{-1/(p-1)}|c_b(L(x))|^q.
\end{equation*}
Combining this with \eqref{dual-id} and \eqref{eq:section-three-coordinate-leakage}, we obtain
\[
\begin{aligned}
\frac1{|I|}\int_I|W_N(x)^{-1/p} {\bf e}|^q\,dx
&\lesssim_p
y_{a,n}\bigl(|u|^q+\delta_n^q|v|^q\bigr)
+y_{b,n}\bigl(|v|^q+\delta_n^q|u|^q\bigr)\\
&\lesssim_p y_{a,n}|u|^q+y_{b,n}|v|^q.
\end{aligned}
\]
This finishes the proof of Lemma \ref{lem:fixed-frame-vector-estimates}.
\end{proof}

For $0\le n<N-1$, fix $I\in\mathcal P_{2n}\cup\mathcal P_{2n+1}$. Set
\begin{equation}
V_I=x_{a,n}^{1/p}{\bf a}_I{\bf a}_I^*+x_{b,n}^{1/p}{\bf b}_I{\bf b}_I^*,
\qquad
V_I^\sharp=y_{a,n}^{1/q}{\bf a}_I{\bf a}_I^*+y_{b,n}^{1/q}{\bf b}_I{\bf b}_I^*.
\label{eq:section-three-reducing-matrices}
\end{equation}
These are auxiliary diagonal matrices. Only the upper estimates below
are asserted. We do not identify them with reducing operators, which
by definition give two-sided comparisons with the averaged vector norms
(see \cite[Definition 2.8]{bu-hytonen-yang-yuan-2025}).
\begin{corollary}
\label{cor:dyadic-reducing-estimates}
Let $p>2$ and let $Q$ be sufficiently large. For every $0\le n<N-1$ and $I\in\mathcal P_{2n}\cup\mathcal P_{2n+1}$,
\begin{equation}
\frac1{|I|}\int_I
\|W_N(x)^{1/p}V_I^{-1}\|_{\mathrm{op}}^p\,dx
\lesssim_p1,
\qquad
\frac1{|I|}\int_I
\|(V_I^\sharp)^{-1}W_N(y)^{-1/p}\|_{\mathrm{op}}^q\,dy
\lesssim_p1.
\label{eq:section-three-reducing-goal}
\end{equation}
\end{corollary}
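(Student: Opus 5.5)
The plan is to reduce both operator-norm integrals to the vector estimates of Lemma~\ref{lem:fixed-frame-vector-estimates}, using that we are in dimension two. For the primal estimate, write $\|W_N(x)^{1/p}V_I^{-1}\|_{\op}\le |W_N(x)^{1/p}V_I^{-1}{\bf a}_I|+|W_N(x)^{1/p}V_I^{-1}{\bf b}_I|$. This holds because for a unit vector ${\bf e}=u{\bf a}_I+v{\bf b}_I$ we have $|u|,|v|\le1$, so the triangle inequality applies. Hence
\[
\|W_N(x)^{1/p}V_I^{-1}\|_{\op}^p\lesssim_p |W_N(x)^{1/p}V_I^{-1}{\bf a}_I|^p+|W_N(x)^{1/p}V_I^{-1}{\bf b}_I|^p .
\]
Since $V_I$ is diagonal in the basis $\{{\bf a}_I,{\bf b}_I\}$, we get $V_I^{-1}{\bf a}_I=x_{a,n}^{-1/p}{\bf a}_I$ and $V_I^{-1}{\bf b}_I=x_{b,n}^{-1/p}{\bf b}_I$. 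We then apply \eqref{eq:section-three-primal-fixed-frame} twice. With $(u,v)=(x_{a,n}^{-1/p},0)$ the average of the first term is $\lesssim_p x_{a,n}\cdot x_{a,n}^{-1}=1$. With $(u,v)=(0,x_{b,n}^{-1/p})$ the average of the second term is $\lesssim_p 1$. Summing gives the first inequality in \eqref{eq:section-three-reducing-goal}.

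For the dual estimate, we first move to adjoints. Since $W_N(y)$ and $V_I^\sharp$ are self-adjoint (real symmetric), $\|(V_I^\sharp)^{-1}W_N(y)^{-1/p}\|_{\op}=\|W_N(y)^{-1/p}(V_I^\sharp)^{-1}\|_{\op}$. The same two-column bound then gives
\[
\|W_N(y)^{-1/p}(V_I^\sharp)^{-1}\|_{\op}^q\lesssim_p |W_N(y)^{-1/p}y_{a,n}^{-1/q}{\bf a}_I|^q+|W_N(y)^{-1/p}y_{b,n}^{-1/q}{\bf b}_I|^q .
\]
Now \eqref{eq:section-three-dual-fixed-frame} with $(u,v)=(y_{a,n}^{-1/q},0)$ and $(0,y_{b,n}^{-1/q})$ bounds each averaged term by $\lesssim_p y_{a,n}y_{a,n}^{-1}=1$ and $y_{b,n}y_{b,n}^{-1}=1$ respectively.

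The only point needing care is that the vector estimates must be applied with vectors expressed in the frame $\{{\bf a}_I,{\bf b}_I\}$ of the \emph{same} interval $I$. That is exactly the setting of Lemma~\ref{lem:fixed-frame-vector-estimates}, which absorbs the leakage between frames through Lemma~\ref{lem:coordinate-leakage}. So the main work has already been done there, and no real obstacle remains beyond the elementary two-dimensional operator-norm reduction and the self-adjointness used in the dual step.
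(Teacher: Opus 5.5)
Your proposal is correct and follows the paper's proof essentially step for step. For the primal factor, you bound the operator norm by the two column norms in the basis $\{{\bf a}_I,{\bf b}_I\}$ and apply Lemma~\ref{lem:fixed-frame-vector-estimates} to $x_{a,n}^{-1/p}{\bf a}_I$ and $x_{b,n}^{-1/p}{\bf b}_I$; for the dual factor, you first pass to the adjoint using the symmetry of $W_N(y)^{-1/p}$ and $V_I^\sharp$, then use the same reduction with $y_{a,n}^{-1/q}{\bf a}_I$ and $y_{b,n}^{-1/q}{\bf b}_I$.
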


\begin{proof}
Applying
\eqref{eq:section-three-primal-fixed-frame} to
$x_{a,n}^{-1/p} {\bf a}_I$ and $x_{b,n}^{-1/p} {\bf b}_I$ gives
\[
\frac1{|I|}\int_I
\left(
|W_N(x)^{1/p}V_I^{-1} {\bf a}_I|^p
+|W_N(x)^{1/p}V_I^{-1} {\bf b}_I|^p
\right)dx
\lesssim_p1.
\]
The bound
$\|S\|_{\mathrm{op}}^p\lesssim_p|S{\bf a}_I|^p+|S{\bf b}_I|^p$
proves the first estimate in \eqref{eq:section-three-reducing-goal}.

Since $W_N(y)^{-1/p}$ and $(V_I^\sharp)^{-1}$ are symmetric, taking adjoints gives
\[
\|(V_I^\sharp)^{-1}W_N(y)^{-1/p}\|_{\mathrm{op}}
=\|W_N(y)^{-1/p}(V_I^\sharp)^{-1}\|_{\mathrm{op}}.
\]
Apply \eqref{eq:section-three-dual-fixed-frame} to $y_{a,n}^{-1/q}{\bf a}_I$ and $y_{b,n}^{-1/q}{\bf b}_I$. The same bound by the two column norms gives
\begin{align*}
&\frac1{|I|}\int_I\|(V_I^\sharp)^{-1}W_N(y)^{-1/p}\|_{\mathrm{op}}^q\,dy\\
&\lesssim_p\frac1{|I|}\int_I
\Bigl(|W_N(y)^{-1/p}(V_I^\sharp)^{-1}{\bf a}_I|^q
+|W_N(y)^{-1/p}(V_I^\sharp)^{-1}{\bf b}_I|^q\Bigr)\,dy\lesssim_p1,
\end{align*}
which completes the estimate.
\end{proof}

\begin{proof}[Proof of Proposition~\ref{lem:dyadic-Ap-upper}]
Fix $I\in\mathcal D$. By Proposition~\ref{pro-1}(iii), there are three cases.
\begin{enumerate}
\item $I\subseteq J$ for some interval
$J\in E$.
\item $I\in\mathcal P_{2n}$ for some $0\le n\le N-2$.
\item $I\in\mathcal P_{2n+1}$ for some $0\le n\le N-2$.
\end{enumerate}

In Case 1, the definition of the finite truncation makes
$W_N$ constant on $I$.  Therefore
\begin{equation*}
\Phi_{p,I}(W_N)=1.
\end{equation*}

In Cases 2 and 3, the auxiliary matrices in
\eqref{eq:section-three-reducing-matrices} are diagonal in
$({\bf a}_I,{\bf b}_I)$. For every $x,y\in I$, we factor
\begin{equation*}
\begin{aligned}
W_N(x)^{1/p}W_N(y)^{-1/p}
={}&\bigl(W_N(x)^{1/p}V_I^{-1}\bigr)
(V_IV_I^\sharp)\\
&\times
\bigl((V_I^\sharp)^{-1}W_N(y)^{-1/p}\bigr).
\end{aligned}
\end{equation*}
Taking operator norms and separating the $x$- and $y$-dependent factors gives
\begin{equation}
\begin{aligned}
\Phi_{p,I}(W_N)
\le{}&\|V_IV_I^\sharp\|_{\mathrm{op}}^p
\left(
\frac1{|I|}\int_I
\|W_N(x)^{1/p}V_I^{-1}\|_{\mathrm{op}}^p\,dx
\right)\\
&\times
\left(
\frac1{|I|}\int_I
\|(V_I^\sharp)^{-1}W_N(y)^{-1/p}\|_{\mathrm{op}}^q\,dy
\right)^{p/q}.
\end{aligned}
\label{eq:section-three-roudenko-factorization}
\end{equation}
The two integral factors are bounded by
Corollary~\ref{cor:dyadic-reducing-estimates}.  Since $V_I$ and
$V_I^\sharp$ are diagonal in the same ordered orthonormal basis, and since
$p/q=p-1$, their middle factor is
\begin{equation*}
\begin{aligned}
\|V_IV_I^\sharp\|_{\mathrm{op}}^p
&=\max\{
x_{a,n}y_{a,n}^{p/q},
x_{b,n}y_{b,n}^{p/q}
\}\\
&=\max\{
x_{a,n}y_{a,n}^{p-1},
x_{b,n}y_{b,n}^{p-1}
\}
=Q.
\end{aligned}
\end{equation*}
The last equality follows from the construction.  It follows from
\eqref{eq:section-three-roudenko-factorization} and Corollary~\ref{cor:dyadic-reducing-estimates} that
\begin{equation*}
\Phi_{p,I}(W_N)\lesssim_p Q.
\end{equation*}
Taking the supremum over $I\in\mathcal D$ proves \eqref{eq:dyadic-Ap-upper}.
\end{proof}

\section{Construction of the test function}\label{sec:test-function}
In this section we construct the finite vector-valued test function ${\bf f}_N$ on the tree of Section~\ref{sec:2}.
We choose its values in the direction of the smaller eigenvalue to
keep its weighted norm small. Opposite rotations on the two children
produce a difference in the other direction, which is detected by
the Haar coefficients. The factor $1/\cos\theta_n$ below is forced
by preservation of the average at the rotation step. At the next
step, the choice of $\ell$ gives
\[
x_{a,n}\ell^{pn}=x_{a,0}r_b^n.
\]
Together with the measure $2^{-n}$ of the relevant intervals, this
reduces the norm calculation to a geometric sum with ratio $r_b/2$.
The remaining values are determined by the averaging identities.

Recall that for every dyadic interval $J\subseteq I_0=[0,1)$,
${\bf a}_J$ is the first vector in the ordered orthonormal basis
constructed in Section~\ref{sec:2}. Set
\begin{equation*}
{\bf f}_{I_0}={\bf a}_{I_0}.
\end{equation*}
Fix $0\le n\le N-2$ and $I\in\mathcal P_{2n}$. Assume that ${\bf f}_I=\ell^n{\bf a}_I$, where
\[
{\bf a}_I=\cos{\Psi_I}\,{\bf a}_{I_0}+\sin{\Psi_I}\,{\bf b}_{I_0}.
\]
For its children $I_\pm\in\mathcal P_{2n+1}$, use the ordered
orthonormal bases from Subsection~\ref{rotation}. To preserve the
averaging identity $2{\bf f}_I={\bf f}_{I_+}+{\bf f}_{I_-}$, define
\begin{equation}
{\bf f}_{I_+}=\frac{\ell^n}{\cos\theta_n}\,{\bf a}_{I_+},\qquad
{\bf f}_{I_-}=\frac{\ell^n}{\cos\theta_n}\,{\bf a}_{I_-}.
\label{eq:test-generation-rotation-values}
\end{equation}
Since ${\bf a}_{I_+}+{\bf a}_{I_-}=2\cos\theta_n\,{\bf a}_I$, these values have the required average.
For the stretching step, set
\begin{equation}
t_n=\frac{2}{\cos\theta_n}-\ell,\qquad 0\le n\le N-2,
\label{eq:test-final-value}
\end{equation}
and assign
\begin{equation}
\begin{aligned}
{{\bf f}_{I_{++}}}=\ell^{n+1}{\bf a}_{I_{++}},& \qquad {{\bf f}_{I_{+-}}}=\ell^{n}t_n\,{\bf a}_{I_{+-}},\\
{{\bf f}_{I_{-+}}}=\ell^{n}t_n\,{\bf a}_{I_{-+}},&\qquad  {{\bf f}_{I_{--}}}=\ell^{n+1}{\bf a}_{I_{--}}.
\end{aligned}
\label{eq:test-generation-stretch-values}
\end{equation}
The grandchildren use the same ordered orthonormal bases as their
parents. Thus, for either sign,
\[
\frac{{\bf f}_{I_{\pm+}}+{\bf f}_{I_{\pm-}}}{2}
=\frac{\ell^n(\ell+t_n)}2\,{\bf a}_{I_\pm}
=\frac{\ell^n}{\cos\theta_n}\,{\bf a}_{I_\pm}
={\bf f}_{I_\pm}.
\]

For $n<N-2$, the outer grandchildren belong to
$\mathcal P_{2n+2}$. At $n=N-2$, they belong to $E_{2N}$ and have
value $\ell^{N-1}{\bf a}_J$, as in Section~\ref{sec:2}. The inner
grandchildren belong to $E_{2n+2}$ at every step. When $N=2$, all
four grandchildren belong to $E$ after the first step.
The scalar amplitudes $f_J$ in ${\bf f}_J=f_J{\bf a}_J$ are therefore
\[
f_J=\begin{cases}
\ell^{n-1}t_{n-1},&J\in E_{2n},\quad 1\le n\le N-1,\\
\ell^{N-1},&J\in E_{2N},\\
\ell^n,&J\in\mathcal P_{2n},\quad 0\le n\le N-2,\\
\ell^n/\cos\theta_n,&J\in\mathcal P_{2n+1},\quad 0\le n\le N-2.
\end{cases}
\]

Using the interval $L(x)\in E$ defined in Section~\ref{sec:2}, set,
for $x\in I_0$,
\begin{equation}
\begin{aligned}
{\bf f}_N(x)&=\sum_{J\in E}f_J{\bf a}_J\one_J(x)=f_{L(x)}{\bf a}_{L(x)}\\
&={\sum_{n=1}^{N-1}}
\sum_{J\in E_{2n}}
\ell^{n-1}t_{n-1}
{\bf a}_{J}\one_{J}(x)
+
\sum_{J\in E_{2N}}
\ell^{N-1}{\bf a}_{J}\one_J(x).
\end{aligned}
\label{eq:explicit-test-function}
\end{equation}
The averaging identities at the rotation and stretching steps imply,
by induction from the intervals in $E$ towards the root, that
\[
\langle{\bf f}_N\rangle_J=\frac1{|J|}\int_J{\bf f}_N(x)\,dx={\bf f}_J=f_J{\bf a}_J
\]
for every interval $J$ in $\mathcal P\cup E$ and every odd-generation
interval used in the construction. In particular,
$\langle{\bf f}_N\rangle_{I_0}={\bf a}_{I_0}$, so ${\bf f}_N\ne0$.
With normalized Lebesgue measure on $I_0$, the sigma-algebra
$\mathcal F_m$ generated by $\mathcal D_m$ gives the finite
martingale $\mathbb E({\bf f}_N\mid\mathcal F_m)$. Its value on
$J\in\mathcal D_m$ is $\langle{\bf f}_N\rangle_J$. Thus the
term martingale refers to these conditional averages. The function
at the last generation is ${\bf f}_N$.
\begin{lemma}[Values on the sets $E_{2n}$]
\label{lem:test-final-amplitude}Assume $0<\delta_0\le\frac12(2-\ell)$.
Uniformly for \(0\le n\le N-2\),
\begin{equation}
t_n\asympp 2-\ell\asympp Q^{-1/(p-1)}.
\label{eq:t-asymptotics}
\end{equation}
\end{lemma}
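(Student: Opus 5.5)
We need to show $t_n = 2/\cos\theta_n - \ell \asymp 2-\ell$. The plan is to write
\[
t_n=(2-\ell)+2\Bigl(\frac{1}{\cos\theta_n}-1\Bigr)
\]
and show that the second term is nonnegative and dominated by $2-\ell$.

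\emph{Lower bound.} Since $0\le\theta_n<\pi/2$, we have $1/\cos\theta_n\ge1$. Hence $t_n\ge 2-\ell$ directly, uniformly in $n$.

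\emph{Upper bound.} Recall that $\theta_n=\arctan\delta_n$. Therefore
\[
\frac{1}{\cos\theta_n}=\sqrt{1+\delta_n^2},
\qquad\text{so}\qquad
\frac{1}{\cos\theta_n}-1=\frac{\delta_n^2}{\sqrt{1+\delta_n^2}+1}\le \frac{\delta_n^2}{2}.
\]
By \eqref{def-delta}, $\delta_n=\delta_0\ell^{-n}\le\delta_0$, since $\ell\ge3/2>1$. The hypothesis $\delta_0\le\frac12(2-\ell)$ and the fact $2-\ell\le1/2$ then give
\[
2\Bigl(\frac1{\cos\theta_n}-1\Bigr)\le\delta_0^2\le\tfrac14(2-\ell)^2\le 2-\ell .
\]
Consequently $t_n\le 2(2-\ell)$ for every $0\le n\le N-2$, with constants independent of $n$, $N$ and $Q$.

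\emph{Conclusion.} The second equivalence in \eqref{eq:t-asymptotics}, namely $2-\ell\asympp Q^{-1/(p-1)}$, is exactly \eqref{eq:d-tau-asymptotics2} of Lemma~\ref{lem:stretch-parameter-asymptotics}, which applies since $Q$ is taken sufficiently large.

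No step is a real obstacle. The only point needing care is uniformity in $n$, and this comes from the monotonicity $\delta_n\le\delta_0$. The positivity of $t_n$, which is needed for the amplitudes to make sense, also follows from the lower bound.
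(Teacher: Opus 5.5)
Your proof is correct and follows essentially the same route as the paper: write $t_n=(2-\ell)+2\delta_n^2/(\sqrt{1+\delta_n^2}+1)$, which gives the lower bound immediately, and control the correction by $\delta_n^2\le\delta_0^2\le(2-\ell)^2/4$ using $\ell\ge 3/2$. The only difference is that the paper records the sharper constant $t_n\le\frac98(2-\ell)$ where you get $2(2-\ell)$, and this does not affect the statement.
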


\begin{proof}
Since $\theta_n=\arctan\delta_n$ by \eqref{def-delta},
$(\cos\theta_n)^{-1}=\sqrt{1+\delta_n^2}$, and hence
$$
t_n
=2\sqrt{1+\delta_n^2}-\ell
=(2-\ell)
+\frac{2\delta_n^2}{\sqrt{1+\delta_n^2}+1}.
$$
Hence
\begin{equation*}
2-\ell
\le t_n
\le (2-\ell)+\delta_n^2.
\end{equation*}
Since $\ell\ge3/2$ and $\delta_n\le\delta_0$, the hypothesis gives
\[
\delta_n^2\le\delta_0^2\le\frac{(2-\ell)^2}{4}\le\frac{2-\ell}{8}.
\]
Thus $2-\ell\le t_n\le\frac98(2-\ell)$. The second equivalence in \eqref{eq:t-asymptotics} follows from \eqref{eq:d-tau-asymptotics2}.
\end{proof}

The following result gives the norm estimate of ${\bf f}_N$.
\begin{proposition}
Under the assumptions of Lemma~\ref{lem:test-final-amplitude}, for every $N\ge2$,
\begin{equation}
\|{\bf f}_N\|_{L^p(W_N)}^p
\lesssim_p
{x_{a,0}}Q^{-1/(p-1)}
+{x_{a,0}}(\frac{r_b}2)^{N-1}.
\label{eq:input-norm-bound}
\end{equation}

\end{proposition}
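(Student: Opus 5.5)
The estimate reduces to a direct computation, because ${\bf f}_N$ is pointwise an eigenvector of $W_N$. For $x\in I_0$, \eqref{eq:explicit-test-function} gives ${\bf f}_N(x)=f_{L(x)}{\bf a}_{L(x)}$. By the spectral form \eqref{eq:truncation-pointwise-spectral-form}, ${\bf a}_{L(x)}$ is the eigenvector of $W_N(x)$ with eigenvalue $\Lambda_a(x)$. Hence
\[
|W_N(x)^{1/p}{\bf f}_N(x)|^p=\Lambda_a(x)\,|f_{L(x)}|^p,
\]
and no rotation leakage appears, unlike in Lemma~\ref{lem:coordinate-leakage}. Integrating over the partition from Proposition~\ref{pro-1}(i), and using the measures from Proposition~\ref{pro-1}(ii) with $n=0$ and $J=I_0$, the total measure of $E_{2n}$ is $2^{-n}$ for $1\le n\le N-1$, and that of $E_{2N}$ is $2^{-(N-1)}$. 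Therefore
\[
\|{\bf f}_N\|_{L^p(W_N)}^p=\sum_{n=1}^{N-1}2^{-n}x_{a,n}^\#\bigl(\ell^{n-1}t_{n-1}\bigr)^p+2^{-(N-1)}x_{a,N-1}\ell^{p(N-1)}.
\]

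The key algebraic identity is $x_{a,m}\ell^{pm}=x_{a,0}r_a^{-(p-1)m}(r_a^{p-1}r_b)^m=x_{a,0}r_b^m$, which follows from Proposition~\ref{W-pro}(i) and \eqref{eq:def-d}. The last term then equals $x_{a,0}r_b^{N-1}2^{-(N-1)}=x_{a,0}(r_b/2)^{N-1}$, which is the second term of \eqref{eq:input-norm-bound}. For the main sum, Proposition~\ref{W-pro}(i) gives $x_{a,n}^\#=(2-r_a^{-(p-1)})x_{a,n-1}\le 2x_{a,n-1}$. Combined with the key identity, this yields
\[
x_{a,n}^\#\ell^{p(n-1)}\le 2x_{a,0}r_b^{n-1}.
\]
By Lemma~\ref{lem:test-final-amplitude}, $t_{n-1}^p\lesssim_p Q^{-p/(p-1)}$ uniformly in $n$. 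The sum is therefore bounded by
\[
\lesssim_p x_{a,0}Q^{-p/(p-1)}\sum_{n\ge1}2^{-n}r_b^{n-1}
=x_{a,0}Q^{-p/(p-1)}\cdot\frac{1}{2-r_b}.
\]

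The only delicate point is this geometric series: its ratio $r_b/2$ tends to $1$. Lemma~\ref{lem:stretch-parameter-asymptotics} gives $(2-r_b)^{-1}\asymp_p Q$, so the sum is $\lesssim_p x_{a,0}Q^{1-p/(p-1)}=x_{a,0}Q^{-1/(p-1)}$. This is exactly the first term of \eqref{eq:input-norm-bound}, and the proof is complete.

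The main obstacle is bookkeeping. One must confirm that the amplitude on $E_{2n}$ is $\ell^{n-1}t_{n-1}$ paired with $x_{a,n}^\#$ (index shift $n-1$ versus $n$), and that on $E_{2N}$ the values are $x_{a,N-1}$ and $\ell^{N-1}$. The loss from $2-r_b\asymp Q^{-1}$ must be compensated precisely by $t^p\asymp Q^{-p/(p-1)}$. No upper bound on $N$ is needed, since every estimate is uniform in $n$.
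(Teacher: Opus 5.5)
Your proposal is correct and follows essentially the same route as the paper. It uses the pointwise eigenvector identity, the measures of the $E_{2n}$ from Proposition~\ref{pro-1}(ii), the relation $x_{a,m}\ell^{pm}=x_{a,0}r_b^m$, and the geometric series with $(1-r_b/2)^{-1}\asymp_p Q$ balanced against $t_j^p\lesssim_p Q^{-p/(p-1)}$. The only cosmetic difference is that the paper first records the exact identity \eqref{eq:input-norm-identity} before estimating, whereas you bound $2-r_a^{-(p-1)}\le 2$ directly.
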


\begin{proof}
By \eqref{eq:truncation-pointwise-spectral-form} and \eqref{eq:explicit-test-function},
\[
W_N(x)^{1/p}{\bf f}_N(x)=\Lambda_a(x)^{1/p}f_{L(x)}{\bf a}_{L(x)}.
\]
Proposition~\ref{pro-1}(ii), applied to $I_0$, gives
\[
\sum_{I\in E_{2n}}|I|=2^{-n}\quad(1\le n<N),\qquad
\sum_{I\in E_{2N}}|I|=2^{1-N}.
\]
Consequently,
\begin{align*}
\|{\bf f}_N\|_{L^p(W_N)}^p
&=\int_{I_0}\Lambda_a(x)|f_{L(x)}|^p\,dx\\
&=\sum_{n=1}^{N-1}2^{-n}x_{a,n}^\#\ell^{p(n-1)}t_{n-1}^p
+2^{1-N}x_{a,N-1}\ell^{p(N-1)}.
\end{align*}
Substituting the formulas from Proposition~\ref{W-pro}(i) and using $\ell^p=r_a^{p-1}r_b$, we obtain the exact identity
\begin{equation}\label{eq:input-norm-identity}
\|{\bf f}_N\|_{L^p(W_N)}^p
=x_{a,0}\Bigg[\frac{2-r_a^{-(p-1)}}2
\sum_{j=0}^{N-2}\left(\frac{r_b}{2}\right)^j t_j^p
+\left(\frac{r_b}{2}\right)^{N-1}\Bigg].
\end{equation}
By Lemma~\ref{lem:test-final-amplitude} and \eqref{eq:d-tau-asymptotics},
\[
t_j^p\lesssim_p Q^{-p/(p-1)},\qquad
\left(1-\frac{r_b}{2}\right)^{-1}\asymp_p Q.
\]
Since $1<2-r_a^{-(p-1)}<2$, the first term in \eqref{eq:input-norm-identity} is bounded by
\[
C_p x_{a,0}Q^{-p/(p-1)}
\sum_{j=0}^{N-2}\left(\frac{r_b}{2}\right)^j
\lesssim_p x_{a,0}Q^{-1/(p-1)}.
\]
The second term is exactly the contribution of $E_{2N}$. Keeping this
term gives \eqref{eq:input-norm-bound}.
\end{proof}

\section{A lower bound for \texorpdfstring{$\mathbb S-\mathbb S^*$}{S minus its adjoint}}\label{sec:haar-lower}

\subsection{Haar shift and Haar coefficients}

Let $\mathcal D_{\rm odd}=\bigcup_{n\ge0}\mathcal D_{2n+1}$.
With $I_+$ denoting the left child, set
\begin{equation}\label{eq:haar-normalization}
h_I=|I|^{-1/2}(\one_{I_+}-\one_{I_-}).
\end{equation}
For vector-valued $f$, write $\langle f,h_I\rangle=\int_I f h_I$.
Following the odd-generation restriction used in
\cite[Section 6]{domelevo-petermichl-treil-volberg-2024}, define
the Haar shift $\mathbb S$ and its unweighted adjoint by
\begin{align*}
\mathbb S f&=\sum_{I\in\mathcal D_{\rm odd}}
\langle f,h_I\rangle(h_{I_+}-h_{I_-}),\\
\mathbb S^* f&=\sum_{I\in\mathcal D_{\rm odd}}
\langle f,h_{I_+}-h_{I_-}\rangle h_I.
\end{align*}
For comparison with the transfer formula, define the operator
\[
\mathbb S_0 f=\sum_{I\in\mathcal D_{\rm odd}}
[\langle f,h_{I_+}\rangle h_{I_-}
-\langle f,h_{I_-}\rangle h_{I_+}].
\]
All these formulas are first defined on finite linear combinations
of Haar functions and constant vectors, and act on each component.
Only finite expansions are needed in the dyadic argument. Orthogonality
of the Haar functions gives, by rearranging finite sums,
\[
(Tf,g)_{L^2(I_0)}=-(f,Tg)_{L^2(I_0)},\qquad
T=\mathbb S-\mathbb S^*.
\]
In particular, the identity used below requires no assertion about
bounded extensions to weighted spaces.

\begin{proposition}[Haar coefficients]\label{prop:test-haar-coefficients}
Each \(J\in\mathcal D\) falls into one of the following three cases.
\begin{enumerate}
\item[{\rm(i)}] If \(J\in\mathcal P_{2n}\) for some {\(0\le n\le N-2\)}, then
\begin{equation}
\langle {\bf f}_N,h_J\rangle
=
|J|^{1/2}\delta_0 {\bf b}_J.
\label{eq:rotation-haar-coefficient}
\end{equation}

\item[{\rm(ii)}]  If \(J\in\mathcal P_{2n+1}\) for some {\(0\le n\le N-2\)}, let
\(K\in\mathcal P_{2n}\) be its unique parent.  Thus \(J=K_+\) or
\(J=K_-\), and
\begin{equation*}
\begin{aligned}
\langle {\bf f}_N,h_{K_+}\rangle
&=
\frac{|K|^{1/2}}{\sqrt2}\ell^n
\frac{\ell\cos\theta_n-1}{\cos\theta_n}\,{\bf a}_{K,+},\\
\langle {\bf f}_N,h_{K_-}\rangle
&=
-\frac{|K|^{1/2}}{\sqrt2}\ell^n
\frac{\ell\cos\theta_n-1}{\cos\theta_n}\,{\bf a}_{K,-}.
\end{aligned}
\end{equation*}

\item[{\rm(iii)}]  If
$
J\notin
\bigcup_{n=0}^{N-2}
\bigl(\mathcal P_{2n}\cup\mathcal P_{2n+1}\bigr),
$
then
\begin{equation}
\langle {\bf f}_N,h_J\rangle=0.
\label{eq:all-other-haar-coefficients-zero}
\end{equation}
{In particular, this case includes $E_{2N}$ and every $J$ contained
in an interval $I\in E$, on which ${\bf f}_N$ is constant.}
\end{enumerate}
\end{proposition}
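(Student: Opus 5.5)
The plan is to reduce every Haar coefficient to a difference of averages on the two children, and then read those averages off the explicit construction of ${\bf f}_N$. By \eqref{eq:haar-normalization}, for every $J\in\mathcal D$,
\[
\langle {\bf f}_N,h_J\rangle=|J|^{-1/2}\Bigl(\tfrac{|J|}{2}\langle{\bf f}_N\rangle_{J_+}-\tfrac{|J|}{2}\langle{\bf f}_N\rangle_{J_-}\Bigr)
=\tfrac{|J|^{1/2}}{2}\bigl(\langle{\bf f}_N\rangle_{J_+}-\langle{\bf f}_N\rangle_{J_-}\bigr).
\]
Section~\ref{sec:test-function} already shows that $\langle{\bf f}_N\rangle_L={\bf f}_L=f_L{\bf a}_L$ for every $L$ in $\mathcal P\cup E$ and every odd-generation interval of the construction. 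Hence each case is a direct computation with the amplitudes $f_L$ and the rotated bases of Subsection~\ref{rotation}.

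For (i), take $J\in\mathcal P_{2n}$. By \eqref{eq:test-generation-rotation-values}, $\langle{\bf f}_N\rangle_{J_\pm}=\frac{\ell^n}{\cos\theta_n}{\bf a}_{J_\pm}$. The rotation formulas give
\[
{\bf a}_{J_+}-{\bf a}_{J_-}=2\sin\theta_n{\bf b}_J,
\]
so the difference of the children's averages is $2\ell^n\tan\theta_n{\bf b}_J=2\ell^n\delta_n{\bf b}_J$. Since $\delta_n=\delta_0\ell^{-n}$ by \eqref{def-delta}, this equals $2\delta_0{\bf b}_J$, and the prefactor $|J|^{1/2}/2$ then yields \eqref{eq:rotation-haar-coefficient}. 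The $\ell^n$ growth of ${\bf f}$ and the $\ell^{-n}$ decay of $\delta_n$ cancel exactly, which is the design of the construction.

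For (ii), take $J=K_+$. Its children $K_{++}$ and $K_{+-}$ carry the vectors $\ell^{n+1}{\bf a}_{K_+}$ and $\ell^n t_n{\bf a}_{K_+}$, because the grandchildren share the parent's basis. This holds by \eqref{eq:test-generation-stretch-values}, and also at $n=N-2$, where the outer grandchild lies in $E_{2N}$ with value $\ell^{N-1}{\bf a}$. The difference of the averages is
\[
\ell^n(\ell-t_n){\bf a}_{K_+}=2\ell^n\bigl(\ell-\tfrac1{\cos\theta_n}\bigr){\bf a}_{K_+}=2\ell^n\tfrac{\ell\cos\theta_n-1}{\cos\theta_n}{\bf a}_{K_+},
\]
using \eqref{eq:test-final-value}. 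Multiplying by $|J|^{1/2}/2=|K|^{1/2}/(2\sqrt2)$ gives the stated formula. For $J=K_-$ the left child is the inner grandchild $K_{-+}$ carrying $\ell^nt_n$, and the right child is $K_{--}$ carrying $\ell^{n+1}$, so the same computation produces the opposite sign.

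For (iii), Proposition~\ref{pro-1}(iii) shows that any $J$ outside $\bigcup_{n\le N-2}(\mathcal P_{2n}\cup\mathcal P_{2n+1})$ is contained in some $L\in E$. By \eqref{eq:explicit-test-function}, ${\bf f}_N$ is constant on $L$, so $h_J$ integrates to zero against it; this covers $E_{2N}$ and all descendants of intervals in $E$. I expect no serious obstacle. The only points requiring care are the left/right and sign bookkeeping in (ii), in particular which grandchild is inner, and the convention that $J_+$ is the left child. One should also note that the averaging identities used above were verified for every odd-generation interval, including the last step $n=N-2$.
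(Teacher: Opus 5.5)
Your proposal is correct and follows essentially the same route as the paper. Both arguments write each coefficient as $\frac{|J|^{1/2}}{2}\bigl(\langle{\bf f}_N\rangle_{J_+}-\langle{\bf f}_N\rangle_{J_-}\bigr)$ and then use ${\bf a}_{J_+}-{\bf a}_{J_-}=2\sin\theta_n{\bf b}_J$ with $\tan\theta_n=\delta_0\ell^{-n}$, the identity $\ell-t_n=2(\ell\cos\theta_n-1)/\cos\theta_n$ together with the inner/outer grandchild ordering for the sign, and the constancy of ${\bf f}_N$ on members of $E$.
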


\begin{proof}
\textit{Case 1. \(J\in\mathcal P_{2n}\), {\(0\le n\le N-2\)}.}
{From the Haar normalization \eqref{eq:haar-normalization} and the averaging properties $\langle{\bf f}_N\rangle_J=f_J{\bf a}_J$ of Section~\ref{sec:test-function} (see \eqref{eq:explicit-test-function} and \eqref{eq:test-generation-rotation-values} and \eqref{eq:test-generation-stretch-values}),}
\[
\begin{aligned}
\langle {\bf f}_N,h_J\rangle
&=
\frac{|J|^{1/2}}2
\left(
\langle {\bf f}_N\rangle_{J_+}
-\langle {\bf f}_N\rangle_{J_-}
\right)\\
&=
\frac{|J|^{1/2}\ell^n}{2\cos\theta_n}
\left({\bf a}_{J,+}-{\bf a}_{J,-}\right)\\
&=
|J|^{1/2}\ell^n
\frac{\sin\theta_n}{\cos\theta_n}\,{\bf b}_J.
\end{aligned}
\]
Using the identity
${\bf a}_{J,+}-{\bf a}_{J,-}=2\sin\theta_n {\bf b}_J$ from the
rotation step in Section~\ref{rotation} and
$\tan\theta_n=\delta_n=\delta_0\ell^{-n}$ {(Proposition~\ref{W-pro}(ii) and \eqref{eq:def-d})}, we get
\eqref{eq:rotation-haar-coefficient}.

\medskip
\noindent
\textit{Case 2. \(J\in\mathcal P_{2n+1}\), {\(0\le n\le N-2\)}.}
Let \(K\in\mathcal P_{2n}\) be the parent of \(J\).  {Since
$(\ell-t_n)/2=(\ell\cos\theta_n-1)/\cos\theta_n$ (from $t_n=2/\cos\theta_n-\ell$, \eqref{eq:test-final-value}), the averaging identities of Section~\ref{sec:test-function} give}
\[
\begin{aligned}
\langle {\bf f}_N,h_{K_+}\rangle
&=
\frac{|K|^{1/2}}{\sqrt2}\ell^n
\frac{\ell\cos\theta_n-1}{\cos\theta_n}\,{\bf a}_{K,+},\\
\langle {\bf f}_N,h_{K_-}\rangle
&=
-\frac{|K|^{1/2}}{\sqrt2}\ell^n
\frac{\ell\cos\theta_n-1}{\cos\theta_n}\,{\bf a}_{K,-}.
\end{aligned}
\]
The sign changes because the child in $\mathcal P$ and the child in
$E$ occur in the opposite order on $K_-$, as shown in
\eqref{eq:test-generation-stretch-values}.

\medskip
\noindent
\textit{Case 3. All remaining \(J\in\mathcal D\).}
{If $J\notin\bigcup_{n=0}^{N-2}
(\mathcal P_{2n}\cup\mathcal P_{2n+1})$, then either $J\in E$ or
$J\subset I$ for some $I\in E$. This includes $E_{2N}$. In both
situations ${\bf f}_N$ is constant on $J$ by
\eqref{eq:explicit-test-function}. Therefore
$\langle {\bf f}_N,h_J\rangle=0$ and
\eqref{eq:all-other-haar-coefficients-zero} holds.}
\end{proof}

\subsection{Lower bound for the Haar shift}
The following theorem is the main result of this subsection, which provides
a lower bound for the difference of Haar shifts.

\begin{theorem}\label{thm:main}
Let \(1<p<\infty\) and $p\neq 2$. There exist constants \(c_p,C_p>0\) such
that for all sufficiently large \(Q\) there is a \(2\times2\) matrix
weight \(W\) on \(I_0\) with \([W]_{A_p}^{\mathrm{dy}}\le C_pQ\) and a function
\(0\ne f\in L^p(W)\) satisfying
\begin{equation*}
\|(\mathbb S-\mathbb S^*) f\|_{L^p(W)}
\ge c_pQ^{1+1/[p(p-1)]}\|f\|_{L^p(W)}.
\end{equation*}
\end{theorem}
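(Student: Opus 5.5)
We first treat \(p>2\) with \(W=W_N\) and \(f={\bf f}_N\), taking \(\delta_0=\tfrac12(2-\ell)\) and \(N\) so large that \((r_b/2)^{N-1}\le Q^{-1/(p-1)}\). Proposition~\ref{lem:dyadic-Ap-upper} gives \([W_N]^{\mathrm{dy}}_{A_p}\lesssim_p Q\). The bound \eqref{eq:input-norm-bound} gives \(\|{\bf f}_N\|_{L^p(W_N)}^p\lesssim_p x_{a,0}Q^{-1/(p-1)}\). It then suffices to prove
\[
\|(\mathbb S-\mathbb S^*){\bf f}_N\|_{L^p(W_N)}^p\gtrsim_p x_{a,0}Q^p ,
\]
because dividing and taking \(p\)-th roots yields the exponent \(1+1/[p(p-1)]\).

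To compute \((\mathbb S-\mathbb S^*){\bf f}_N\), we use Proposition~\ref{prop:test-haar-coefficients}. The only nonzero Haar coefficients are at \(K\in\mathcal P_{2n}\) (in direction \({\bf b}_K\), of size \(|K|^{1/2}\delta_0\)) and at \(K_\pm\in\mathcal P_{2n+1}\) (in directions \({\bf a}_{K,\pm}\), of size \(\asymp |K|^{1/2}\ell^n(\ell-1)\)). The odd-generation intervals of \(\mathcal D_{\rm odd}\) are exactly the \(K_\pm\). Hence \(\mathbb S{\bf f}_N\) is built from the coefficients at \(K_\pm\) spread to generation \(2n+3\). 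The term \(\mathbb S^*{\bf f}_N\) collects the coefficients at generation \(2n+2\), namely at the children of \(K_\pm\). Only \(K_{++},K_{--}\in\mathcal P_{2n+2}\) carry nonzero coefficients there, namely \(\delta_0{\bf b}\)-terms, and these multiply \(h_{K_\pm}\).

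On the inner grandchildren \(K_{+-}\in E_{2n+2}\), \(\mathbb S\) contributes \(\pm|K|^{-1/2}\) times the \(K_+\) coefficient, giving a vector of size \(\asymp\ell^n\) along \({\bf a}_{K_+}\). The term \(\mathbb S^*\) contributes \(\asymp\delta_0\) along \({\bf b}_{K_+}\), up to the scale of \(h_{K_+}\). The \({\bf a}\)-component therefore does not cancel, and the deeper terms vanish on \(K_{+-}\) because \({\bf f}_N\) is constant there. So on \(K_{+-}\) we get
\[
|W_N^{1/p}(\mathbb S-\mathbb S^*){\bf f}_N|^p\gtrsim (x^\#_{a,n+1})\,\ell^{pn}\approx x_{a,0}r_b^n ,
\]
using \(x_{a,n}\ell^{pn}=x_{a,0}r_b^n\). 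A further gain should come from the \({\bf b}\)-direction. The \(\delta_0{\bf b}\)-component is weighted by \(x^\#_{b,n+1}\approx(2-r_b)r_b^n\), which is small, so the \({\bf a}\)-part is the relevant one.

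Summing over \(n\) with measure \(2^{-n}\) gives \(\sum(r_b/2)^n\asymp Q\), that is \(\gtrsim x_{a,0}Q\). This is short of \(Q^p\), so the real gain must come from the \(\mathcal P\)-intervals. There, \(\mathbb S{\bf f}_N\) at deeper levels accumulates: on every point of a \(\mathcal P_{2n+2}\) interval, the values of \((\mathbb S-\mathbb S^*){\bf f}_N\) sum over all ancestors \(m\le n\). The \({\bf b}\)-directed pieces \(\delta_0|K|^{-1/2}\cdot|K|^{1/2}\) are constant-size, but after the rotations the \({\bf a}\)-pieces \(\ell^m{\bf a}_{K_m}\) sum to roughly \(\ell^n\). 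Seen in the fixed frame, the deviation from \({\bf a}_L\) of size \(\ell^m\theta\)-sums yields \({\bf b}\)-components of order \(\sum_m \ell^m\delta_m\approx n\delta_0\). That \({\bf b}\)-part is weighted by \(x_{b}\). Carefully, I would prove a pointwise lower bound on \(E_{2n+2}\) of the form \(|{\bf b}_L\cdot(\mathbb S-\mathbb S^*){\bf f}_N|\gtrsim\delta_0\min(n,Q)\). Integrated against \(x^\#_{b,n+1}\approx Q^{-1}r_b^n\) with measure \(2^{-n}\), this gives \(\delta_0^p\sum_n (r_b/2)^n Q^{-1}\min(n,Q)^p\approx x_{a,0}Q^p\), since \(\sum_n (r_b/2)^n n^p\asymp Q^{p+1}\). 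The main obstacle is this step: showing the accumulated \({\bf b}_L\)-components have a consistent sign, so they add rather than cancel between \(\mathbb S\) and \(\mathbb S^*\) and across levels along the path. I would track, along a path with signs \(\sigma_m\), the exact inner product of each term with \({\bf b}_L\) using \(\Psi_L-\Psi_{K_m}\). I would then check that the \(\mathbb S^*\) term \(\delta_0{\bf b}\) and the \(\mathbb S\) term share the sign dictated by \(\sigma_m\), and aim to prove this on a set of paths of proportional measure.

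For \(1<p<2\), we use duality. Apply the \(p'>2\) construction to get \(W'\) with \([W']_{A_{p'}}\lesssim Q'\), and set \(W=W'^{-1/(p'-1)}\)... (\(W=W'^{1-p}\)). Since \([W]_{A_p}=[W']_{A_{p'}}^{p-1}\) up to constants, take \(Q=Q'^{\,p-1}\). Antisymmetry of \(T=\mathbb S-\mathbb S^*\) gives \(\|T\|_{L^p(W)}=\|T\|_{L^{p'}(W')}\gtrsim Q'^{\,1+1/[p'(p'-1)]}\). Expressed in \(Q\), this exponent is \((1+(p-1)^2/p)/(p-1)=1+1/[p(p-1)]\). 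A concrete extremal \(f\) is then obtained by testing the bilinear form against \({\bf f}_N\) and a weighted version of \(T{\bf f}_N\).
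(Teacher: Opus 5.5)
\textbf{Verdict: gap at the central step.} Your architecture is the paper's:
\begin{itemize}
\item the same $W_N$ and ${\bf f}_N$;
\item the input bound $\|{\bf f}_N\|_{L^p(W_N)}^p\lesssim_p x_{a,0}Q^{-1/(p-1)}$;
\item the target $x_{a,0}Q^p$, obtained from a pointwise bound of size $k\delta_0$ on the ${\bf b}_L$-component on $L\in E_{2k}$, weighted by $x^\#_{b,k}$ because ${\bf b}_L$ is an eigenvector of $W_N$ on $L$;
\item duality for $1<p<2$.
\end{itemize}
But the one step that carries all the content is not proved. You call the sign coherence of the ${\bf b}_L$-components ``the main obstacle'' and only propose to check it, possibly on a proportional set of paths. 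Without it you have nothing beyond the $x_{a,0}Q$ from the ${\bf a}$-direction. As written, this is a plan rather than a proof.

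\textbf{What is missing.} Take the ancestor chain $I_0=J_0\supset J_1\supset\cdots\supset J_{2k-1}\supset L$. Let $\sigma_m=\pm1$ record the rotation signs, and set $\psi_m=\Psi_L-\Psi_{J_{2m}}=\sum_{j=m}^{k-1}\sigma_j\theta_j$. Proposition~\ref{prop:test-haar-coefficients} and the Haar values along the chain give exactly
\[
\mathbb S^*{\bf f}_N(x)=\frac{\delta_0}{\sqrt2}\Bigl(\sum_{n=1}^{k-1}{\bf b}_{J_{2n}}-{\bf b}_L\Bigr),
\qquad
\langle\mathbb S{\bf f}_N(x),{\bf b}_L\rangle=-\sqrt2\sum_{m=1}^{k-1}\ell^{m-1}(\ell-\sec\theta_{m-1})\,\sigma_m\sin\psi_m .
\]

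The ${\bf b}$-pieces of $\mathbb S^*$ are the cleanest source of growth. Each has constant size $\delta_0/\sqrt2$, but all $|\psi_n|\lesssim\delta_0$, so they are nearly parallel to ${\bf b}_L$. Hence $\langle\mathbb S^*{\bf f}_N(x),{\bf b}_L\rangle\ge\frac{\delta_0}{\sqrt2}(k-2)-C\delta_0^3$ on every path.

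For $\mathbb S$ you then need only a one-sided bound showing it cannot cancel this. The key observation is
\[
\sigma_m\psi_m\ge\theta_m-\sum_{j>m}\theta_j\ge-\frac{2-\ell}{\ell-1}\delta_0\ell^{-m}-O(\delta_0^3\ell^{-3m}).
\]
Because $\ell$ is close to $2$, each rotation angle nearly dominates the sum of all later ones. So $\sigma_m\sin\psi_m$ is essentially nonnegative whatever the sign pattern. In particular the relevant sign is not ``dictated by $\sigma_m$'': the product $\sigma_m\sin\psi_m$ removes the path dependence. This gives
\[
\langle\mathbb S{\bf f}_N(x),{\bf b}_L\rangle\le\frac{\sqrt2}{\ell}(k-1)(2-\ell)\delta_0+C\delta_0^3,
\]
a small fraction of $\frac{\delta_0}{\sqrt2}(k-2)$ once $\ell\ge19/10$, $\delta_0\le(2-\ell)/2$ and $k\ge4$. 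Consequently
\[
\langle(\mathbb S-\mathbb S^*){\bf f}_N(x),{\bf b}_L\rangle\le-c\,k\delta_0
\]
for every $L\in E_{2k}$ with $4\le k\le N-2$. No restriction to a subset of paths is needed.

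\textbf{The case $1<p<2$.} Your duality argument matches the paper's exponent computation. The relation $[W]_{A_p}\asymp[W']_{A_{p'}}^{p-1}$ for $2\times2$ matrix weights is true via reducing operators. It should either be cited or, as in the paper, verified directly with the matrices $V_I,V_I^\sharp$.
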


Recall that $L(x)\in E$ denotes the unique
interval in $E$ containing $x$. 
The next pointwise estimate shows that contributions
from successive generations accumulate with a common sign after
the error terms are included. 
\begin{lemma}
\label{lem:pathwise-antisymmetric}
Let $p>2$ and let $Q$ be sufficiently large. Suppose that
$0<\delta_0\le(2-\ell)/2$ and $N\ge6$. For every $4\le k\le N-2$
and $L\in E_{2k}$, we have, for almost every $x\in L$,
\begin{equation}
\bigl\langle(\mathbb S-\mathbb S^*){\bf f}_N(x),{\bf b}_{L(x)}\bigr\rangle
\le -c_p k\delta_0,
\label{eq:pathwise-output2}
\end{equation}
\end{lemma}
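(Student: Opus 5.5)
The plan is to compute $(\mathbb S-\mathbb S^*){\bf f}_N(x)$ exactly along the ancestral chain of $L$, and then project onto ${\bf b}_L$ generation by generation.

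\textbf{Ancestral chain.} Fix $L\in E_{2k}$ and $x\in L$. By Proposition~\ref{pro-1}(iii), the dyadic ancestors of $L$ form a chain
\[
K_0=I_0\in\mathcal P_0,\quad J_0\in\mathcal P_1,\quad K_1\in\mathcal P_2,\quad\dots,\quad K_{k-1}\in\mathcal P_{2k-2},\quad J_{k-1}\in\mathcal P_{2k-1},
\]
where $J_m=(K_m)_{\pm}$ and $K_{m+1}$ is the outer grandchild of $K_m$ lying inside $J_m$. The interval $L$ is the inner child of $J_{k-1}$. The odd-generation intervals containing $x$ that carry nonzero Haar data are exactly the $J_m$, $0\le m\le k-1$, plus intervals inside $L$, where everything vanishes by Proposition~\ref{prop:test-haar-coefficients}(iii). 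So $(\mathbb S-\mathbb S^*){\bf f}_N(x)=\sum_{m=0}^{k-1}(A_m+B_m)$, with
\[
A_m=\langle{\bf f}_N,h_{J_m}\rangle\,(h_{J_{m,+}}-h_{J_{m,-}})(x),
\qquad
B_m=-\langle{\bf f}_N,h_{J_{m,+}}-h_{J_{m,-}}\rangle\,h_{J_m}(x).
\]

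\textbf{The $B_m$ terms.} Exactly one child of $J_m$ lies in $\mathcal P_{2m+2}$; the other lies in $E_{2m+2}$ and contributes zero. By Proposition~\ref{prop:test-haar-coefficients}(i), the surviving coefficient is $\pm|K_{m+1}|^{1/2}\delta_0{\bf b}_{K_{m+1}}$, where $K_{m+1}$ is replaced by the sibling $P$-interval when $m=k-1$. Multiplying by $h_{J_m}(x)=\pm|J_m|^{-1/2}$, I check the signs in the two cases $J_m=K_{m,+}$ and $J_m=K_{m,-}$. In both cases the product is $+$, so
\[
B_m=-\tfrac{1}{\sqrt2}\,\delta_0\,{\bf b}_{K'_{m+1}}.
\]
Since $\langle{\bf b}_{K'},{\bf b}_L\rangle=\cos(\Psi_L-\Psi_{K'})\ge1-O(\delta_m^2)$ by Proposition~\ref{W-pro}(iii), each $B_m$ contributes about $-\delta_0/\sqrt2$ uniformly in $m$. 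The sum over $m$ gives $-c\,k\delta_0$.

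\textbf{The $A_m$ terms.} By Proposition~\ref{prop:test-haar-coefficients}(ii), $\langle{\bf f}_N,h_{J_m}\rangle$ has size $\asymp|K_m|^{1/2}\ell^m$ in direction $\pm{\bf a}_{J_m}$. The factor $(h_{J_{m,+}}-h_{J_{m,-}})(x)$ equals $\pm|J_{m,\pm}|^{-1/2}$. Projecting onto ${\bf b}_L$ introduces
\[
\langle{\bf a}_{J_m},{\bf b}_L\rangle=-\sin(\Psi_L-\Psi_{J_m}),
\qquad
\Psi_L-\Psi_{J_m}=\sum_{j=m+1}^{k-1}\sigma_j\theta_j .
\]
This angle is of size $\lesssim\delta_{m+1}=\delta_0\ell^{-m-1}$. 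Hence $|\langle A_m,{\bf b}_L\rangle|\lesssim\ell^m\cdot\delta_0\ell^{-m-1}$, which is also of order $\delta_0$. These terms therefore cannot be discarded as errors; this is the main obstacle.

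\textbf{Handling the obstacle.} I would first try to compute the sign of $\langle A_m,{\bf b}_L\rangle$ exactly. The coefficient sign, the sign of $h_{J_{m,\pm}}(x)$ and the sign of the leading term $\sigma_{m+1}\theta_{m+1}$ of the angle are all determined by which outer/inner child the path takes. I expect to show, as with $B_m$, that the leading part of $\langle A_m,{\bf b}_L\rangle$ is a fixed negative multiple of $\delta_0$, or at least nonpositive. The tail $\sum_{j\ge m+2}\sigma_j\theta_j$ is a geometric remainder of relative size $\ell^{-1}\le 2/3$. Here $3/2\le\ell<2$, $\ell\cos\theta_m-1\in(\tfrac13,1)$ by Lemma~\ref{lem:test-final-amplitude}, and $\delta_0\le(2-\ell)/2$ makes every $O(\delta^2)$ correction negligible.

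If the sign turns out not to be uniform, the fallback is a crude bound. Bound $|\langle A_m,{\bf b}_L\rangle|$ by an explicit constant $\gamma\delta_0$ with $\gamma<1/\sqrt2$, using $\frac{\ell-1}{\ell}\cdot\frac{\ell}{\ell-1}\cdot\frac1\ell$-type factors evaluated near $\ell=2$. Then each generation nets at most $-(1/\sqrt2-\gamma)\delta_0$.

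\textbf{Boundary generations.} The last generation $m=k-1$ and the first few are treated separately. There the angle sum is empty or short, and the $h$ evaluation differs. These $O(1)$ terms are each $O(\delta_0)$. The hypothesis $k\ge4$ ensures that at least a fixed proportion of generations have the favourable sign, so they are absorbed into $-c_pk\delta_0$. This yields \eqref{eq:pathwise-output2}.
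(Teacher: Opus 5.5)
Your decomposition along the ancestral chain and your identification of the $A_m$ terms as the crux match the paper. However, neither of your two ways of handling those terms works, so there is a genuine gap.

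\textbf{The primary route.} The claim that the tail $\sum_{j\ge m+2}\sigma_j\theta_j$ is a remainder of relative size $\ell^{-1}\le 2/3$ conflates the ratio of consecutive terms with the ratio of the whole tail to the leading term. In fact $\sum_{j\ge m+2}\theta_j$ can be as large as roughly $\theta_{m+1}/(\ell-1)$, and $1/(\ell-1)>1$ because $\ell<2$. Take a path with $\sigma_j=-\sigma_{m+1}$ for all later $j$ and $k-m$ large. Then $\sigma_{m+1}\psi_{m+1}<0$ and $\langle A_m,{\bf b}_L\rangle>0$, so no uniform nonpositivity can be proved.

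\textbf{The fallback.} This fails quantitatively. We have $|\langle A_m,{\bf b}_L\rangle|=\sqrt2\,\ell^m(\ell-\sec\theta_m)|\sin\psi_{m+1}|$, with $|\psi_{m+1}|$ up to about $\delta_0\ell^{-m}/(\ell-1)$. So the best absolute bound is about $\sqrt2\,\delta_0$, and it is essentially attained when all $\sigma_j$ agree. In absolute value the $\mathbb S$-terms can therefore be twice the $\mathbb S^*$-terms, so no $\gamma<1/\sqrt2$ exists. The ``fixed proportion of favourable generations'' step has nothing to rest on either.

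\textbf{The missing idea.} What you need is a one-sided bound that exploits $\ell\to 2$. Since $\sigma_{m+1}\psi_{m+1}\ge\theta_{m+1}-\sum_{j\ge m+2}\theta_j$, $\arctan t\ge t-t^3/3$, and $\theta_j\le\delta_0\ell^{-j}$, one gets
\[
\sigma_{m+1}\psi_{m+1}\ge-\frac{2-\ell}{\ell-1}\,\delta_0\ell^{-m-1}-\frac{\delta_0^3}{3}\,\ell^{-3(m+1)}.
\]
Because $|\psi_{m+1}|<\pi/2$, this gives $\sigma_{m+1}\sin\psi_{m+1}\ge\min\{0,\sigma_{m+1}\psi_{m+1}\}$. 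Favourable contributions can then simply be dropped. The unfavourable part of each $A_m$ is at most about $\frac{\sqrt2}{\ell}(2-\ell)\delta_0$, so
\[
\langle\mathbb S{\bf f}_N(x),{\bf b}_L\rangle\le\frac{\sqrt2}{\ell}(k-1)(2-\ell)\delta_0+C\delta_0^3.
\]
This is absorbed by the $-\mathbb S^*$ contribution $-\frac{\delta_0}{\sqrt2}(k-2)+C\delta_0^3$ only because $2-\ell\asymp Q^{-1/(p-1)}$ is small. So you must take $Q$ large enough that $\ell$ is close to $2$; the paper uses $\ell\ge 19/10$.

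\textbf{The last generation.} At $m=k-1$ the point $x$ lies in the $E$-child of $J_{k-1}$, so $h_{J_{k-1}}(x)$ changes sign and $B_{k-1}=+\frac{\delta_0}{\sqrt2}{\bf b}_L$. This is why the $\mathbb S^*$ part yields $k-2$ rather than $k$, and it is where $k\ge4$ enters.
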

\begin{proof}
Fix $L\in E_{2k}$ and $x\in L$. For $0\le n\le k-1$, let
$J_{2n}\in\mathcal P_{2n}$ and $J_{2n+1}\in\mathcal P_{2n+1}$
be the ancestors containing $L$, and set $J_{2k}=L$. Thus $J_{2k-1}$
is the parent of $L$, and
\[
x\in J_{2k}\subset J_{2k-1}\subset\cdots\subset J_0=I_0.
\]
Write $\sigma_n=1$ when $J_{2n+1}=(J_{2n})_+$ and
$\sigma_n=-1$ otherwise. The stretch steps preserve the ordered
orthonormal basis, so
\[
\psi_n=\Psi_L-\Psi_{J_{2n}}
=\sum_{m=n}^{k-1}\sigma_m\theta_m
\quad(0\le n\le k-1),\qquad \psi_k=0.
\]
Since $\theta_m\le\delta_0\ell^{-m}$ and $\ell\ge3/2$,
\[
|\psi_n|\le\frac{\ell}{\ell-1}\delta_0\ell^{-n},
\qquad \sum_{n=1}^{k-1}|\psi_n|^2\le C\delta_0^2.
\]

By Proposition~\ref{prop:test-haar-coefficients}, only the odd
ancestors $J_{2n+1}$, $0\le n\le k-1$, can contribute at $x$.
Hence
\begin{align*}
\mathbb S{\bf f}_N(x)
&=\sum_{n=0}^{k-1}\langle{\bf f}_N,h_{J_{2n+1}}\rangle
(h_{(J_{2n+1})_+}-h_{(J_{2n+1})_-})(x),\\
\mathbb S^*{\bf f}_N(x)
&=\sum_{n=0}^{k-1}
\langle{\bf f}_N,h_{(J_{2n+1})_+}-h_{(J_{2n+1})_-}\rangle
h_{J_{2n+1}}(x).
\end{align*}
For $0\le n\le k-2$, the interval $J_{2n+2}$ is an outer
grandchild of $J_{2n}$. The last interval $L$ is an inner grandchild
of $J_{2k-2}$. Therefore
\[
h_{J_{2n+1}}(x)=
\begin{cases}
\sigma_n|J_{2n+1}|^{-1/2},&0\le n\le k-2,\\
-\sigma_{k-1}|J_{2k-1}|^{-1/2},&n=k-1,
\end{cases}
\]
and
\[
(h_{(J_{2n+1})_+}-h_{(J_{2n+1})_-})(x)=
\begin{cases}
\sqrt2\sigma_n\sigma_{n+1}|J_{2n+1}|^{-1/2},&0\le n\le k-2,\\
-\sigma_{k-1}h_L(x),&n=k-1.
\end{cases}
\]
Substituting the Haar coefficients gives
\begin{align}
\mathbb S{\bf f}_N(x)
&=\sqrt2\sum_{n=0}^{k-2}\ell^n(\ell-\sec\theta_n)
\sigma_{n+1}{\bf a}_{J_{2n+1}}-{\bf r}(x),
\label{eq:pathwise-B}\\
\mathbb S^*{\bf f}_N(x)
&=\frac{\delta_0}{\sqrt2}
\left(\sum_{n=1}^{k-1}{\bf b}_{J_{2n}}-{\bf b}_L\right),
\label{eq:pathwise-D}
\end{align}
where
\[
{\bf r}(x)=|J_{2k-1}|^{1/2}\ell^{k-1}
(\ell-\sec\theta_{k-1})h_L(x){\bf a}_L.
\]
In \eqref{eq:pathwise-D}, each odd ancestor has one child in
$\mathcal P$ and one child in $E$. At the last step, the child in
$\mathcal P$ is the sibling of $L$ and uses the same ordered
orthonormal basis as $L$. It belongs to $\mathcal P$ because
$k\le N-2$. The last Haar value has the opposite sign,
which gives the term $-{\bf b}_L$.

The definitions of the rotated bases give
\[
\langle{\bf b}_{J_{2n}},{\bf b}_L\rangle=\cos\psi_n,
\qquad
\langle{\bf a}_{J_{2n+1}},{\bf b}_L\rangle=-\sin\psi_{n+1}.
\]
Using $\cos t\ge1-t^2/2$, we obtain
\begin{align*}
\langle\mathbb S^*{\bf f}_N(x),{\bf b}_L\rangle
&=\frac{\delta_0}{\sqrt2}
\left(\sum_{n=1}^{k-1}\cos\psi_n-1\right)\\
&\ge\frac{\delta_0}{\sqrt2}(k-2)
-\frac{\delta_0}{2\sqrt2}\sum_{n=1}^{k-1}|\psi_n|^2\\
&\ge\frac{\delta_0}{\sqrt2}(k-2)-C\delta_0^3.
\end{align*}

For the other term, note that
$0<\ell-\sec\theta_n\le\ell-1$ for the chosen parameters.
Since $\arctan t\ge t-t^3/3$ for $t\ge0$,
\begin{align*}
\sigma_m\psi_m
&\ge\theta_m-\sum_{j=m+1}^{k-1}\theta_j\\
&\ge-\frac{2-\ell}{\ell-1}\delta_0\ell^{-m}
-\frac{\delta_0^3}{3}\ell^{-3m}.
\end{align*}
Also $|\psi_m|\le3\delta_0<\pi/2$. If $\sigma_m\psi_m\le0$,
then $\sin(\sigma_m\psi_m)\ge\sigma_m\psi_m$. Otherwise its sine
is nonnegative. Thus
\[
\sigma_m\sin\psi_m
\ge-\frac{2-\ell}{\ell-1}\delta_0\ell^{-m}
-\frac{\delta_0^3}{3}\ell^{-3m}.
\]
Since ${\bf r}(x)\perp{\bf b}_L$, \eqref{eq:pathwise-B} yields
\begin{align*}
\langle\mathbb S{\bf f}_N(x),{\bf b}_L\rangle
&=-\sqrt2\sum_{m=1}^{k-1}\ell^{m-1}
(\ell-\sec\theta_{m-1})\sigma_m\sin\psi_m\\
&\le\frac{\sqrt2}{\ell}(k-1)(2-\ell)\delta_0
+\frac{\sqrt2(\ell-1)}{3}\delta_0^3
\sum_{m=1}^{k-1}\ell^{-2m-1}\\
&\le\frac{\sqrt2}{\ell}(k-1)(2-\ell)\delta_0+C\delta_0^3.
\end{align*}
Combining the two estimates gives
\begin{equation}
\begin{aligned}
\bigl\langle(\mathbb S-\mathbb S^*){\bf f}_N(x),{\bf b}_L\bigr\rangle
\le{}&-\frac{\delta_0}{\sqrt2}(k-2)\\
&+\frac{\sqrt2}{\ell}(k-1)(2-\ell)\delta_0+C\delta_0^3.
\end{aligned}
\label{eq:pathwise-total}
\end{equation}
Here the combined cubic error can be bounded by $3\delta_0^3$.
Indeed, for $\ell\ge3/2$,
\[
\sum_{n=1}^{k-1}|\psi_n|^2\le\frac{36}{5}\delta_0^2,
\qquad
\frac{\sqrt2(\ell-1)}3\sum_{m=1}^{\infty}\ell^{-2m-1}
=\frac{\sqrt2}{3\ell(\ell+1)}\le\frac{4\sqrt2}{45},
\]
and $18/(5\sqrt2)+4\sqrt2/45<3$.
Take $Q$ large enough that $\ell\ge19/10$, so $\delta_0\le1/20$.
For $k\ge4$, divide the two positive errors by
$A=(k-2)\delta_0/\sqrt2$. Their ratios are at most
\[
\frac{3(2-\ell)}{\ell}\le\frac3{19}<\frac14,
\qquad
\frac{3\sqrt2\delta_0^2}{k-2}\le\frac{3\sqrt2}{800}<\frac14.
\]
Thus the left-hand side of \eqref{eq:pathwise-total} is at most
$-A/2\le-k\delta_0/(4\sqrt2)$, uniformly in $k$ and $N$.
This proves \eqref{eq:pathwise-output2}.
\end{proof}

\begin{lemma}[Choice of the truncation depth]\label{choice}
Fix $p>2$, let $Q$ be sufficiently large, and let $r_b$ be as in
\eqref{eq:rb}. Put $\rho=r_b/2$ and $\lambda=-\log\rho$.
Every integer
\begin{equation}\label{eq:choice-depth}
N\ge\max\left\{6,
\left\lceil\frac{20p\log Q}{\lambda}\right\rceil\right\}
\end{equation}
satisfies
\[
\rho^{N-1}\le Q^{-10p},\qquad
\sum_{k=4}^{N-2}k^p\rho^k
\asymp_p(1-\rho)^{-p-1}\asymp_p Q^{p+1},
\]
with constants independent of $N$ and $Q$.
\end{lemma}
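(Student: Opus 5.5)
The plan has two parts: convert the depth condition into the tail bound $\rho^{N-1}\le Q^{-10p}$, and compare the finite weighted sum with the full series $\sum_{k\ge0}k^p\rho^k$. The first input is Lemma~\ref{lem:stretch-parameter-asymptotics}: $2-r_b\asymp_p Q^{-1}$, so $1-\rho=(2-r_b)/2\asymp_p Q^{-1}$. Because $1-\rho\le 1/2$ for $Q$ large, the elementary inequalities $t\le -\log(1-t)\le 2t$ on $[0,1/2]$ give $\lambda=-\log\rho\asymp 1-\rho\asymp_p Q^{-1}$.

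For the tail bound, the hypothesis \eqref{eq:choice-depth} gives $N-1\ge 20p\log Q/\lambda-1$. Hence
\[
\rho^{N-1}=e^{-\lambda(N-1)}\le e^{\lambda}Q^{-20p}\le Q^{-10p},
\]
using $\lambda\le 1$ and $Q^{10p}\ge e$ for $Q$ large.

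For the sum, I compare with the full series. The standard estimate is $\sum_{k\ge0}k^p\rho^k\asymp_p(1-\rho)^{-p-1}$. To see it, compare with $\int_0^\infty t^pe^{-\lambda t}\,dt=\Gamma(p+1)\lambda^{-p-1}$. The function $t\mapsto t^pe^{-\lambda t}$ is unimodal with maximum value $(p/(e\lambda))^p\lesssim_p\lambda^{-p}$, which is negligible compared with $\lambda^{-p-1}$. So the sum and the integral differ by $O_p(\lambda^{-p})$, and with $\lambda\asymp 1-\rho$ this gives two-sided bounds of order $(1-\rho)^{-p-1}\asymp_p Q^{p+1}$. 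That settles the upper bound for $\sum_{k=4}^{N-2}$.

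For the lower bound I must show that truncating to $4\le k\le N-2$ keeps a fixed fraction of the series. The terms with $k<4$ total at most $3\cdot 4^p$, which is $O_p(1)$ and therefore negligible. The main point, where care is needed, is the tail $k\ge N-1$. By Cauchy--Schwarz,
\[
\sum_{k\ge N-1}k^p\rho^k\le \rho^{(N-1)/2}\sum_{k}k^p\rho^{k/2}.
\]
Here $1-\rho^{1/2}\asymp 1-\rho$, so the second factor is again $\lesssim_p Q^{p+1}$. The first factor is $\le Q^{-5p}$ by the tail bound already proved. The tail is therefore $\lesssim_p Q^{p+1-5p}$, which is $o(Q^{p+1})$.

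Alternatively, the lower bound follows directly: keep only the range $k\in[\lambda^{-1},2\lambda^{-1}]$. This range lies inside $[4,N-2]$ because $N\gtrsim \log Q/\lambda\gg 2/\lambda$. On it, $\rho^k\ge e^{-2}$ and $k^p\gtrsim\lambda^{-p}$, and there are about $\lambda^{-1}$ terms, giving a sum $\gtrsim_p\lambda^{-p-1}$. I would use this cleaner route. All constants depend only on $p$, since they enter only through the equivalence in Lemma~\ref{lem:stretch-parameter-asymptotics}, the Gamma-function comparison, and the window argument.
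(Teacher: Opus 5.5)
Your proposal is correct and follows the paper's proof. The tail bound is the same computation $\rho^{N-1}=e^{\lambda}e^{-\lambda N}\le e^{\lambda}Q^{-20p}$, the upper bound is the same comparison of $\sum k^pe^{-\lambda k}$ with $\Gamma(p+1)\lambda^{-p-1}$, and the lower-bound route you settle on is exactly the paper's window $\lceil\lambda^{-1}\rceil\le k\le\lfloor 2\lambda^{-1}\rfloor$. Your alternative tail estimate is also valid, but it is just the termwise bound $\rho^k\le\rho^{(N-1)/2}\rho^{k/2}$ for $k\ge N-1$, not Cauchy--Schwarz.
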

\begin{proof}
By \eqref{eq:d-tau-asymptotics},
$\lambda\asymp1-\rho\asymp_p Q^{-1}$. From
\eqref{eq:choice-depth} and $\rho>1/2$,
\[
\rho^{N-1}=\rho^{-1}e^{-\lambda N}
\le2Q^{-20p}\le Q^{-10p}
\]
for $Q$ sufficiently large.
For the upper bound on the sum, $0<\lambda<1$ gives
\[
\sum_{k=4}^{N-2}k^pe^{-\lambda k}
\le\sum_{k=1}^{\infty}k^pe^{-\lambda k}
\le e^\lambda\int_0^\infty x^pe^{-\lambda x}\,dx
\lesssim_p\lambda^{-p-1}.
\]
For the lower bound, take the integers
$\lceil\lambda^{-1}\rceil\le k\le\lfloor2\lambda^{-1}\rfloor$.
When $Q$ is sufficiently large, these lie between $4$ and $N-2$.
There are at least $c\lambda^{-1}$ such integers, and each satisfies
$k^pe^{-\lambda k}\ge e^{-2}\lambda^{-p}$.
Their sum is therefore at least $c_p\lambda^{-p-1}$.
\end{proof}

To turn the pointwise estimate into a norm lower bound, we use
the duality of the weighted spaces. Put $q=p/(p-1)$.
Under the unweighted integral pairing, the dual of $L^p(W)$ is
$L^q(W^{-q/p})$. Indeed, the maps $f\mapsto W^{1/p}f$ and
$g\mapsto W^{-1/p}g$ reduce this assertion to ordinary $L^p$ duality.
In particular,
\begin{equation*}
\Big|\int\langle f,g\rangle\Big|
\le\|f\|_{L^p(W)}\|g\|_{L^q(W^{-q/p})}.
\end{equation*}
We apply this inequality to an explicit function in the dual space.

\begin{proof}[Proof of Theorem \ref{thm:main} for $p>2$]
Fix $p>2$. Choose $Q$ above all the thresholds in the preceding
lemmas, including the threshold ensuring $\ell\ge19/10$. These
thresholds depend only on $p$. Set $\delta_0=(2-\ell)/4$, and then
choose a finite integer $N$ satisfying Lemma~\ref{choice}.
All preceding implicit constants are uniform in $N$, $Q$, and the
permitted range of $\delta_0$. Put $q=p/(p-1)$.
Define the dual test function
\begin{equation*}
{\bf g}_N(x)=\sum_{k=4}^{N-2}\sum_{L\in E_{2k}}
(k\delta_0)^{p-1}x_{b,k}^{\#}{\bf b}_L\chi_L(x),
\end{equation*}
and set
\[
A_N=\sum_{k=4}^{N-2}2^{-k}(k\delta_0)^p x_{b,k}^{\#}>0.
\]
On $L\in E_{2k}$, the spectral formula
\eqref{eq:truncation-pointwise-spectral-form} gives
$W_N^{-1/p}{\bf b}_L=(x_{b,k}^{\#})^{-1/p}{\bf b}_L$.
Since $(p-1)q=p$ and $|\bigcup_{L\in E_{2k}}L|=2^{-k}$,
\begin{align*}
\|{\bf g}_N\|_{L^q(W_N^{-q/p})}^q
&=\sum_{k=4}^{N-2}\sum_{L\in E_{2k}}|L|
\left[(k\delta_0)^{p-1}(x_{b,k}^{\#})^{1-1/p}\right]^q=A_N.
\end{align*}
In particular, ${\bf g}_N$ belongs to the dual space.
The signed bound in Lemma~\ref{lem:pathwise-antisymmetric} gives
\[
\int_{I_0}\langle-(\mathbb S-\mathbb S^*){\bf f}_N(x),{\bf g}_N(x)\rangle\,dx
\ge c_p A_N.
\]
By the duality between $L^p(W_N)$ and $L^q(W_N^{-q/p})$,
\begin{align*}
\|(\mathbb S-\mathbb S^*){\bf f}_N\|_{L^p(W_N)}
&\ge
\frac{\left|\int_{I_0}
\langle-(\mathbb S-\mathbb S^*){\bf f}_N(x),{\bf g}_N(x)\rangle\,dx\right|}
{\|{\bf g}_N\|_{L^q(W_N^{-q/p})}}\\
&\ge c_p A_N^{1/p}.
\end{align*}
Using $x_{b,k}^{\#}=(2-r_b)x_{b,0}r_b^{k-1}$, we compute
\begin{align*}
A_N
&=x_{b,0}\delta_0^p\frac{2-r_b}{r_b}
\sum_{k=4}^{N-2}k^p(r_b/2)^k
\asymp_p x_{b,0}\delta_0^p Q^p=x_{a,0}Q^p,
\end{align*}
where we used Lemma~\ref{choice}, $2-r_b\asymp_p Q^{-1}$,
and $x_{b,0}\delta_0^p=x_{a,0}$. Hence
\begin{equation}
\|(\mathbb S-\mathbb S^*)f_N\|_{L^p(W_N)}^p
\gtrsim_p x_{a,0}Q^p.
\label{eq:output-pth-power}
\end{equation}

By \eqref{eq:input-norm-bound} and Lemma~\ref{choice},
\begin{equation*}
\|f_N\|_{L^p(W_N)}^p
\lesssim_p x_{a,0}Q^{-1/(p-1)}.
\end{equation*}
Dividing \eqref{eq:output-pth-power} by this inequality and taking
powers with exponent $1/p$ gives
\begin{equation*}
\frac{\|(\mathbb S-\mathbb S^*)f_N\|_{L^p(W_N)}}
{\|f_N\|_{L^p(W_N)}}
\gtrsim_p
Q^{(p+1/(p-1))/p}
=
Q^{\,1+1/[p(p-1)]}.
\end{equation*}
We also retain the explicit normalized pair
\[
\varphi=\frac{{\bf f}_N}{\|{\bf f}_N\|_{L^p(W_N)}},\qquad
\psi=\frac{{\bf g}_N}{\|{\bf g}_N\|_{L^q(W_N^{-q/p})}}.
\]
Both functions are finite dyadic step functions. The signed pairing
above, together with $A_N\asymp_p x_{a,0}Q^p$, proves
\begin{equation}\label{eq:finite-pair-high}
\|\varphi\|_{L^p(W_N)}=\|\psi\|_{L^q(W_N^{-q/p})}=1,
\qquad |(T\varphi,\psi)_{L^2(I_0)}|\gtrsim_p Q^{\alpha(p)}.
\end{equation}
Together with Proposition~\ref{lem:dyadic-Ap-upper}, this proves
the theorem for $p>2$, with $W=W_N$ and $f={\bf f}_N$.
\end{proof}

\begin{proof}[Proof of Theorem \ref{thm:main} for $1<p<2$]
Fix the target exponent $p\in(1,2)$ and target parameter $Q$.
Set $q=p/(p-1)>2$. Apply the already proved
case at exponent $q$, with construction parameter $R$, to obtain
a finite step weight $U$ such that
\[
[U]_{A_q}^{\mathrm{dy}}\le C_pQ^{1/(p-1)},
\qquad |((\mathbb S-\mathbb S^*)\varphi,\psi)_{L^2(I_0)}|\ge c_pQ^{\frac{\alpha(q)}{p-1}},
\]
where $\alpha(s)=1+1/[s(s-1)]$, and the finite pair in
\eqref{eq:finite-pair-high} satisfies
$\|\varphi\|_{L^q(U)}=\|\psi\|_{L^p(U^{-p/q})}=1$. Define
\[
W=U^{-1/(q-1)}=U^{-(p-1)}.
\]
Thus $W^{1/p}=U^{-1/q}$ and $W^{-1/(p-1)}=U$.

We verify the characteristic bound directly, rather than assume an
exact identity of the scalar form for matrix characteristics. For an
interval $I\in\mathcal P_{2n}\cup\mathcal P_{2n+1}$ use the matrices
$V_I,V_I^\sharp$ from
\eqref{eq:section-three-reducing-matrices} for $U$, with exponent $q$
and parameter $R$. The estimates already proved give
\[
\avg_I\|U^{1/q}V_I^{-1}\|_{\op}^{q}\lesssim_p1,
\qquad
\avg_I\|U^{-1/q}(V_I^\sharp)^{-1}\|_{\op}^{p}\lesssim_p1.
\]
The second follows by taking the adjoint of the dual factor in
Corollary~\ref{cor:dyadic-reducing-estimates}. Factor
\begin{align*}
W(x)^{1/p}W(y)^{-1/p}
={}&[U(x)^{-1/q}(V_I^\sharp)^{-1}]
(V_I^\sharp V_I)[V_I^{-1}U(y)^{1/q}].
\end{align*}
Separate the $x$ and $y$ factors in the defining nested integral.
The preceding bounds and adjoint invariance of the operator norm yield
\[
\Phi_{p,I}(W)\lesssim_p\|V_I^\sharp V_I\|_{\op}^{p}
=Q.
\]
On intervals contained in a member of $E$, the weight $W$ is constant, so
$\Phi_{p,I}(W)=1$. This proves $[W]_{A_p}^{\mathrm{dy}}\lesssim_p Q$.

Since $U^{-p/q}=W$, the function $\psi$ has unit norm in $L^p(W)$.
The finite-sum adjoint identity and weighted H\"older inequality give
\[
\|(\mathbb S-\mathbb S^*)\psi\|_{L^p(W)}
\ge |((\mathbb S-\mathbb S^*)\psi,\varphi)_{L^2(I_0)}|
=|((\mathbb S-\mathbb S^*)\varphi,\psi)_{L^2(I_0)}|
\ge c_pQ^{\frac{\alpha(q)}{p-1}}.
\]
Thus exchanging the two finite test functions also supplies a normalized
bilinear pair at exponent $p$.
Here the last power is precisely the desired one because
\[
\frac{\alpha(q)}{p-1}
=\frac1{p-1}+\frac{p-1}{p}
=1+\frac1{p(p-1)}=\alpha(p).
\]
Consequently the explicit choice $f=\psi$ proves the required
lower bound $c_pQ^{\alpha(p)}$.
This completes the low-exponent case.
\end{proof}

\section{From Haar shifts to the Hilbert transform}\label{sec:hilbert-lower}

We transfer the dyadic lower bound of Theorem~\ref{thm:main} to the
Hilbert transform by the periodization and remodeling construction of
\cite[Sections 7 to 9]{domelevo-petermichl-treil-volberg-2024}.
For earlier developments of remodeling, see Kakaroumpas and Treil
\cite{kakaroumpas-treil-2021} and the historical discussion there.

%The full $A_p$ estimate is proved below by tracking the distributionsof the weight. This replaces the argument with matrix averages usedthere for $p=2$.

% A \emph{finite dyadic step function} is a function constant on some\(\mathcal D_m\). Its Haar expansion includes the constant term. Functions on\(I_0\) are extended by zero when paired on \(\R\). Weights areinitially defined only on \(I_0\); only the remodeled weight will beextended periodically.

\subsection{Periodization and quasi-periodization}

 For
\(n\ge1\), let \(\operatorname{ch}^n(I)\) denote the dyadic
descendants of \(I\) of order \(n\). For intervals \(I,J\subset \mathbb R\), let
$
\psi_{I,J}:J\to I
$
be the unique orientation-preserving affine bijection.

Given a function \(f\) on \(I\) and \(N\ge1\), define its
periodization \(\mathcal P_I^N f\) on \(I\) by
\begin{equation}
	\mathcal P_I^N f=f\circ\psi_I^N,
	\qquad
	(\mathcal P_I^N f)\big|_J
	=f\circ\psi_{I,J},
	\quad J\in\operatorname{ch}^N(I).
	\label{eq:periodization}
\end{equation}
Thus \(\mathcal P_I^N f\) consists of \(2^N\) shrunken copies of
\(f|_I\).

For \(\vec N=(N_1,N_2,\ldots)\), with \(N_k\ge2\), the iterated
periodization \(\mathcal P^{\vec N}\) is defined as follows.

\begin{enumerate}[\rm (i)]
	\item The starting intervals of order \(1\) are
$
	\operatorname{Start}_1=\{I_{0,-},I_{0,+}\},
$
	and \(f_0:=f\).
	
	\item Knowing \(f_{k-1}\), replace \(f_{k-1}|_I\) by
	\(\mathcal P_I^{N_k}(f_{k-1}|_I)\) on every starting interval
	\(I\in\operatorname{Start}_k\), and denote the resulting function
	by \(f_k\). The intervals in
	\(\operatorname{ch}^{N_k}(I)\) are the stopping intervals of
	order \(k\).
	
	\item The starting intervals of order \(k+1\) are the four
	grandchildren of the stopping intervals of order \(k\). Return to
	step (ii).
\end{enumerate}
The process stabilizes for finite dyadic step functions.

Following \cite[\S7.1.1]{domelevo-petermichl-treil-volberg-2024},
assign source intervals recursively. Initially,
\[
F(I_{0,\pm})=I_{0,\pm}.
\]
If \(I\in\operatorname{Start}_k\) has source interval
\(F(I)\in \mathcal D_{2k-1}\), give every stopping interval
\(R\in\operatorname{ch}^{N_k}(I)\) the same source interval:
\[
F(R)=F(I).
\]
In the quasi-periodization defined below, only the regular stopping
intervals are further subdivided. For such an interval \(R\), list
\(\operatorname{ch}^2(R)=\{R_1,\ldots,R_4\}\) and
\(\operatorname{ch}^2(F(I))=\{J_1,\ldots,J_4\}\) from left to right,
and set \(F(R_\ell)=J_\ell\). For exceptional stopping intervals,
\(F\) is not defined on their descendants. These assignments give
\begin{equation}
	f_{k-1}|_I=f\circ\psi_{F(I),I},
	\qquad
	f_k|_R=f\circ\psi_{F(R),R}.
	\label{eq:F-association}
\end{equation}
The map \(F\) depends only on \(\vec N\), not on \(f\). It need
not be injective.

The iterated quasi-periodization \(\mathcal Q\mathcal P^{\vec N}\) is defined
as in \cite[\S7.2.1]{domelevo-petermichl-treil-volberg-2024}. The
stopping intervals touching the boundary of a starting interval
\(I\) are called exceptional; the remaining ones are regular. We
write
\[
\begin{aligned}
	E(I)&=\{\text{exceptional stopping subintervals of }I\},\\
	R(I)&=\{\text{regular stopping subintervals of }I\}.
\end{aligned}
\]
On exceptional stopping intervals one puts the average of the current
function over \(I\); on regular stopping intervals one keeps the
periodization and continues on their four grandchildren. The map
\(F\) is defined on every starting/stopping interval on which the
construction is performed. At this stage it is not defined on
descendants of exceptional stopping intervals; remodeling will
extend it to the intervals created there. We keep the same symbol \(F\), as in
\cite[\S7.2.1]{domelevo-petermichl-treil-volberg-2024}.

\begin{lemma}\label{lem:periodization}
	Let \(f,g\), and the positive definite matrix weight \(W\), be
	finite dyadic step functions on \(I_0\). Then ordinary simultaneous
	periodization preserves weighted norms:
	\begin{equation}
		\begin{aligned}
			\|\mathcal P^{\vec N}f\|_{L^p(\mathcal P^{\vec N}W)}
			&=\|f\|_{L^p(W)},\\
			\|\mathcal P^{\vec N}g\|_{L^q((\mathcal P^{\vec N}W)^{-q/p})}
			&=\|g\|_{L^q(W^{-q/p})}.
		\end{aligned}
		\label{eq:simultaneous-periodization}
	\end{equation}
	Choose \(r\) so that \(f,g,W\) are constant on \(D_{2r+1}\), and let
	\(E_{\vec N}\) be the union of the exceptional stopping intervals in
	the first \(r\) orders of quasi-periodization. Then
	\begin{equation}
		|E_{\vec N}|\le\sum_{k=1}^r2^{1-N_k},
		\qquad
		\mathcal QP^{\vec N}f=\mathcal P^{\vec N}f
		\quad\text{on }I_0\setminus E_{\vec N},
		\label{eq:exceptional-set}
	\end{equation}
	and the same equality holds for \(g\) and \(W\).
\end{lemma}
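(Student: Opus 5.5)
The two parts of Lemma~\ref{lem:periodization} are proved separately: the norm identities \eqref{eq:simultaneous-periodization} by a measure-preserving change of variables, and the exceptional-set estimate \eqref{eq:exceptional-set} by counting the exceptional stopping intervals at each order.

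\emph{Step 1: one periodization preserves norms.} Take a single periodization on a starting interval $I$. On each $J\in\operatorname{ch}^N(I)$ both $\mathcal P_I^N f$ and $\mathcal P_I^N W$ are obtained by composing with the same affine map $\psi_{I,J}$. Hence the integrand $|(\mathcal P_I^NW)^{1/p}\mathcal P_I^Nf|^p$ equals $|W^{1/p}f|^p\circ\psi_{I,J}$ on $J$. Matrix powers commute with composition, since $(W\circ\psi)^{1/p}=W^{1/p}\circ\psi$. Changing variables on $J$ gives $\int_J|W^{1/p}f|^p\circ\psi_{I,J}=2^{-N}\int_I|W^{1/p}f|^p$. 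Summing over the $2^N$ intervals $J$ returns $\int_I|W^{1/p}f|^p$, and the part of $I_0$ outside the starting intervals is untouched. The same computation with $W^{-q/p}$ and exponent $q$ handles $g$.

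\emph{Step 2: iterate.} Iterating Step 1 over the orders $k=1,2,\dots$ gives \eqref{eq:simultaneous-periodization}, because at each order $f_{k-1},g_{k-1},W_{k-1}$ are replaced simultaneously on disjoint starting intervals. For finite dyadic step functions the process stabilizes after finitely many orders, so the identity passes to $\mathcal P^{\vec N}$.

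\emph{Step 3: measure of the exceptional set.} In a starting interval $I$ of order $k$, exactly two of the $2^{N_k}$ stopping intervals touch $\partial I$. These two have total measure $2^{1-N_k}|I|$. The starting intervals of order $k$ are disjoint subsets of $I_0$, so the exceptional set created at order $k$ has measure at most $2^{1-N_k}$. Summing over $k\le r$ gives the bound in \eqref{eq:exceptional-set}.

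\emph{Step 4: agreement off $E_{\vec N}$.} Argue by induction on the order. Off the exceptional intervals created so far, quasi-periodization and periodization perform the same operations: the same periodization on regular stopping intervals, and the same continuation to grandchildren with the source map $F$ as in \eqref{eq:F-association}. Hence the two functions agree there.

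Beyond order $r$, a starting interval $I$ of order $r+1$ has source $F(I)\in\mathcal D_{2r+1}$. Since $f$ is constant on $F(I)$, $f_{r}|_I$ is constant. Periodizing a constant function leaves it unchanged, and on the exceptional intervals of $I$ quasi-periodization inserts the average of this constant, which is the same constant. So later orders create no discrepancy, and only exceptional intervals of the first $r$ orders matter. The same reasoning applies verbatim to $g$ and $W$.

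The main obstacle is the bookkeeping in Step 4. One must check that the source assignment $F$ for quasi-periodization coincides with that of ordinary periodization on regular branches. One must also make precise that constancy on $\mathcal D_{2r+1}$ halts all effective modification after order $r$. Steps 1–3 are routine.
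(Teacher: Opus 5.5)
Your proposal is correct and takes the same approach as the paper. The norm identities come from composing with a measure-preserving piecewise affine map, with matrix powers commuting with that composition. The measure bound comes from the two exceptional stopping intervals, a fraction $2^{1-N_k}$ of each starting interval, summed over the first $r$ orders. Your Step 4 also supplies detail the paper leaves implicit under ``by definition'': that the source map agrees on regular branches, and that constancy on $\mathcal D_{2r+1}$ means orders beyond $r$ create no discrepancy.
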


\begin{proof}
	Ordinary periodization is composition with a measure-preserving
	piecewise affine map. Matrix powers commute with this composition, so weighted
	integrals are preserved, giving
	\eqref{eq:simultaneous-periodization}.
	
	At order \(k\), a starting interval is divided into \(2^{N_k}\)
	stopping intervals, of which two are exceptional. Hence exceptional
	intervals occupy a fraction \(2^{1-N_k}\) of each starting interval.
	Since the starting intervals at order \(k\) have total length at
	most \(1\), summing over \(k=1,\dots,r\) gives the bound on
	\(|E_{\vec N}|\). Outside \(E_{\vec N}\), the two constructions agree
	by definition.
\end{proof}

\subsection{Remodeling: the distributional version}

In \cite[\S7.2.3]{domelevo-petermichl-treil-volberg-2024}, the
remodeling is performed directly on the weights: after the initial
quasi-periodization, on each exceptional stopping interval \(I\) of
order \(k\) with associated interval
\[
J=F(I)\in D_{2k-1},
\]
one replaces the constant value by the pullback weights
\[
W\circ\psi_{J,I},\qquad V\circ\psi_{J,I},
\qquad V:=W^{-1/(p-1)},
\]
and then performs the iterated quasi-periodization
\(\mathcal QP^{\vec 2}\), where \(\vec 2=(2,2,\ldots)\). This
produces new exceptional stopping intervals, on which the procedure
is repeated. At each individual remodeling step with frequency \(2\),
the two exceptional stopping intervals occupy half of the interval
being treated.
Passing to the limit gives the remodeled weights
\[
\widetilde V=\varrho^{\vec N}V,\qquad
\widetilde W=\varrho^{\vec N}W,
\]
with \(\widetilde V=\widetilde W^{-1/(p-1)}\) a.e. For \(p=2\), this
is the case \(\widetilde V=\widetilde W^{-1}\) treated in
\cite[\S7.2.3]{domelevo-petermichl-treil-volberg-2024}.

The purpose of this subsection is to describe the same construction
in distributional form. The reason is that the local \(A_p\)
expression depends only on the distribution of the matrix
values on an interval, not on their pointwise arrangement.
Consequently, instead of tracking the weight \(W\) itself, we may
track the probability measure \(\mu_J\) of \(W\) on the associated
interval \(J\).

More precisely, the correspondence between the two descriptions is:
\begin{itemize}
	\item the pullback weight \(W\circ\psi_{J,I}\) is replaced by its
	law \(\mu_J\);
	\item the four restrictions of \(W\) to
	\(\operatorname{ch}^2(J)=\{J_1,\dots,J_4\}\) are replaced by the
	four laws \(\mu_{J_1},\dots,\mu_{J_4}\);
	\item at each insertion on an interval corresponding to \(J\),
	the exceptional stopping intervals retain \(\mu_J\), while the
	four grandchildren of each regular stopping interval receive
	the laws \(\mu_{J'}\), \(J'\in\operatorname{ch}^2(J)\), in
	left-to-right order.
\end{itemize}
The average identity
\begin{equation}
	\mu_J=\frac14\sum_{J'\in\operatorname{ch}^2(J)}\mu_{J'},
	\qquad
	\mu_{J'}\le4\mu_J
	\quad(J'\in\operatorname{ch}^2(J)).
	\label{eq:grandchild-laws}
\end{equation}
Thus the measure-valued function constructed below is exactly the
distributional counterpart of the recursive weight construction of
\cite[\S7.2.3]{domelevo-petermichl-treil-volberg-2024}. In
particular, its limit encodes the same \(A_p\)-relevant information
as the remodeled weight \(\widetilde W=\varrho^{\vec N}W\). For the
\(A_p\) estimate it is therefore enough to work with the distributional
limit. For the norm-tracking lemma below we use the pointwise
realization \(W^\sharp=\varrho^{\vec N}W\), obtained by assigning to
each inserted interval the corresponding pullback
\(W\circ\psi_{J,I}\); the distributional construction records its law
on each associated interval.

For clarity, extend every intermediate law periodically to the real
line. Write
\[
\mathcal D_{\R}
=\bigl\{[j2^{-m},(j+1)2^{-m}):
\ j,m\in\mathbb Z\bigr\}.
\]
Let \(\mathcal D_{\mathrm{sd}}\) be the collection of all unions of
two adjacent members of \(\mathcal D_{\R}\) of equal length. The two members
need not have the same dyadic parent.

\begin{lemma}\label{lem:boundary-geometry}
	Let \(L=I_1\cup I_2\in\mathcal D_{\mathrm{sd}}\) and let
	\(I\in \mathcal D_{\R}\). Suppose that
	\[
	L\cap I\ne\varnothing,\qquad I\nsubseteq L,\qquad L\nsubseteq I,
	\]
	up to endpoints of measure zero. Then \(D=L\cap I\) is one of
	\(I_1,I_2\), and \(D\) is a dyadic subinterval sharing an endpoint
	with \(I\).
	
	Moreover, assume that  \(s=2^{-n}|I|\) is the stopping length of a
	remodeling step on \(I\). Then
	\begin{enumerate}[\rm (i)]
		\item  \(D\) is contained in the exceptional
		stopping interval at that endpoint when \(|D|\le s\);
		\item   \(D\) is a union of complete stopping intervals when \(|D|>s\).
		
	\end{enumerate}
\end{lemma}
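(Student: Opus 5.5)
The plan is to use only the nesting property of dyadic intervals together with the description of stopping intervals as dyadic descendants of $I$ of a fixed generation. Since $I_1,I_2\in\mathcal D_{\R}$, each of them is either disjoint from $I$, contained in $I$, or contains $I$. I would first rule out the third option: if $I\subseteq I_j$ for some $j$, then $I\subseteq L$, which contradicts the hypothesis. Hence each $I_j$ is either disjoint from $I$ or contained in $I$. Both cannot be contained in $I$, since then $L\subseteq I$. Both cannot be disjoint from $I$, since $L\cap I\neq\varnothing$. Therefore exactly one of them, say $D=I_j$, lies in $I$, the other is disjoint from $I$, and $D=L\cap I$. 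This $D$ is dyadic, and it is a proper subinterval of $I$ because $L\nsubseteq I$.

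Next I would show that $D$ shares an endpoint with $I$. Write $I_1=[a-h,a)$ and $I_2=[a,a+h)$ with $h=|D|$. Take the case $D=I_2$; the case $D=I_1$ is symmetric. Then $I_1\cap I=\varnothing$ while $a\in I$, and $I$ is an interval. So $I$ contains no point to the left of $a$, which means $a$ is the left endpoint of $I$. Thus $D=[a,a+h)$ starts at the left endpoint of $I$. The same argument in the case $D=I_1$ shows that the right endpoint of $D$ equals the right endpoint of $I$.

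For the statements about stopping intervals, the stopping intervals of a remodeling step on $I$ are the intervals of $\operatorname{ch}^n(I)$, which all have length $s=2^{-n}|I|$. Dyadic descendants of $I$ of generation $n$ tile $I$. Since $D$ is a dyadic subinterval of $I$, either $|D|\le s$ and $D$ lies inside a unique stopping interval, or $|D|>s$ and $D$ is a union of complete stopping intervals; this gives (ii). In case (i), the stopping interval containing $D$ contains the common endpoint of $D$ and $I$, so it touches the boundary of $I$ and is, by definition, the exceptional stopping interval at that endpoint. I do not expect any real obstacle here. The only point needing care is the measure-zero convention on endpoints, which is harmless because all intervals are half-open and the inclusions are understood up to null sets.
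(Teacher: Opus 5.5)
Your proof is correct and takes essentially the same approach as the paper. You use dyadic nesting to show that exactly one of $I_1,I_2$ lies in $I$, adjacency to force the shared endpoint, and the tiling of $I$ by its generation-$n$ descendants to obtain (i) and (ii). The differences are cosmetic: the paper first derives $|I_i|<|I|$ before concluding containment, and it asserts the shared endpoint without your explicit coordinate argument.
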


\begin{proof}
	Two dyadic intervals in the same grid are either disjoint or nested.
	If \(|I_1|\ge|I|\), then a nonempty intersection \(I_i\cap I\) would
	imply \(I\subseteq I_i\subseteq L\), contrary to \(I\nsubseteq L\).
	Thus \(|I_1|<|I|\), and similarly \(|I_2|<|I|\). Every \(I_i\)
	meeting \(I\) is therefore contained in \(I\). Since \(L\nsubseteq
	I\), exactly one of \(I_1,I_2\) meets \(I\). Adjacency forces that
	member to touch an endpoint of \(I\).
	
	If \(|D|\le s\), dyadic nesting puts \(D\) inside the exceptional
	stopping interval at that endpoint. If \(|D|>s\), the stopping grid
	refines \(D\) and partitions it into complete stopping intervals.
\end{proof}

Let \(\mathcal A\) be the finite set of positive definite matrices
attained by \(W\). For each associated interval \(J\in D\), define
the probability measure \(\mu_J\) on \(\mathcal A\) by
\[
\mu_J(\{A\})
=\frac{|\{x\in J:W(x)=A\}|}{|J|},
\qquad A\in\mathcal A.
\]
Equivalently,
\[
\mu_J=\frac1{|J|}\int_J\delta_{W(x)}\,dx,
\]
where \(\delta_A\) is the Dirac probability measure at \(A\). This
use of \(\delta_A\) is distinct from the scalar parameters
\(\delta_n\) in the construction.

For a probability measure \(\mu\) on \(\mathcal A\), put
\begin{equation}
	\widetilde \Phi_p(\mu)
	=\int_{\mathcal A}
	\left(\int_{\mathcal A}
	\|A^{1/p}B^{-1/p}\|_{\op}^{q}\,d\mu(B)\right)^{p-1}
	d\mu(A).
	\label{eq:distribution-functional}
\end{equation}
Then
\[
\widetilde \Phi_p(\mu_J)=\Phi_{p,J}(W).
\]
For probability measures \(\rho,\mu\) on \(\mathcal A\),
domination gives
\begin{equation}
	\rho\le C\mu
	\quad\Longrightarrow\quad
	\widetilde \Phi_p(\rho)\le C^p\widetilde \Phi_p(\mu).
	\label{eq:distribution-domination}
\end{equation}
Indeed, the inner integral increases by at most a factor \(C\), and
the outer integral by at most a factor \(C\); the total factor is
\(C^{p-1}C=C^p\).

Let $K_m$ be the starting interval selected at the $m$-th step of the
enumeration of remodeling steps. We define
\[
J^{(m)}(x):=F(K_m)\quad(x\in K_m),
\qquad
\nu^{(m)}|_{K_m}:=\mu_{F(K_m)}=\mu_{J^{(m)}}|_{K_m}.
\]
Thus the measures $\mu_J$ are fixed by the source weight $W$;
only the source-label map $J^{(m)}$ changes with $m$.
For an interval $L\subset\mathbb R$ with $0<|L|<\infty$, define
\[
\nu_L^{(m)}
:=\frac1{|L|}\int_L\mu_{J^{(m)}(x)}\,dx,
\]
where the integral is understood atomwise:
\[
\nu_L^{(m)}(\{A\})
=\frac1{|L|}\int_L
\mu_{J^{(m)}(x)}(\{A\})\,dx,
\qquad A\in\mathcal A.
\]

\begin{lemma}\label{ex-W}
	Let \(W\) be a positive definite finite dyadic step weight on
	\(I_0\). Fix the initial depth and frequencies as above. Then the
	construction yields a measurable function
	\(\widetilde W:I_0\to\mathcal A\) such that
	\[
	\nu^{(m)}(x)\longrightarrow\delta_{\widetilde W(x)}
	\quad\text{a.e. on }I_0.
	\]
	Moreover,
	\[
	\widetilde W=\mathcal P^{\vec N}W
	\quad\text{a.e. on }I_0\setminus E_{\vec N}.
	\]
\end{lemma}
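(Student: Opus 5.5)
We will show that every point \(x\) is eventually \emph{frozen}: after finitely many remodeling steps, or in a nested limit of them, the source label \(J^{(m)}(x)\) stays on a dyadic interval of the original finite step weight \(W\) on which \(W\) is constant. The argument is organized around the finite depth \(r\) of \(W\). Choose \(r\) as in Lemma~\ref{lem:periodization}, so that \(W\) is constant on every interval of \(\mathcal D_{2r+1}\). Then \(\mu_J=\delta_{A}\) is a Dirac mass whenever \(J\in\mathcal D_{2k-1}\) with \(k\ge r+1\), where \(A\) is the value of \(W\) on \(J\).

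The first step is monotonicity of the labels. Each remodeling step acts on a starting interval \(K_m\) with source \(F(K_m)\in\mathcal D_{2k-1}\). Points in regular stopping subintervals receive, on grandchildren, the labels \(J'\in\operatorname{ch}^2(F(K_m))\), which lie one order deeper. Points in exceptional stopping intervals keep \(F(K_m)\), but they are subsequently re-treated by a remodeling insertion with frequency \(2\). Hence along the enumeration, for fixed \(x\), the sequence \(J^{(m)}(x)\) is non-increasing under inclusion. It strictly descends by two generations each time \(x\) lies in a regular stopping interval of the step treating it.

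The second step, which is the main point, is to show that for a.e. \(x\) a strict descent occurs at least \(r+1\) times. At each remodeling step of frequency \(2\), the exceptional stopping intervals occupy half of the treated interval. So the set of points that remain exceptional for \(j\) consecutive treatments of the same source order has measure at most \(2^{-j}\) times the measure of the starting set. The initial quasi-periodization contributes \(E_{\vec N}\) of measure \(\le\sum_k2^{1-N_k}\). Every point of \(E_{\vec N}\) is subsequently treated infinitely often. By Borel--Cantelli-type counting (a geometric bound at each of the finitely many orders \(k\le r\)), a.e. point escapes to a regular interval at each order. Hence a.e. \(x\) reaches a label \(J\in\mathcal D_{2k-1}\) with \(k\ge r+1\) after finitely many steps.

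From then on \(\nu^{(m)}(x)=\mu_{J^{(m)}(x)}=\delta_{A}\) with the same \(A\). This holds because all deeper labels are subintervals of a constancy interval of \(W\), so \(\mu\) stays equal to \(\delta_A\). Define \(\widetilde W(x):=A\). It is measurable, since it is an a.e. limit of simple functions constant on finitely many intervals at each stage. The main obstacle is the bookkeeping that exceptional points are indeed re-treated and that the measure bound \(1/2\) per step propagates uniformly. I would verify this by induction over orders using the description in \cite[\S7.2.3]{domelevo-petermichl-treil-volberg-2024}.

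Finally, for the identity \(\widetilde W=\mathcal P^{\vec N}W\) off \(E_{\vec N}\), take \(x\notin E_{\vec N}\). In the first \(r\) orders such an \(x\) lies only in regular stopping intervals. By \eqref{eq:F-association}, its label after order \(r\) is the interval \(F(R)\in\mathcal D_{2r+1}\) that satisfies \(\mathcal P^{\vec N}W|_R=W\circ\psi_{F(R),R}\). Since \(W\) is constant on \(F(R)\), this constant is both \(\widetilde W(x)\) and \(\mathcal P^{\vec N}W(x)\), using \eqref{eq:exceptional-set}.
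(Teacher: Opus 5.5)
Your proposal is correct and follows essentially the same route as the paper. Each exceptional refinement keeps a fraction at most $\frac12$, so the set of points stuck forever on a non-Dirac label is null. At most $r$ regular advances from $\mathcal D_1$ reach a constancy interval of $W$, where the Dirac mass is frozen, and points outside $E_{\vec N}$ are already frozen at the value of $\mathcal P^{\vec N}W$ after the initial stage. The only slip is minor: $r$ descents, not $r+1$, suffice to pass from $\mathcal D_1$ to $\mathcal D_{2r+1}$. Bounding the stuck set order by order rather than by a countable union over starting intervals, as the paper does, is an equivalent bookkeeping choice.
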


\begin{proof}
	For a starting interval \(I\) with associated interval \(J=F(I)\)
	whose measure \(\mu_J\) is not a Dirac mass, let \(E_s(I,J)\) be
	the set of points retaining the associated interval \(J\) through
	\(s\) consecutive exceptional refinements, with
	\(E_0(I,J)=I\). Every remodeling step has frequency \(n\ge2\), so
	its exceptional intervals occupy a fraction 	$
	\frac2{2^n}\le\frac12.$
	Hence
	\[
	E_{s+1}(I,J)\subseteq E_s(I,J),
	\qquad
	|E_{s+1}(I,J)|\le\frac12|E_s(I,J)|,
	\]
	and therefore
	\[
	|E_s(I,J)|\le2^{-s}|I|.
	\]
	It follows that
	\[
	B_I:=\bigcap_{s\ge1}E_s(I,J),
	\qquad |B_I|=0.
	\]
	There are only countably many starting intervals. Thus
	\[
	B:=\bigcup_I B_I,
	\qquad |B|=0,
	\]
	where the union runs over all starting intervals whose associated
	measures are not Dirac masses.
	
	Choose \(r\) so that \(W\) is constant on every
	\(J\in \mathcal D_{2r+1}\). The associated interval changes according to
	\[
	J\longmapsto
	\begin{cases}
		J,&\text{on an exceptional interval},\\
		J'\in\operatorname{ch}^2(J),
		&\text{on a regular grandchild}.
	\end{cases}
	\]
	Starting from an associated interval in \(\mathcal D_1\), at most \(r\)
	regular advances are needed to reach an interval on which \(W\) is
	constant.
	
	Fix \(x\notin B\). Whenever its associated measure is not a Dirac
	mass, \(x\) leaves the exceptional refinements after finitely many
	steps and makes a regular advance. Since at most \(r\) advances are
	needed, \(x\) reaches a Dirac mass after finitely many operations.
	The enumeration treats every interval eventually, so these
	operations occur at a finite stage. Moreover,
	\[
	W|_J=A
	\quad\Longrightarrow\quad
	\mu_J=\mu_{J'}=\delta_A
	\qquad(J'\in\operatorname{ch}^2(J)).
	\]
	Thus subsequent steps leave the Dirac mass unchanged. For almost
	every \(x\), there exist \(M(x)<\infty\) and \(A(x)\in\mathcal A\)
	such that
	\[
	\nu^{(m)}(x)=\delta_{A(x)}
	\qquad(m\ge M(x)).
	\]
	Define \(\widetilde W(x):=A(x)\), assigning a fixed
	\(A_*\in\mathcal A\) on \(B\). Then
	\[
	\nu^{(m)}(x)\longrightarrow\delta_{\widetilde W(x)}
	\quad\text{a.e.}
	\]
	
	For each \(A\in\mathcal A\),
	\[
	\nu^{(m)}(x)(\{A\})
	\longrightarrow
	\mathbf 1_{\{\widetilde W(x)=A\}}
	\quad\text{a.e.},
	\]
	which shows that \(\widetilde W\) is measurable. Since
	\(\mathcal A\) is finite,
	\[
	\lambda_-:=\min_{A\in\mathcal A}\lambda_{\min}(A)>0,
	\qquad
	\lambda_+:=\max_{A\in\mathcal A}\lambda_{\max}(A)<\infty,
	\]
	and
	\[
	\lambda_-\,\mathrm{Id}
	\le\widetilde W(x)\le\lambda_+\,\mathrm{Id}.
	\]
	
	Outside \(E_{\vec N}\), the initial stage already gives the Dirac
	mass \(\delta_{(\mathcal P^{\vec N}W)(x)}\). All further
	modifications are confined to \(E_{\vec N}\), so
	\[
	\widetilde W=\mathcal P^{\vec N}W
	\quad\text{a.e. on }I_0\setminus E_{\vec N}.
	\]
	Thus we complete the proof.
\end{proof}

\begin{lemma}\label{lem:weight-extension}
	Let \(\widetilde W\) be the \(1\)-periodic extension of the weight
	constructed in Lemma~\ref{ex-W}. Then
	\[
	[\widetilde W]_{A_p}
	\le16^p[W]_{A_p}^{\mathrm{dy}}.
	\]
\end{lemma}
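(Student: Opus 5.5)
The plan is to reduce $\Phi_{p,L}(\widetilde W)$, for an arbitrary finite interval $L\subset\R$, to $\widetilde\Phi_p$ of a law dominated by a constant multiple of some single source law $\mu_J$. Domination \eqref{eq:distribution-domination} then finishes the argument.

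\textbf{Reduction to $L\in\mathcal D_{\mathrm{sd}}$.} Every finite interval $L$ is contained in some $L'\in\mathcal D_{\mathrm{sd}}$ with $|L'|\le 4|L|$: take the dyadic length $2^{-m}$ with $2^{-m}\ge|L|>2^{-m-1}$; then $L$ meets at most two adjacent dyadic intervals of length $2^{-m}$. The normalized restriction of Lebesgue measure to $L$ is at most $4$ times that on $L'$, so the law of $\widetilde W$ on $L$ is $\le 4\times$ its law on $L'$. Hence $\Phi_{p,L}(\widetilde W)\le 4^p\Phi_{p,L'}(\widetilde W)$. It remains to show $\Phi_{p,L'}(\widetilde W)\le 4^p[W]^{\mathrm{dy}}_{A_p}$ for $L'\in\mathcal D_{\mathrm{sd}}$, which gives the constant $16^p$.

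\textbf{The law on $L'=I_1\cup I_2$.} Write $\nu_{L'}=\lim_m\nu^{(m)}_{L'}$, which is the law of $\widetilde W$ on $L'$ by Lemma~\ref{ex-W} and dominated convergence (the atom set $\mathcal A$ is finite). I will show that for every $m$, once the construction has processed the relevant intervals, $\nu^{(m)}_{L'}\le 4\mu_J$ for a single dyadic $J$ in the source grid with $\Phi_{p,J}(W)\le[W]^{\mathrm{dy}}_{A_p}$.

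The key observation is that at each stage the label function $J^{(m)}$ is built so that the law of the final weight on any starting or stopping interval $K$ equals $\mu_{F(K)}$. This is the averaging identity \eqref{eq:grandchild-laws}, together with the fact that exceptional intervals keep $\mu_{F(K)}$ and regular ones distribute the four grandchild laws in equal proportions. Consider the smallest starting or stopping interval $K$ of the construction, including the periodic copies and the top level $[j,j+1)$ with label $I_0$, that contains $L'$.

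\emph{Case $L'\subseteq K$ with $|K|\le 2|L'|$.} Then the law on $L'$ is at most $2$ times the law on $K$, which is $\mu_{F(K)}$.

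\emph{Case $L'$ is much smaller than $K$.} Then $L'$ lies inside one piece at the next level (a stopping interval, a grandchild, or an exceptional interval), unless it straddles the boundary between two adjacent pieces. Lemma~\ref{lem:boundary-geometry} handles the straddling configurations: each half $I_i$ either sits inside an exceptional interval or is a union of complete stopping intervals. The adjacent pieces carry laws that are either $\mu_{F(K)}$ (exceptional, or periodized copies of the same source) or grandchild laws $\mu_{J'}\le 4\mu_{F(K)}$. Thus each half has law $\le 4\mu_{F(K)}$, and so does their average. The periodic extension across integer points is the same situation at the top level, with $F=I_0$.

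Applying \eqref{eq:distribution-domination} with $C=4$ gives $\Phi_{p,L'}(\widetilde W)\le 4^p\widetilde\Phi_p(\mu_J)=4^p\Phi_{p,J}(W)$, and the latter is at most $4^p[W]^{\mathrm{dy}}_{A_p}$.

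\textbf{Main obstacle.} The hardest step is the case analysis locating $L'$ relative to the nested starting and stopping structure, uniformly over all remodeling steps. In particular, when $L'$ straddles two adjacent regular stopping intervals, their grandchildren carry laws of different grandchildren $J',J''$ of $F(K)$. I would handle this by comparing with $\mu_{F(K)}$ rather than with a finer $\mu$, as above, and argue by induction on the enumeration $m$ that no additional factor accumulates. The accumulation is avoided because the comparison is always made with the source law of the single smallest containing construction interval, never chained.
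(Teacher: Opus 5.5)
\textbf{Verdict: genuine gap (fillable).} Your outer frame matches the paper's. Reducing an arbitrary interval to some $L'\in\mathcal D_{\mathrm{sd}}$ with $|L'|\le4|L|$ is the paper's Step~3. The target $\nu_{L'}\le4\mu_{F(K)}$, followed by \eqref{eq:distribution-domination}, gives the same constant $4^p\cdot4^p$.

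The gap is in how you obtain that domination for the \emph{final} weight. You apply Lemma~\ref{lem:boundary-geometry} once, to the pieces of $K$ adjacent to the straddled point. You then assign to each half $I_i$ the label of the piece containing it (a boundary exceptional interval $E_i$, or a boundary grandchild) as it stands when $K$ is processed. But that piece is itself remodeled afterwards, infinitely often and in nested fashion. The law of $\widetilde W$ on $E_i$ is indeed its label law. Its law on a proper subinterval of $E_i$, however, is in general not dominated by any multiple of that label: deep inside $E_i$ the limit weight is constant on small intervals, which gives Dirac laws $\delta_A$ with $\mu_{F(E_i)}(\{A\})$ arbitrarily small. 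So ``$I_i$ lies in an interval carrying $\mu_{F(K)}$'' does not control the law of $\widetilde W$ on $I_i$. What saves you is that $I_i$ sits at an \emph{endpoint} of that interval, and this must be exploited at every later stage, not just once. Your closing paragraph recognizes that uniformity over the remodeling steps is the crux. However, ``compare with the smallest containing interval, never chained'' does not state the invariant that makes it work.

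The missing ingredient is case (iii) of the paper's Step~1. Suppose an interval $P$ is updated and $L'$ crosses $\partial P$. Then $L'\cap P$ is either inside the boundary exceptional stopping interval of $P$, whose label stays $\mu_{F(P)}$, or a union of complete stopping intervals, each averaging to $\mu_{F(P)}$. Either way $\nu_{L'}$ is unchanged. Updates with $P\subseteq L'$ or $P\cap L'=\varnothing$ also leave it unchanged. By minimality of $K$, no interval processed after $K$ strictly contains $L'$. Hence the law on $L'$ is frozen at its value right after $K$ is processed, which is $\le4\mu_{F(K)}$. The bound then passes to $\widetilde W$ by the dominated-convergence limit you already invoke.

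Equivalently, in your static language, you can recurse through the nested boundary exceptional intervals at the straddled endpoint. For a regular stopping interval, first pass to its boundary grandchild. These nested intervals shrink by a factor of at least $4$ while $|I_i|$ is fixed. So after finitely many steps $I_i$ is a union of complete stopping intervals, each with final law equal to the label of the piece, hence $\le4\mu_{F(K)}$.

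The paper's formulation is somewhat cleaner. It carries the bound for all $L\in\mathcal D_{\mathrm{sd}}$ through every stage at once. Whenever the updated interval strictly contains $L$ (case (ii)), it re-derives the bound from the fixed source bound. It therefore never has to locate a minimal containing interval.
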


\begin{proof}
	\emph{Step 1: estimate at finite stages.}
	We prove that
	\begin{equation}
		\widetilde \Phi_p(\nu_L^{(m)})\le4^p[W]_{A_p}^{\mathrm{dy}}
		\qquad(m\ge0,\ L\in\mathcal D_{\mathrm{sd}}).
		\label{eq:law-invariant}
	\end{equation}
	Initially,
	\[
	\mu_{I_0}
	=\frac12\bigl(\mu_{I_{0,+}}+\mu_{I_{0,-}}\bigr),
	\qquad
	\mu_{I_{0,\pm}}\le2\mu_{I_0}.
	\]
	Since \(\nu^{(0)}\) takes only these two values,
	\[
	\nu_L^{(0)}
	=\frac1{|L|}\int_L\nu^{(0)}(x)\,dx
	\le2\mu_{I_0}.
	\]
	By \eqref{eq:distribution-domination},
	\[
	\widetilde \Phi_p(\nu_L^{(0)})
	\le2^p\widetilde \Phi_p(\mu_{I_0})
	=2^p\Phi_{p,I_0}(W)
	\le2^p[W]_{A_p}^{\mathrm{dy}}.
	\]
	This is stronger than \eqref{eq:law-invariant} for \(m=0\).
	
	Assume that \eqref{eq:law-invariant} holds at stage \(m\).
	If the construction has terminated, then
	\(\nu^{(m+1)}=\nu^{(m)}\) and the assertion is immediate.
	Otherwise, the next periodic stage updates all translates \(K_m+j\),
	\(j\in\mathbb Z\). We first verify that a single such update
	preserves the bound.
	
	Let \(K=K_m+j\), and suppose that an auxiliary probability law
	\(\eta\) satisfies
	\[
	\eta|_K=\mu_{F(K)},
	\qquad
	\widetilde \Phi_p(\eta_L)\le4^p[W]_{A_p}^{\mathrm{dy}}
	\quad(L\in\mathcal D_{\mathrm{sd}}).
	\]
	Let \(\eta'\) be obtained by applying the remodeling step on \(K\)
	with frequency \(n_m\), leaving \(\eta\) unchanged elsewhere.
	On an exceptional stopping interval the law remains \(\mu_{F(K)}\);
	on a regular stopping interval its average is
	\[
	\frac14\sum_{J'\in\operatorname{ch}^2(F(K))}\mu_{J'}
	=\mu_{F(K)}.
	\]
	Consequently,
	\[
	\eta'_R=\mu_{F(K)}
	\quad(R\in\operatorname{ch}^{n_m}(K)),
	\qquad
	\int_K\eta'(x)\,dx
	=|K|\mu_{F(K)}
	=\int_K\eta(x)\,dx.
	\]
	
	Fix \(L\in\mathcal D_{\mathrm{sd}}\). There are three cases.
	\begin{enumerate}[\rm (i)]
		\item If \(K\subseteq L\), then
		\[
		|L|(\eta'_L-\eta_L)
		=\int_K(\eta'-\eta)\,dx=0.
		\]
		If \(K\cap L=\varnothing\), then \(\eta'|_L=\eta|_L\).
		Thus in either case
		\[
		\eta'_L=\eta_L,
		\qquad
		\widetilde \Phi_p(\eta'_L)\le4^p[W]_{A_p}^{\mathrm{dy}}.
		\]
		
		\item If \(L\subsetneq K\), every law placed on \(K\) is either
		\(\mu_{F(K)}\) or one of its source grandchild laws. By
		\eqref{eq:grandchild-laws},
		\[
		\eta'(x)\le4\mu_{F(K)}
		\quad(x\in K),
		\qquad
		\eta'_L\le4\mu_{F(K)}.
		\]
		Hence \eqref{eq:distribution-domination} gives
		\[
		\begin{aligned}
			\widetilde \Phi_p(\eta'_L)
			&\le4^p\widetilde \Phi_p(\mu_{F(K)})\\
			&=4^p\Phi_{p,F(K)}(W)
			\le4^p[W]_{A_p}^{\mathrm{dy}}.
		\end{aligned}
		\]
		This uses the fixed source bound, not the previous bound for
		\(\eta_L\), so the factor \(4^p\) does not accumulate.
		
		\item Suppose that \(L\cap K\ne\varnothing\), \(K\nsubseteq L\),
		and \(L\nsubseteq K\). Put \(D=L\cap K\). By
		Lemma~\ref{lem:boundary-geometry}, either \(D\) lies in a boundary
		exceptional stopping interval, where the law remains
		\(\mu_{F(K)}\), or \(D\) is a union of complete stopping
		intervals, each with average law \(\mu_{F(K)}\). In both cases,
		\[
		\eta'_D=\mu_{F(K)}=\eta_D.
		\]
		Since \(\eta'=\eta\) on \(L\setminus K\),
		\[
		|L|(\eta'_L-\eta_L)
		=|D|(\eta'_D-\eta_D)=0.
		\]
		Therefore
		\[
		\widetilde \Phi_p(\eta'_L)
		=\widetilde \Phi_p(\eta_L)
		\le4^p[W]_{A_p}^{\mathrm{dy}}.
		\]
	\end{enumerate}
	Thus a single-copy update preserves the bound for every
	\(L\in\mathcal D_{\mathrm{sd}}\).
	
	Now fix \(L\in\mathcal D_{\mathrm{sd}}\). The set
	\[
	\mathcal Z_L
	:=\{j\in\mathbb Z:|L\cap(K_m+j)|>0\}
	\]
	is finite. Starting from \(\eta=\nu^{(m)}\), perform the updates
	on \(K_m+j\), \(j\in\mathcal Z_L\), one at a time, using the
	resulting \(\eta'\) as the next \(\eta\). The translates have
	disjoint interiors, so each untreated copy still carries
	\(\mu_{F(K_m)}\). The single-copy argument applies at every step.
	After these finitely many updates, the auxiliary law agrees with
	\(\nu^{(m+1)}\) almost everywhere on \(L\); the other translates
	do not affect \(L\). Hence
	\[
	\widetilde \Phi_p(\nu_L^{(m+1)})
	\le4^p[W]_{A_p}^{\mathrm{dy}}.
	\]
	Since \(L\) was arbitrary, induction proves
	\eqref{eq:law-invariant} at every periodic stage.
	
	\emph{Step 2: passage to the limit.}
	By Lemma~\ref{ex-W} and periodicity,
	\[
	\nu^{(m)}(x)\longrightarrow\delta_{\widetilde W(x)}
	\quad\text{a.e. on }\mathbb R.
	\]
	For each \(A\in\mathcal A\),
	\[
	0\le\nu^{(m)}(x)(\{A\})\le1,
	\qquad
	\nu^{(m)}(x)(\{A\})
	\longrightarrow
	\mathbf 1_{\{\widetilde W(x)=A\}}.
	\]
	Fix a finite interval \(L\). Dominated convergence gives
	\[
	\nu_L^{(m)}(\{A\})
	=\frac1{|L|}\int_L\nu^{(m)}(x)(\{A\})\,dx
	\longrightarrow
	\frac1{|L|}\int_L
	\mathbf 1_{\{\widetilde W(x)=A\}}\,dx
	=\mu_L^{\widetilde W}(\{A\}),
	\]
	where
	\[
	\mu_L^{\widetilde W}
	:=\frac1{|L|}\int_L\delta_{\widetilde W(x)}\,dx.
	\]
	
	Write \(\mathcal A=\{A_1,\ldots,A_M\}\) and set
	\[
	a_i^{(m)}:=\nu_L^{(m)}(\{A_i\}),
	\qquad
	a_i:=\mu_L^{\widetilde W}(\{A_i\}),
	\qquad
	c_{ij}:=\|A_i^{1/p}A_j^{-1/p}\|_{\op}^q.
	\]
	All \(c_{ij}\) are finite, and \(a_i^{(m)}\to a_i\). Since
	\(p>1\),
	\[
	\widetilde \Phi_p(\nu_L^{(m)})
	=\sum_{i=1}^M a_i^{(m)}
	\left(\sum_{j=1}^M c_{ij}a_j^{(m)}\right)^{p-1}
	\longrightarrow
	\sum_{i=1}^M a_i
	\left(\sum_{j=1}^M c_{ij}a_j\right)^{p-1}
	=\widetilde \Phi_p(\mu_L^{\widetilde W}).
	\]
	Passing to the limit in \eqref{eq:law-invariant}, we obtain
	\[
	\Phi_{p,L}(\widetilde W)
	=\widetilde \Phi_p(\mu_L^{\widetilde W})
	\le4^p[W]_{A_p}^{\mathrm{dy}}
	\qquad(L\in\mathcal D_{\mathrm{sd}}).
	\]
	
	\emph{Step 3: arbitrary intervals.}
	Let \(B\) be an interval of positive finite length. Choose a dyadic
	length \(h\) with \(|B|<h<2|B|\) if such an \(h\) exists, and
	otherwise take \(h=2|B|\). Then \(B\) is contained in the union
	\(L\) of at most two adjacent dyadic intervals of length \(h\).
	If \(B\) meets only one such interval, add an adjacent one. Then
	\[
	L\in\mathcal D_{\mathrm{sd}},
	\qquad
	B\subseteq L,
	\qquad
	|L|=2h\le4|B|.
	\]
	Hence
	\[
	\mu_B^{\widetilde W}
	\le\frac{|L|}{|B|}\mu_L^{\widetilde W}
	\le4\mu_L^{\widetilde W}.
	\]
	Applying \eqref{eq:distribution-domination} once more gives
	\[
	\Phi_{p,B}(\widetilde W)
	\le4^p\Phi_{p,L}(\widetilde W)
	\le16^p[W]_{A_p}^{\mathrm{dy}}.
	\]
	Taking the supremum over all such \(B\) yields
	\[
	[\widetilde W]_{A_p}
	\le16^p[W]_{A_p}^{\mathrm{dy}}.
	\]
	Since \(\Phi_{p,B}\) depends only on the law of the weight on
	\(B\), the same bound holds for the pointwise remodeled weight
	\(W^\sharp=\varrho^{\vec N}W\), which has the same law as
	\(\widetilde W\) on each associated interval.
\end{proof}

\begin{lemma}[Norm tracking]\label{lem:norm-tracking}
	Let \(W,f,g\) be as in Lemma~\ref{lem:periodization}. Let
	\(\widetilde W\) be as in lemma \ref{ex-W}.  Use the same initial frequencies and a common
	finite depth. Then, as the frequencies tend to infinity,
	\[
	\|\mathcal Q\mathcal P^{\vec N}f\|_{L^p(\widetilde W)}
	\longrightarrow\|f\|_{L^p(W)},
	\qquad
	\|\mathcal Q\mathcal P^{\vec N}g\|_{L^q(\widetilde W^{-q/p})}
	\longrightarrow\|g\|_{L^q(W^{-q/p})}.
	\]
\end{lemma}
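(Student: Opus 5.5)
Split the integral over \(I_0\) into the good set \(I_0\setminus E_{\vec N}\) and the exceptional set \(E_{\vec N}\), and show the exceptional part vanishes as the frequencies grow. On the good set, \(\mathcal Q\mathcal P^{\vec N}f=\mathcal P^{\vec N}f\) by \eqref{eq:exceptional-set}, and \(\widetilde W=\mathcal P^{\vec N}W\) a.e.\ by Lemma~\ref{ex-W}. Hence
\[
\int_{I_0\setminus E_{\vec N}}|\widetilde W^{1/p}\mathcal Q\mathcal P^{\vec N}f|^p
=\int_{I_0\setminus E_{\vec N}}|(\mathcal P^{\vec N}W)^{1/p}\mathcal P^{\vec N}f|^p .
\]
By \eqref{eq:simultaneous-periodization}, the right-hand side differs from \(\|f\|_{L^p(W)}^p\) by
\[
\int_{E_{\vec N}}|(\mathcal P^{\vec N}W)^{1/p}\mathcal P^{\vec N}f|^p .
\]
Since \(W\) and \(f\) are finite dyadic step functions, this integrand is bounded by \(\lambda_+\|f\|_\infty^p\). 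So this error is at most \(\lambda_+\|f\|_\infty^p|E_{\vec N}|\le\lambda_+\|f\|_\infty^p\sum_{k\le r}2^{1-N_k}\), which tends to \(0\).

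For the exceptional part, \(\mathcal Q\mathcal P^{\vec N}f\) on an exceptional stopping interval equals an average of the current function, so \(|\mathcal Q\mathcal P^{\vec N}f|\le\|f\|_\infty\) everywhere. Lemma~\ref{ex-W} gives \(\widetilde W\le\lambda_+\mathrm{Id}\), where \(\lambda_+\) depends only on the finite value set \(\mathcal A\) of \(W\) and not on \(\vec N\). Therefore
\[
\int_{E_{\vec N}}|\widetilde W^{1/p}\mathcal Q\mathcal P^{\vec N}f|^p\le\lambda_+\|f\|_\infty^p\,|E_{\vec N}|\to0 .
\]
The common finite depth \(r\) matters here: beyond order \(r\) all functions are constant on starting intervals. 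Quasi-periodization then changes nothing, so \(E_{\vec N}\) only involves the first \(r\) orders and the bound \eqref{eq:exceptional-set} applies. Adding the two parts gives the first convergence.

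The dual statement is identical, with \(g\), exponent \(q\) and weight \(\widetilde W^{-q/p}\). Since \(\widetilde W\ge\lambda_-\mathrm{Id}\) and \(\widetilde W\) has finitely many values, \(\widetilde W^{-q/p}\le\lambda_-^{-q/p}\mathrm{Id}\) uniformly in \(\vec N\). On the good set, \(\widetilde W^{-q/p}=(\mathcal P^{\vec N}W)^{-q/p}\), because matrix powers commute with the pointwise identity.

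The main obstacle is that the remodeling may place arbitrary values of \(\mathcal A\) on \(E_{\vec N}\), so \(\widetilde W\) there is not the periodized weight. The argument avoids this by using only the uniform bounds \(\lambda_\pm\) together with \(|E_{\vec N}|\to0\). One must also check that \(\mathcal Q\mathcal P^{\vec N}f\) stays bounded by \(\|f\|_\infty\), which holds because it takes only values of \(f\) and averages of \(f\) over intervals.
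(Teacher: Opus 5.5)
Your proposal is correct and follows essentially the same argument as the paper. Both use three ingredients: the quasi-periodized functions and the remodeled weight agree with their ordinarily periodized counterparts off $E_{\vec N}$; ordinary periodization preserves the weighted norms exactly; and the finite value set gives uniform eigenvalue bounds, so the error is at most $2\lambda_+\|f\|_\infty^p|E_{\vec N}|$ (the paper's $2M\|f\|_\infty^p|E_{\vec N}|$), which tends to $0$ since $|E_{\vec N}|\le\sum_{k\le r}2^{1-N_k}$.
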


\begin{proof}
	Since \(W\) has finitely many positive definite values, choose
	\(0<m\le M<\infty\) such that
	\[
	m\,\mathrm{Id}\le W\le M\,\mathrm{Id}.
	\]
	The same bounds hold for \(\widetilde W\) and
	\(\mathcal P^{\vec N}W\). Outside \(E_{\vec N}\) the remodeled and
	ordinary periodized weights agree, and so do the corresponding
	functions. By \eqref{eq:simultaneous-periodization},
	\[
	\begin{aligned}
		\left|\|\mathcal QP^{\vec N}f\|_{L^p(\widetilde W)}^p
		-\|f\|_{L^p(W)}^p\right|
		&\le2M\|f\|_\infty^p|E_{\vec N}|,\\
		\left|\|\mathcal QP^{\vec N}g\|_{L^q(\widetilde W^{-q/p})}^q
		-\|g\|_{L^q(W^{-q/p})}^q\right|
		&\le2m^{-q/p}\|g\|_\infty^q|E_{\vec N}|.
	\end{aligned}
	\]
	The right-hand sides tend to zero by \eqref{eq:exceptional-set}.
\end{proof}

\subsection{The Hilbert transform and the transfer identity}

We use the normalizations
\[
Hf(s)=\frac1\pi\,\mathrm{p.v.}\int_{\R}\frac{f(t)}{s-t}\,dt,
\qquad
H^{\mathbb T}f(s)
=\mathrm{p.v.}\int_0^1f(t)\cot\bigl(\pi(s-t)\bigr)\,dt,
\]
acting componentwise. Here \(\mathbb T=\R/\Z\) is identified with
\([0,1)\).

Retain the convention that \(I_+\) is the left child and
\[
h_I=|I|^{-1/2}(\one_{I_+}-\one_{I_-}).
\]
Set
\[
c_0=(H\one_{I_{0,+}},\one_{I_{0,-}})_{L^2(\R)}
\]
and
\[
c_1=(H^{\mathbb T}h_{I_0},h_{I_{0,+}})_{L^2(I_0)},
\qquad
c_2=(H^{\mathbb T}h_{I_{0,+}},h_{I_{0,-}})_{L^2(I_0)}.
\]
Translation by \(1/2\) on \(\mathbb T\) sends \(h_{I_0}\) to
\(-h_{I_0}\) and interchanges \(h_{I_{0,+}}\) and \(h_{I_{0,-}}\).
It commutes with \(H^{\mathbb T}\), and skew-adjointness gives
\[
c_2=-c_2.
\]
Thus \(c_2=0\). A direct computation gives
\[
H^{\mathbb T}h_{I_0}(s)=\frac2\pi\log|\tan(\pi s)|,
\]
and therefore
\[
c_1=\frac{4\sqrt2}{\pi}\int_0^{1/4}\log\tan(\pi s)\,ds<0,
\qquad c_2=0.
\]
Consequently, the dyadic model in the transfer identity is simply
\begin{equation}
	H^{dy}=c_1(\mathbb S-\mathbb S^*)+c_2\mathbb S_0
	=c_1(\mathbb S-\mathbb S^*).
	\label{eq:hdy}
\end{equation}
No additional estimate for \(\mathbb S_0\) is needed.

The following is the transfer identity of
\cite[Lemma~7.9]{domelevo-petermichl-treil-volberg-2024}, expressed
in our notation.

\begin{lemma}[Transfer identity]\label{lem:transfer}
	Let \(\mathbf f,\mathbf g\) be finite dyadic step functions on
	\(I_0\). There is a sequence of frequency vectors
	\(\vec N^{(j)}\), with \(N_k^{(j)}\to\infty\) for every relevant
	\(k\), such that
	\begin{equation}
		\begin{aligned}
			&(H\mathcal QP^{\vec N^{(j)}}\mathbf f,
			\mathcal QP^{\vec N^{(j)}}\mathbf g)_{L^2(\R)}\\
			&\qquad\longrightarrow
			(H^{dy}\mathbf f,\mathbf g)_{L^2(I_0)}
			+c_0\Bigl[
			(\langle\mathbf f\rangle_{I_{0,+}},
			\langle\mathbf g\rangle_{I_{0,-}})
			-(\langle\mathbf f\rangle_{I_{0,-}},
			\langle\mathbf g\rangle_{I_{0,+}})
			\Bigr].
		\end{aligned}
		\label{eq:transfer-identity}
	\end{equation}
	The inner products in the bracket are Euclidean.
\end{lemma}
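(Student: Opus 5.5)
The plan is to follow the structure of \cite[Lemma~7.9]{domelevo-petermichl-treil-volberg-2024}, adapted to our normalizations. First I reduce by bilinearity to pairs of Haar functions and constants. Write the finite Haar expansions
\[
\mathbf f=\langle\mathbf f\rangle_{I_0}\one_{I_0}+\sum_{I}\mathbf f_I h_I,\qquad
\mathbf g=\langle\mathbf g\rangle_{I_0}\one_{I_0}+\sum_{J}\mathbf g_J h_J .
\]
Quasi-periodization is linear and acts on each term: on the starting intervals of order $k$ it replaces $h_I$, for $I$ of generation $2k-1$ or $2k$, by a sum of shrunken copies $h_{R}$ or $h_{R_\pm}$ over regular stopping intervals $R$, suitably normalized. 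The exceptional stopping intervals carry only averages. Since both sides of \eqref{eq:transfer-identity} are bilinear and the expansions are finite, it suffices to compute the limit of $(H\mathcal QP^{\vec N}\phi,\mathcal QP^{\vec N}\gamma)$ for each pair of basis elements $\phi,\gamma$.

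Second, I classify the pairs by the relative position of the corresponding copies.
\begin{enumerate}[\rm (a)]
\item \emph{Copies at very different scales.} The smaller copy has mean zero, and the kernel of $H$ is smooth away from the diagonal. Summing over the $2^{N_k}$ copies, these contributions vanish as $N_k\to\infty$; this is the standard almost-orthogonality estimate.
\item \emph{Copies at the same scale but in different stopping intervals.} The sum over translates of a fixed pattern is a Riemann sum of a periodic kernel. As the frequency tends to infinity, it converges to the corresponding pairing for $H^{\mathbb T}$. This produces the constants $c_1$, $c_2$ on the pairs $(h_I,h_{I_\pm})$ for $I\in\mathcal D_{\rm odd}$, exactly as the definition of $H^{dy}$ in \eqref{eq:hdy} prescribes. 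The restriction to odd generations comes from the fact that the starting intervals are grandchildren of stopping intervals.
\item \emph{Diagonal pairs and same-generation pairs inside one copy.} These cancel by antisymmetry of $H$ and by the symmetry $c_2=0$ established above.
\item \emph{The top level.} Here $\one_{I_{0,+}}$ and $\one_{I_{0,-}}$ are not periodized at the first step in the same way. The pairing of the averages on the two halves of $I_0$ gives the term $c_0[\cdots]$; its antisymmetric form follows from skew-adjointness of $H$.
\end{enumerate}

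Third, I control the errors from the exceptional intervals. By Lemma~\ref{lem:periodization}, $\mathcal QP^{\vec N}\mathbf f$ differs from $\mathcal P^{\vec N}\mathbf f$ only on a set of measure at most $\sum_k2^{1-N_k}$. There the quasi-periodized functions are bounded by $\|\mathbf f\|_\infty$. Together with the $L^2$ boundedness of $H$, this gives an error of order $\|\mathbf f\|_\infty\|\mathbf g\|_\infty|E_{\vec N}|^{1/2}\to0$.

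Choosing $N^{(j)}_k\to\infty$ along a diagonal sequence over the finitely many relevant orders $k$ then yields the limit \eqref{eq:transfer-identity}.

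The main obstacle is step (b) combined with the boundary layers. The contributions of neighbouring stopping intervals must be summed uniformly, and one must verify that the resulting limit is exactly $H^{\mathbb T}$ with the normalization $\cot(\pi(s-t))$, rather than being merely bounded. My first attempt would be to write the sum over translates via the identity $\sum_{j\in\Z}(x-j)^{-1}=\pi\cot(\pi x)$, applied after rescaling each stopping interval to unit length. The missing exceptional copies would be handled as an $O(2^{-N_k})$ tail.
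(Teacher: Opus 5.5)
Your outline is sound and follows the same route as the paper, which does not prove this lemma itself but imports it from \cite[Lemma~7.9]{domelevo-petermichl-treil-volberg-2024}. You reconstruct that argument with the right ingredients: translate sums $\frac1\pi\sum_j(x-j)^{-1}=\cot(\pi x)$ producing $H^{\mathbb T}$-pairings on $\mathrm{span}\{h_J,h_{J_\pm}\}$ for $J\in\mathcal D_{\rm odd}$, decoupling of different orders because mean-zero periodizations tend weakly to zero in $L^2$, a negligible exceptional set, and the unperiodized top level giving $c_0$.

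One bookkeeping correction is needed. The same-copy term $j=0$ must stay inside the translate sum in (b), because $c_1,c_2$ are full periodic pairings. A single copy gives $(Hh_{R_+},h_{R_-})=\frac{2}{\pi}\log\frac{27}{32}\neq0$, and the pairs $(h_R,h_{R_\pm})$ within one copy are otherwise missing from your classification. So (c) should only record $(H\phi,\phi)=0$ and that the \emph{total} same-generation contribution tends to $c_2=0$.
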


\subsection{Proof of Theorem~\ref{thm:main-1}}

The case \(p=2\) is already contained in
\cite{domelevo-petermichl-treil-volberg-2024}. Indeed, they
constructed matrix \(A_2\) weights satisfying
\[
\|H\|_{L^2(W)\to L^2(W)}
\gtrsim[W]_{A_2}^{3/2},
\]
which is exactly the bound asserted in Theorem~\ref{thm:main-1} for
\(p=2\), since
\[
1+\frac1{2(2-1)}=\frac32.
\]
Thus it remains to prove the theorem for \(p\ne2\).

\begin{lemma}[Bilinear lower bound]\label{lem:pairing-form}
	Let \(p\ne2\), and choose the finite dyadic step weight \(W\)
	constructed in the proof of Theorem~\ref{thm:main} for this \(p\)
	and \(Q\). There exist finite dyadic step functions
	\(\mathbf f,\mathbf g\) on \(I_0\) such that
	\[
	\|\mathbf f\|_{L^p(W)}
	=\|\mathbf g\|_{L^q(W^{-q/p})}
	=1
	\]
	and
	\[
	\bigl|((\mathbb S-\mathbb S^*)\mathbf f,\mathbf g)_{L^2(I_0)}\bigr|
	\ge c_pQ^{1+1/[p(p-1)]}.
	\]
	Consequently, for the same pair,
	\[
	\bigl|(H^{dy}\mathbf f,\mathbf g)_{L^2(I_0)}\bigr|
	\ge c_pQ^{1+1/[p(p-1)]}.
	\]
\end{lemma}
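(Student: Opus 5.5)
The plan is to read the required pair directly off the proof of Theorem~\ref{thm:main}, treating the two ranges of \(p\) separately, and then pass to \(H^{dy}\) through \eqref{eq:hdy}.

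\emph{Case \(p>2\).} Take \(W=W_N\) and the normalized pair from \eqref{eq:finite-pair-high}:
\[
\mathbf f=\varphi={\bf f}_N/\|{\bf f}_N\|_{L^p(W_N)},
\qquad
\mathbf g=\psi={\bf g}_N/\|{\bf g}_N\|_{L^q(W_N^{-q/p})}.
\]
Both are finite dyadic step functions on \(I_0\). They are nonzero because \(\langle{\bf f}_N\rangle_{I_0}={\bf a}_{I_0}\) and \(A_N>0\). The signed pairing in that proof gives
\[
\bigl|((\mathbb S-\mathbb S^*){\bf f}_N,{\bf g}_N)\bigr|\ge c_pA_N .
\]
Using \(\|{\bf g}_N\|_{L^q(W_N^{-q/p})}=A_N^{1/q}\), we get
\[
\bigl|((\mathbb S-\mathbb S^*)\varphi,\psi)\bigr|\ge c_p\,\frac{A_N^{1/p}}{\|{\bf f}_N\|_{L^p(W_N)}}.
\]
Lemma~\ref{choice} gives \(A_N\asymp_p x_{a,0}Q^p\), and \eqref{eq:input-norm-bound} together with \(\rho^{N-1}\le Q^{-10p}\) gives \(\|{\bf f}_N\|^p\lesssim_p x_{a,0}Q^{-1/(p-1)}\). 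Substituting these, the factor \(x_{a,0}\) cancels and the ratio is \(\gtrsim_p Q^{1+1/[p(p-1)]}\).

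\emph{Case \(1<p<2\).} Start from the exponent-\(q\) pair \((\varphi,\psi)\) with weight \(U\), and set \(W=U^{-(p-1)}\), exactly as in the low-exponent part of that proof. Take \(\mathbf f=\psi\) and \(\mathbf g=\varphi\).

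\emph{Norms.} Since \(U^{-p/q}=W\), we have \(\|\psi\|_{L^p(W)}=1\). For \(\mathbf g\) we need \(W^{-q/p}=U\), which holds because \(W^{-q/p}=U^{(p-1)q/p}=U\). Hence \(\|\varphi\|_{L^q(W^{-q/p})}=\|\varphi\|_{L^q(U)}=1\).

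\emph{Pairing.} The finite-sum skew-adjointness \((Tf,g)=-(f,Tg)\) gives
\[
|((\mathbb S-\mathbb S^*)\psi,\varphi)|=|((\mathbb S-\mathbb S^*)\varphi,\psi)|\ge c_pQ^{\alpha(q)/(p-1)}.
\]
The exponent identity \(\alpha(q)/(p-1)=\alpha(p)\) already checked there turns this into \(c_pQ^{1+1/[p(p-1)]}\).

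\emph{Passage to \(H^{dy}\).} By \eqref{eq:hdy}, \(H^{dy}=c_1(\mathbb S-\mathbb S^*)\) with \(c_1<0\) an absolute constant, since \(c_2=0\). The second inequality therefore follows after replacing \(c_p\) by \(|c_1|c_p\).

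The only delicate point is the norm bookkeeping under the exchange \(W=U^{-(p-1)}\): one must verify that \(W^{-q/p}=U\) and \(U^{-p/q}=W\), so that the dual and primal roles swap exactly with no stray constants. Everything else is a restatement of estimates already proved.
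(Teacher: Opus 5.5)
Your proposal is correct and follows the paper's proof: for $p>2$ you take the normalized pair $(\varphi,\psi)$ from the proof of Theorem~\ref{thm:main}, and for $1<p<2$ you exchange the exponent-$q$ pair under $W=U^{-(p-1)}$. You then pass to $H^{dy}$ through $H^{dy}=c_1(\mathbb S-\mathbb S^*)$ with $c_1\neq0$, and your only addition is to write out the bookkeeping the paper leaves implicit ($\|{\bf g}_N\|_{L^q(W_N^{-q/p})}=A_N^{1/q}$, the cancellation of $x_{a,0}$, and $W^{-q/p}=U$, $U^{-p/q}=W$).
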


\begin{proof}
	For \(p>2\), take the normalized finite pair constructed in the
	proof of Theorem~\ref{thm:main}. For \(1<p<2\), take the
	corresponding pair obtained by duality in the proof of
	Theorem~\ref{thm:main}. Since \(c_1\ne0\), the second inequality
	follows from \eqref{eq:hdy}.
\end{proof}

\begin{lemma}[Boundary term]\label{lem:boundary-term}
	Let \(W\) be a finite positive definite step weight on \(I_0\)
	such that \([W]_{A_p}^{\mathrm{dy}}<\infty\). If
	\[
	\|\mathbf f\|_{L^p(W)}
	=\|\mathbf g\|_{L^q(W^{-q/p})}
	=1,
	\]
	then
	\[
	\left|c_0\Bigl[
	(\langle\mathbf f\rangle_{I_{0,+}},
	\langle\mathbf g\rangle_{I_{0,-}})
	-(\langle\mathbf f\rangle_{I_{0,-}},
	\langle\mathbf g\rangle_{I_{0,+}})
	\Bigr]\right|
	\le8|c_0|\bigl([W]_{A_p}^{\mathrm{dy}}\bigr)^{1/p}.
	\]
\end{lemma}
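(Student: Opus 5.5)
Since \(c_0\) is a fixed number, it suffices to bound each Euclidean pairing \((\langle\mathbf f\rangle_{J},\langle\mathbf g\rangle_{J'})\), with \(J,J'\in\{I_{0,+},I_{0,-}\}\), by \(4([W]_{A_p}^{\mathrm{dy}})^{1/p}\). Summing the two pairings then gives the constant \(8|c_0|\). The plan is to insert the weight between the two averages and apply H\"older's inequality twice. This is the standard way to show that \(A_p\) controls averaging operators.

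For \(x\in J\) and \(y\in J'\), write
\[
(\mathbf f(y),\mathbf g(x))=\bigl(W(y)^{1/p}\mathbf f(y),\,W(y)^{-1/p}W(x)^{1/p}\,W(x)^{-1/p}\mathbf g(x)\bigr).
\]
Then \((\langle\mathbf f\rangle_{J},\langle\mathbf g\rangle_{J'})\) is a double average over \(y\in J\) and \(x\in J'\). Its absolute value is at most
\[
\frac1{|J||J'|}\iint |W(y)^{1/p}\mathbf f(y)|\,\|W(y)^{-1/p}W(x)^{1/p}\|_{\op}\,|W(x)^{-1/p}\mathbf g(x)|\,dy\,dx .
\]
Every point of \(J\cup J'\) lies in \(I_0\), and \(|J|=|J'|=|I_0|/2=1/2\). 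Hence the normalized double integral is at most \(4\) times the corresponding double average over \(I_0\times I_0\).

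Next apply H\"older's inequality in \(y\) with exponents \(p,q\). This gives
\[
\int_{I_0}|W(y)^{1/p}\mathbf f(y)|\,\|W(y)^{-1/p}W(x)^{1/p}\|_{\op}\,dy\le \|\mathbf f\|_{L^p(W)}\Bigl(\int_{I_0}\|W(x)^{1/p}W(y)^{-1/p}\|_{\op}^q\,dy\Bigr)^{1/q}.
\]
Here we use that the operator norm is invariant under taking adjoints, and that both matrices are self-adjoint. Then apply H\"older's inequality in \(x\) with exponents \(p,q\), using \(\|\mathbf g\|_{L^q(W^{-q/p})}=\bigl(\int|W^{-1/p}\mathbf g|^q\bigr)^{1/q}=1\). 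The remaining factor is
\[
\Bigl(\int_{I_0}\Bigl(\int_{I_0}\|W(x)^{1/p}W(y)^{-1/p}\|_{\op}^q\,dy\Bigr)^{p/q}dx\Bigr)^{1/p}=\Phi_{p,I_0}(W)^{1/p}\le([W]_{A_p}^{\mathrm{dy}})^{1/p},
\]
because \(|I_0|=1\) and \(I_0\in\mathcal D\).

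Putting the steps together, each pairing is bounded by \(4([W]_{A_p}^{\mathrm{dy}})^{1/p}\), so the bracket is at most \(8([W]_{A_p}^{\mathrm{dy}})^{1/p}\). There is no real obstacle. The only step requiring care is the bookkeeping: the order of \(x\) and \(y\) must match the form \(\|W(x)^{1/p}W(y)^{-1/p}\|\) with the \(x\)-integral on the outside, which is exactly what the adjoint invariance provides. One must also track the factor \(4\) that comes from restricting to the half-intervals.
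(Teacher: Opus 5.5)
Your proof is correct and is essentially the paper's argument: the paper packages your H\"older step in \(y\) as the bound \(\|\mathbb E_0u\|_{L^p(W)}\le([W]_{A_p}^{\mathrm{dy}})^{1/p}\|u\|_{L^p(W)}\) for \(\mathbb E_0u=\one_{I_0}\langle u\rangle_{I_0}\) with \(u=\one_{I_{0,+}}\mathbf f\), and replaces your H\"older step in \(x\) by weighted duality against \(v=\one_{I_{0,-}}\mathbf g\), with the same factor \(4\) coming from the half-intervals. There is one harmless labeling slip: you first write \(x\in J\), \(y\in J'\), but your computation correctly uses \(y\in J\), \(x\in J'\).
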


\begin{proof}
	Let \(\mathbb E_0u=\one_{I_0}\langle u\rangle_{I_0}\).
	Hölder's inequality and \(|I_0|=1\) give
	\[
	\|\mathbb E_0u\|_{L^p(W)}
	\le\bigl([W]_{A_p}^{\mathrm{dy}}\bigr)^{1/p}
	\|u\|_{L^p(W)}.
	\]
	For \(u=\one_{I_{0,+}}\mathbf f\) and
	\(v=\one_{I_{0,-}}\mathbf g\), we have
	\[
	(\langle\mathbf f\rangle_{I_{0,+}},
	\langle\mathbf g\rangle_{I_{0,-}})
	=4\int_{I_0}\langle\mathbb E_0u,v\rangle\,dx.
	\]
	Weighted duality bounds this by
	\(4([W]_{A_p}^{\mathrm{dy}})^{1/p}\). Interchanging the two halves
	gives the same bound for the other term. Summing proves the claim.
\end{proof}

\begin{proof}[Proof of Theorem~\ref{thm:main-1} for \(p\ne2\)]
	Fix \(p\in(1,2)\cup(2,\infty)\) and take \(Q\) sufficiently large.
	Let \(W\) be the finite dyadic step weight constructed in the proof
	of Theorem~\ref{thm:main}, with
	\[
	[W]_{A_p}^{\mathrm{dy}}\le C_pQ,
	\]
	and let \(\mathbf f,\mathbf g\) be the normalized pair from
	Lemma~\ref{lem:pairing-form}.
	
	The main term in \eqref{eq:transfer-identity} has size at least
	\(c_pQ^{1+1/[p(p-1)]}\), whereas the boundary term is bounded by
	\(C_pQ^{1/p}\). Since
	\[
	1+\frac1{p(p-1)}>\frac1p,
	\]
	the boundary term is absorbed into the main lower bound for
	sufficiently large \(Q\). Along the sequence of frequencies in
	Lemma~\ref{lem:transfer}, we therefore have, for sufficiently large
	frequencies,
	\[
	\bigl|(H\widetilde{\mathbf f},\widetilde{\mathbf g})_{L^2(\R)}\bigr|
	\ge c_pQ^{1+1/[p(p-1)]},
	\]
	where
	\[
	\widetilde{\mathbf f}=\mathcal QP^{\vec N}\mathbf f,
	\qquad
	\widetilde{\mathbf g}=\mathcal QP^{\vec N}\mathbf g.
	\]
	
	Using the same frequencies, construct the remodeled weight
	\(\widetilde W\) by the distributional remodeling of
	Subsection~5.2, and extend it \(1\)-periodically. Then
	\[
	[\widetilde W]_{A_p}
	\le16^p[W]_{A_p}^{\mathrm{dy}}
	\le C_pQ.
	\]
	By Lemma~\ref{lem:norm-tracking},
	\[
	\|\widetilde{\mathbf f}\|_{L^p(\widetilde W)}\le2,
	\qquad
	\|\widetilde{\mathbf g}\|_{L^q(\widetilde W^{-q/p})}\le2
	\]
	for sufficiently large frequencies. Weighted Hölder's inequality
	now gives
	\[
	\frac{\|H\widetilde{\mathbf f}\|_{L^p(\widetilde W)}}
	{\|\widetilde{\mathbf f}\|_{L^p(\widetilde W)}}
	\ge
	\frac{|(H\widetilde{\mathbf f},\widetilde{\mathbf g})|}
	{\|\widetilde{\mathbf f}\|_{L^p(\widetilde W)}
		\|\widetilde{\mathbf g}\|_{L^q(\widetilde W^{-q/p})}}
	\ge c_pQ^{1+1/[p(p-1)]}.
	\]
	The pairing is nonzero, so \(\widetilde{\mathbf f}\ne0\). This
	proves the theorem with the weight \(\widetilde W\).
\end{proof}
\bigskip

\noindent{\bf Acknowledgement.} 
The authors acknowledge the use of AI tools during the exploratory stage
of this project. All mathematical arguments and proofs in the final manuscript were checked and
written by the authors.

\bibliographystyle{amsplain}
\bibliography{JQWX-matrix-weight-2-references}
\end{document}